\documentclass[a4paper,12pt]{amsart} 
\usepackage{etex,soul}
\usepackage{pgf,ulem}
\usepackage{amscd,amsmath,amssymb,amstext,amsfonts,amsthm,latexsym}
\usepackage{verbatim}
\usepackage[english]{babel}
\usepackage[utf8]{inputenc}
\usepackage{graphicx}
\usepackage{mathrsfs,ulem}

\usepackage[all]{xy}
\usepackage{supertabular,multirow}
\usepackage{fancyhdr}
\usepackage{url} 
\usepackage{hyperref}


\newcommand\sE{{\mathcal E}}
\newcommand\sA{{\mathcal A}}

\newcommand\sL{{\mathcal L}}

\newcommand\sX{{\mathcal X}}

\newcommand\sH{{\mathcal H}}


\newcommand\la{\lambda}

\newcommand\Ga{\Gamma}
\newcommand\De{\Delta}

\newcommand\de{\delta}

\DeclareMathOperator{\Pic}{Pic}

\DeclareMathOperator{\Ext}{Ext}
\DeclareMathOperator{\Hom}{Hom}
\DeclareMathOperator{\Alb}{Alb}
\DeclareMathOperator{\divi}{div}

\newcommand{\CC}{\ensuremath{\mathbb{C}}}

\newcommand{\ZZ}{\ensuremath{\mathbb{Z}}}
\newcommand{\QQ}{\ensuremath{\mathbb{Q}}}

\newcommand{\HH}{\ensuremath{\mathbb{H}}}

\newcommand{\PP}{\ensuremath{\mathbb{P}}}

\newcommand{\ra}{\ensuremath{\rightarrow}}

\def\eea{\end{eqnarray*}}
\def\bea{\begin{eqnarray*}}

\newcommand\dual{\mathrel{\raise3pt\hbox{$\underline{\mathrm{\thinspace d
\thinspace}}$}}}
\newcommand\qe{\ifhmode\unskip\nobreak\fi\quad $\Box$}       

\def\BOX{\hfill\lower.5\baselineskip\hbox{$\Box$}}


\newtheorem*{mtheorem1}{Main Theorem 1}
\newtheorem*{mtheorem2}{Main Theorem 2}

\newtheorem{theorem}{Theorem}[section]
\newtheorem{lemma}[theorem]{Lemma}
\newtheorem{corollary}[theorem]{Corollary}
\newtheorem{proposition}[theorem]{Proposition}

\theoremstyle{remark}
\newtheorem{remark}[theorem]{Remark}

\theoremstyle{definition}
\newtheorem{definition}[theorem]{Definition}

\DeclareMathOperator{\Id}{Id}
\DeclareMathOperator{\Aut}{Aut}

\DeclareMathOperator{\diag}{diag}

\DeclareMathOperator{\rk}{rk}
\DeclareMathOperator{\GL}{GL}
\DeclareMathOperator{\SL}{SL}
\DeclareMathOperator{\im}{im}
\DeclareMathOperator{\Aff}{Aff}
\DeclareMathOperator{\Irr}{Irr}
\DeclareMathOperator{\Heis}{Heis}

\begin{document}

\author[I. Bauer,  F. Catanese, D. Frapporti]{Ingrid Bauer, Fabrizio Catanese, Davide Frapporti }

\title[Burniat type surfaces and BdF varieties]{Generalized Burniat Type surfaces and Bagnera-de Franchis varieties}

\thanks{The present work took mainly place in the realm of the DFG
Forschergruppe 790 ``Classification of algebraic
surfaces and compact complex manifolds''.\\ The second author also acknowledges support
of the ERC-advanced Grant 340258-TADMICAMT}

\date{\today}
\keywords{Surfaces of general type, topology and connected components of moduli spaces, abelian
varieties, finite group actions, Bagnera-de Franchis varieties, generalized Burniat type surfaces} 
\subjclass[2000]{ 14J29, 14J80, 14J15, 14K99}

\makeatletter
\renewcommand\theequation{\thesection.\arabic{equation}}
\@addtoreset{equation}{section}
\makeatother

\begin{abstract}
In this article we construct three new  families of surfaces of general type with $p_g=q =0, K^2=6 $,
and  seven new families of surfaces of general type with $p_g=q =1, K^2=6 $, realizing 10 new fundamental groups. We also show that these families correspond to pairwise distinct irreducible connected components
of the Gieseker moduli space of surfaces of general type.

We  achieve this using two different main ingredients.  First we introduce a new class of surfaces, called generalized Burniat type surfaces,
and   we completely classify them (and the connected components of the moduli space containing them). Second, we introduce the notion of Bagnera-de Franchis varieties:
 these are the free quotients of an Abelian variety by a cyclic  group (not consisting only of translations).
For these   we develop some basic results.

\end{abstract}
\maketitle

{\it Dedicated to the memory of Kunihiko Kodaira with  great admiration.}

\section*{Introduction}
The present paper continues, with  new inputs,  a research developed in a series of articles 
(\cite{bacat},   \cite{bcg}, \cite{BC10burniat2}, \cite{BC11burniat1},  
\cite{keumnaie},  \cite{BC12}, \cite{4names}, \cite{BC13burniat3}, \cite{BC13})
 and dedicated to  the discovery of new surfaces of general type with geometric genus
$p_g = 0$, to their   classification, and to the description of their moduli spaces (see the survey article
\cite{survey} for an account of what is known about surfaces wit $p_g=0$, related conjectures and results).

Indeed, in this article,  we consider  the  more general  case of surfaces of general type with $\chi = 1$, i.e., with $p_g = q$.

In the first part we  focus  again on  the construction method originally due to Burniat
(singular bidouble coverings), but 
in  the reformulation done by Inoue (quotients by Abelian groups of exponent two), presenting it in a  rather general fashion which  shows how topological methods allow to describe explicitly connected components of moduli spaces. 
A first novelty   here  is a refined analysis of pencils of Del Pezzo surfaces admitting
a certain group of symmetries, as we shall  now explain.

In a more  general approach  (cf. \cite{BC13}) we consider quotients (cf. \cite{BC12} for  the case of a free action, treated there  in an even  greater generality), by some group $G$ of the form $(\mathbb{Z}/m)^r$, of varieties $\hat{X}$ contained in a product of curves $\Pi_i C_i$, where each $C_i$  is  a maximal Abelian cover
of the projective line with Galois  group of exponent $m$ and with fixed branch locus.

 In the case $m=2$ there is a connection with the Burniat surfaces:
these  are surfaces of general type with invariants $p_g=0$ and $K^2 = 6,5,4,3,2$, whose birational
models  were constructed by Pol Burniat (cf. \cite{burniat}) in 1966 as singular bidouble covers of the projective plane. Later these surfaces were reconstructed by Inoue (cf. \cite{inoue}) as $G:=(\mathbb{Z}/2\mathbb{Z})^3$-quotients of a ($G$-invariant) hypersurface $\hat{X}$ of multidegree $(2,2,2)$ in a product of three elliptic curves.  

While Inoue writes the (affine) equation of $\hat{X}$ in terms of the uniformizing parameters of the respective elliptic curves using a variant of the Weierstrass' function (a Legendre function), we found it much more useful to write the elliptic curves as the complete intersection of two diagonal quadrics in three space.

 This algebraic and  systematic approach  allows us, also with the aid  of computer algebra,  to find all the possible such constructions.
  
Our  situation is as follows: we consider first  the following diagram of quotient morphisms:

\begin{equation}\label{burniatdiagr}
\xymatrix{
E_1 \times E_2 \times E_3 \ar[dd]_{\mathcal{H}':=(\mathbb{Z}/2)^3}^{\pi'}& E_1: = \{x_1^2 + x_2^2 + x_3^2 = 0, \  x_0^2 = a_1x_1^2 + a_2x_2^2\}\\
&E_2: = \{  u_1^2 + u_2^2 + u_3^2 = 0, \ u_0^2 = b_1u_1^2 + b_2u_2^2 \}\\
P_1:=\mathbb{P}^1 \times \mathbb{P}^1 \times \mathbb{P}^1\ar[dd]^{\pi }_{\mathcal{H}:=((\mathbb{Z}/2)^2)^3} & E_3: = \{z_1^2 +z_2^2 +z_3^2 = 0,  \ z_0^2 = c_1 z_1^2 + c_2z_2^2  \}\\
&\\
P_2:=\mathbb{P}^1 \times \mathbb{P}^1 \times \mathbb{P}^1
}
\end{equation}

\noindent 
where the map $\pi'$ is given by ``forgetting'' the variables $x_0,\, u_0,\, z_0$,\\
 the map $\pi$ is given by setting $x_j^2=y_j$, $u_j^2=v_j$, $z_j^2=w_j$,  $j=1,2,3$, and where we view
$P_2\subset (\mathbb P^2)^3$ as the subvariety defined  by the equations
$$y_1+y_2+y_3=0\, ,\, v_1+v_2+v_3=0\, ,\, w_1+w_2+w_3=0\,.$$
The  Galois group for $\pi \circ  \pi'$  is  rather large, it is indeed $(\ZZ /2 \ZZ )^9 \cong \{ \pm 1\}^9$.

We consider then $P_1$ with homogeneous coordinates $((s_1:t_1),(s_2:t_2),(s_3:t_3))$ and for each ${\lambda}:=(\lambda_1, \ldots, \lambda_8)\in \mathbb C^8\setminus \{0\}$ we consider the hypersurface $Y_{\lambda}$ of multidegree $(1,1,1)$ in $P_1$ given by the multihomogeneous equation
\begin{eqnarray}
\lambda_1 s_1s_2s_3+\lambda_2 s_1s_2t_3+\lambda_3 s_1t_2s_3+\lambda_4 s_1t_2t_3+\\
\lambda_5 t_1s_2s_3+\lambda_6 t_1s_2t_3+\lambda_7 t_1t_2s_3+\lambda_8 t_1t_2t_3=0.\nonumber
\end{eqnarray}

We then classify the subgroups $H_1$ (resp. $H_0$) of $\mathcal{H} \cong ((\mathbb{Z}/ 2 \mathbb{Z})^2)^3$
which are  isomorphic to $(\mathbb{Z}/ 2 \mathbb{Z})^2$ (resp. to $(\mathbb{Z}/ 2 \mathbb{Z})^3$) 
and satisfy the property  that there is an irreducible Del Pezzo surface $Y_{\lambda}$ invariant under  $H_1$ 
(resp. $H_0$).

We consider then  $\hat{X}_{\lambda}:= (\pi')^{-1}(Y_{\lambda})$, which  is then invariant under the subgroup $\mathcal G_1 \cong (\mathbb{Z} /2 \mathbb{Z})^5 \subset (\mathbb{Z} /2 \mathbb{Z})^9$ 
inverse image of $H_1$ (resp. $\mathcal G_0 \cong (\mathbb{Z} /2 \mathbb{Z})^6$). We  determine  in this article
all the subgroups $G \cong (\mathbb{Z}/2\mathbb{Z})^3 \subset \mathcal G_1$ (resp. $\mathcal G_0$), having  the property  that $G$ acts freely on 
$\hat{X}_{\lambda}$.

This leads us to introduce a class of surfaces of general type,  described by  the following 
\begin{definition}
Let $G \cong (\mathbb{Z} / 2 \mathbb{Z})^3 \leq \mathcal G_1$  (resp. $\mathcal G_0$) be such that $G$ acts freely on $\hat{X}_{\lambda}$. Then the minimal resolution $S$ of $X_{\lambda}:=\hat{X}_{\lambda} /G$   is called a {\it generalized Burniat type surface}. 
\end{definition}

With the help of the computer algebra system MAGMA (cf. \cite{MAGMA})  we can classify all generalized Burniat type surfaces 
(=GBT surfaces for short) and can prove the following (see Proposition \ref{onefam} and Theorem \ref{fundgroup})
\begin{mtheorem1} \
\begin{enumerate} 
\item There are 16 irreducible families of GBT surfaces. These  have $K^2 = 6$ and $0 \leq p_g=q \leq 3$. The families are  listed 
in Tables \ref{q0} - \ref{q3}, and the dimension of the irreducible family is  4 in cases $\mathcal S_1$ and 
$\mathcal S_2$, and   3 otherwise.
\item  Among the 16 families of generalized Burniat type surfaces four have $p_g=q=0$ (Table \ref{q0}), 
eight have $p_g=q=1$ (Table \ref{q1}), three have $p_g=q=2$ (Table \ref{q2}) and one has $p_g=q=3$ (Table \ref{q3}).
Family $\mathcal S_2$ is the family of primary Burniat surfaces (the one due to Pol Burniat).
\item The fundamental groups of these families are pairwise non isomorphic, except that $\pi_1(S_{11})\cong\pi_1(S_{12})$
and $\pi_1(S_{14})\cong\pi_1(S_{15})$, where $S_j$ is in the family $\mathcal S_j$
\item
The surfaces in the families $\mathcal S_1$, $\mathcal S_3$ and $\mathcal S_4$  realize new (i.e., up to now unknown) fundamental groups of surfaces with $p_g=0, K^2 = 6$,
while the surfaces in the families $\mathcal S_5$-$\mathcal S_{11}$ realize new  fundamental groups for surfaces with $p_g=q=1, K^2 = 6$.

\item
In cases $\mathcal S_1$-$\mathcal S_{10}$, each family of  generalized Burniat type surfaces maps with a generically finite morphism onto
an irreducible connected component of the Gieseker moduli space of surfaces of general type. 
\end{enumerate}
\end{mtheorem1}

We  use indeed  the techniques developed in \cite{BC12} to determine the irreducible connected components of the moduli space
containing the generalized Burniat type surfaces. We do not spell out all the details in the cases 
$\mathcal S_{13}$-$\mathcal S_{16}$,
since the surfaces that we obtain in this way are not new and have already been classified by other authors.

In cases $\mathcal S_1$-$\mathcal S_{10}$ we can apply the general results of  \cite{BC12}  concerning  classical diagonal  Inoue type varieties in order to describe the
connected components of the moduli space containing the  generalized Burniat type surfaces.

 We then show that it is no coincidence that the fundamental groups of  the families $\mathcal S_{11}$ and 
 $\mathcal S_{12}$ in Table \ref{q1} are isomorphic.
These families of surfaces are shown to  be contained in a larger irreducible family, which corresponds to another realization
as Inoue type varieties. This is done via the concept of a Bagnera-de Franchis variety, which we define simply as
the quotient of an Abelian variety $A$ by a nontrivial  finite cyclic group $G$ acting freely on $A$ and not containing any translation.

We obtain in this way  the following theorem

\begin{mtheorem2}

Define  a Sicilian surface to be any
 minimal surface of general type $S$ such that 
\begin{itemize}
\item
$S$ has invariants $K_S^2 = 6$, $p_g(S) =q(S) = 1$,
\item there exists an unramified double cover
$ \hat{S} \rightarrow S$ with $ q (\hat{S}) = 3$,  
\item the Albanese morphism $ \hat{\alpha} \colon  \hat{S}  \rightarrow A = \Alb(\hat{S})$ is birational onto its image $Z$,
a divisor in $A$ with  $ Z^3 = 12$.
\end{itemize}

\noindent
1) Then the canonical model of $\hat{S}$ is isomorphic to $Z$, and the canonical model of $S$ is isomorphic to $Y = Z / (\mathbb{Z}/2 \mathbb{Z})$. 
$Y$  is a divisor in a Bagnera-de Franchis threefold $ X: = A/ G$, where $A = (A_1 \times A_2) / T$, $ G \cong T \cong \mathbb{Z}/2 \mathbb{Z}$,
and where the action is as in (\ref{BCF}).

\noindent
2) Sicilian  surfaces exist, have an irreducible four dimensional moduli space, and their Albanese map $\alpha \colon S \rightarrow A_1 = A_1/ A_1[2]$ has 
general fibre a non hyperelliptic curve of genus $g=3$.

\noindent
3) A GBT surface is a Sicilian surface if and only if it is  in the family $\mathcal S_{11}$ or 
$\mathcal S_{12}$.

\noindent
4) Any surface homotopically equivalent to a Sicilian surface is a Sicilian surface.

\end{mtheorem2}

Indeed, one can replace the above assumption of homotopy equivalence by a weaker one, see Corollary \ref{he}.

In Section \ref{bdf} we discuss the basic results of the theory of Bagnera-de Franchis varieties, and show how
to describe concretely the effective  divisors on them, thus solving in a special  case one of the main technical difficulties in the
general theory of Inoue type varieties, developed in \cite{BC12}.


\section{Inoue's description of Burniat surfaces}\label{Inouedescription}

We briefly recall the description of (primary) \textit{Burniat surfaces} (those  constructed  by P. Burniat in  \cite{burniat}) given by Inoue in \cite{inoue}. 

Inoue considers, for $j\in \{1,2,3\}$,  a complex elliptic curve $E_j:=\mathbb{C}/\langle 1,\tau_j\rangle$ with 
  uniformizing parameter $z_j$, and then   the following three commuting  involutions
on the Abelian variety $A^0:=E_1\times E_2 \times E_3$:

$$\begin{array}{rcc}
g_1(z_1,z_2,z_3)= (-z_1+\frac{1}{2},z_2+\frac{1}{2}, z_3)\,,	 	\\
g_2(z_1,z_2,z_3)= (z_1, -z_2+\frac{1}{2}, z_3+\frac{1}{2})\,, \\
g_3(z_1,z_2,z_3)= (z_1+\frac{1}{2},z_2, -z_3+\frac{1}{2})\,.
\end{array}$$

\noindent 
Note that $G:=\langle g_1, g_2,g_3\rangle\cong (\mathbb Z/ 2\mathbb Z)^3$.

\noindent
Let   $\mathcal{L}_j$, 
for $j=1,2,3$, be a Legendre function for $E_j$: $\mathcal{L}_j\colon E_j\rightarrow \mathbb{P}^1$, a meromorphic function  
which makes $E_j$  a double cover of $\PP^1$ branched over the  four distinct points:
$\pm 1, \pm a_j\in \mathbb{P}^1\setminus \{0, \infty\}$. 

\noindent
It is well known that the following statements hold (see \cite[Lemma 3-2]{inoue} and \cite[Section 1]{BC11burniat1} for an algebraic treatment):
\begin{itemize}
\item $\mathcal{L}_j(0)=1$, $\mathcal{L}_j(\frac{1}{2})=-1$,  $\mathcal{L}_j(\frac{\tau_j}{2})=a_j$,
 $\mathcal{L}_j(\frac{\tau_j+1}{2})=-a_j$;
\item set $b_j:=\mathcal{L}_j(\frac{\tau_j}{4})$: then $b_j^2=a_j$;
\item $\frac{\mathrm d\mathcal L_j}{\mathrm d z_j}(z_j)=0$ if and only if 
$z_j\in\{0,\frac 12, \frac {\tau_j}2, \frac {\tau_j+1}2\}$
 since these are the ramification points of $\mathcal{L}_j$.
\end{itemize}

\noindent
Moreover,
\begin{eqnarray}
&\mathcal{L}_j(z_j)=\mathcal{L}_j(z_j+1)=\mathcal{L}_j(z_j+\tau_j)=\mathcal{L}_j(-z_j)=
-\mathcal{L}_j\bigg(z_j+\dfrac 12\bigg),\nonumber \\
&\mathcal{L}_j\bigg(z_j+\dfrac{\tau_j}2\bigg)= \dfrac{a_j} {\mathcal L_j(z_j)}\,.\nonumber
\end{eqnarray}

For $c \in \mathbb{C}\setminus\{0\}$, Inoue considers the surface 
$$\hat X_c:=\{[z_1,z_2,z_3]\in A^0 \mid \mathcal{L}_1(z_1)\mathcal{L}_2(z_2)\mathcal{L}_3(z_3)=c\}\,$$
inside the Abelian variety $A^0$. Then he shows:
\begin{itemize}

\item $\hat X_c$ is a hypersurface in $A^0$ of multidegree $(2,2,2)$ and is invariant under the action of $G$, $\forall c$.

\item For a general choice of $c$, $\hat X_c$ is smooth, and $G$ acts freely on $\hat X_c$,  
whence $X_c:=\hat X_c/ G$ is a smooth
minimal surface of general type with $p_g=0$ and $K^2=6$.

\item For special values of $c$, the hypersurface $\hat X_c$ has $4,8,12,16$ nodes,
which are isolated fixed points of $G$; in these cases the minimal resolution of singularities of 
$X_c:=\hat X_c/ G$ is a minimal surface of general type with  $p_g=0$ and $K^2=5,4,3,2$.

\end{itemize}

\begin{remark} The minimal resolution of singularities $S_c$ of $X_c$ is called a  {\it Burniat surface}. If $X_c$ is already smooth, or equivalently if $K_{S_c}^2 = 6$, then $S_c$ is called a {\it primary} Burniat surface. For an extensive treatment of Burniat surfaces and their moduli spaces we refer to \cite{BC11burniat1}, \cite{BC10burniat2}, \cite{BC13burniat3}.
\end{remark}

\section{ Intersection of diagonal quadrics and {$(\mathbb Z/ 2\mathbb Z)^n$}-actions}\label{intersection}

As already in \cite[Section 3]{BC13}, we exhibit  $A^0$ as a Galois covering of $(\PP^1)^3$
with Galois group  $ \cong (\ZZ/2)^9$.
This is done  via the following diagram.

 The main purpose of this section is to find irreducible Del Pezzo surfaces in $P_1$ which are left  invariant under 
large subgroups of the group $\sH \cong  (\ZZ/2)^6$.

\begin{equation}\label{diag1}
\xymatrix{
E_1\times E_2\times E_3 \ar[dd]^{\pi'}_{\mathcal H':=(\mathbb Z/ 2\mathbb Z)^3} & E_1:=\{ x_1^2+x_2^2+x_3^2=0,
 \quad x_0^2=a_1x_1^2+a_2x_2^2\}\\
& E_2:= \{u_1^2+u_2^2+u_3^2=0, \quad u_0^2=b_1u_1^2+b_2u_2^2\} \\
P_1:=\mathbb P^1\times\mathbb P^1\times\mathbb P^1    \ar[dd]^{\pi}_{\mathcal H:=((\mathbb Z/ 2\mathbb Z)^2)^3}&
E_3: = \{z_1^2+z_2^2+z_3^2=0, \quad z_0^2=c_1z_1^2+c_2z_2^2\}\\
&\\
P_2:=\mathbb P^1\times\mathbb P^1\times\mathbb P^1\\
}
\end{equation}

\noindent 
The map $\pi'$ is given by ``forgetting'' the variables $x_0,\, u_0,\, z_0$,
whereas the map $\pi$ is given by setting $x_j^2=y_j$, $u_j^2=v_j$, $z_j^2=w_j$,  $j=1,2,3$, and viewing
$P_2\subset (\mathbb P^2)^3$ as the subvariety defined  by the equations
$$y_1+y_2+y_3=0\, ,\, v_1+v_2+v_3=0\, ,\, w_1+w_2+w_3=0\,.$$
The  Galois group for $\pi \circ  \pi'$,  is  $(\ZZ /2 \ZZ )^9 \cong \{ \pm 1\}^9$.

Restricting  diagram (\ref{diag1}) to one (w.l.o.g. the first) factor we get:

\begin{equation}\label{diag2}
\xymatrix{ 
E_1=E \ar[d]_{\mathbb Z/ 2\mathbb Z}&& \\
\mathbb P^1\ar[d]_{(\mathbb Z/ 2\mathbb Z)^2} &= &\{x_1^2+x_2^2+x_3^2=0\}=:C\subset \mathbb P^2\\
\mathbb P^1&= &\{y_1+y_2+y_3=0\}\subset \mathbb P^2
}
\end{equation}

\noindent Since
$$x_1^2+x_2^2+x_3^2=0 \Longleftrightarrow \det \left ( \begin{array}{cc}
x_1+ix_2 & -x_3\\
x_3 & x_1-i x_2
\end{array}\right)=0\,, $$
 we get an isomorphism of $C$ with $\PP^1$:
$$(s:t)= (x_1+ix_2: x_3)= (-x_3: x_1-i x_2)\,$$
and a parametrization of $C$
$$ (x_1 : x_2 : x_3) = (i (s^2 - t^2) :(s^2  + t^2)  : 2 i  st).$$

\noindent 
With this parametrization, we can rewrite the action of $(\mathbb Z/ 2\mathbb Z)^2$ on $\mathbb P^1$ in the following way
 (on the left hand side we use the convenient notation by which all variables not mentioned in a transformation are left unchanged by the transformation):
\begin{itemize}
\item[a)] $x_1\mapsto -x_1$ corresponds to $A_1\colon (s:t)\mapsto(t:s)$;
\item[b)] $x_2\mapsto -x_2$ corresponds to $A_{-1}\colon (s:t)\mapsto(-t:s)$;
\item[c)] $x_3\mapsto -x_3$ corresponds to $B\colon (s:t)\mapsto(s:-t)$. 
\end{itemize}

\noindent  The fixed points of these three involutions are respectively:
\begin{itemize}
\item[a)] $s=\pm t$, equivalently, $ x_1=x_3\pm i x_2=0$;
\item[b)] $s=\pm i t$,  equivalently, $ x_2=x_3\pm i x_1=0$;
\item[c)] $st=0$,  equivalently, $ x_3=x_1\pm i x_2=0$.
\end{itemize}

\

For each ${\lambda}:=(\lambda_1, \ldots, \lambda_8)\in \mathbb C^8\setminus \{0\}$ we consider the hypersurface $Y_{\lambda}$ of multidegree $(1,1,1)$ in $P_1 = \mathbb P^1_{(s_1:t_1)}\times\mathbb P^1_{(s_2:t_2)}\times\mathbb P^1_{(s_3:t_3)} $ given by the multihomogeneous equation
\begin{eqnarray}
\lambda_1 s_1s_2s_3+\lambda_2 s_1s_2t_3+\lambda_3 s_1t_2s_3+\lambda_4 s_1t_2t_3+\\
\lambda_5 t_1s_2s_3+\lambda_6 t_1s_2t_3+\lambda_7 t_1t_2s_3+\lambda_8 t_1t_2t_3=0.\nonumber
\end{eqnarray}

\noindent
Clearly, $Y_{\lambda}$  is a Del Pezzo surface of degree 6.
\noindent
Since we shall be looking for Del Pezzo surfaces  $Y_{\lambda}$ which are left  invariant by certain subgroups of $\mathcal H$
(the Galois group of $\pi$),  we first need to establish conditions ensuring that  the hypersurface $Y_{\lambda}$ is left  invariant by an element $h=(h_1,h_2,h_3)\in \mathcal H$. 

This is done in the next lemma, which is easy to verify and which takes care of the normal form of a transformation $(h_1,h_2,h_3) \in \sH$, taken  up
to a permutation of the three factors
(here $\Id$ is the identity map of $\mathbb P^1$, 
while $A_1$, $A_{-1}$ and $B$ are  the maps
defined above).

\begin{lemma}
Let $h=(h_1,h_2,h_3)\in \mathcal H \setminus \{\Id\}$ be one of the transformations listed in the first column of Table \ref{tab1}. 

Then $Y_{\lambda}$ is $h$-invariant if  and only if the coefficients $\la_j$ satisfy the linear conditions 
 listed in Table \ref{tab1}. 

\end{lemma}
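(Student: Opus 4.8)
The plan is to exploit the explicit tri-linear structure of $Y_{\lambda}$ together with the fact that each generator of $\sH$ acts on a single $\PP^1$-factor by one of the three involutions $A_1$, $A_{-1}$, $B$ described above. Since $Y_\lambda$ is cut out by a single multihomogeneous form $F_\lambda$ of multidegree $(1,1,1)$, the space of such forms is the eight-dimensional vector space $V \cong \CC^8$ with the monomial basis $s_1s_2s_3, s_1s_2t_3,\dots,t_1t_2t_3$ appearing in the displayed equation, and $\lambda$ is precisely the coordinate vector of $F_\lambda$ in this basis. The key observation is that $h=(h_1,h_2,h_3)$ acts linearly on $V$ by pullback, $F_\lambda \mapsto h^\ast F_\lambda = F_{M_h \cdot \lambda}$ for a suitable $8\times 8$ matrix $M_h$, and the hypersurface $Y_\lambda$ is $h$-invariant precisely when $h^\ast F_\lambda$ is a scalar multiple of $F_\lambda$, i.e.\ when $\lambda$ is an eigenvector of $M_h$. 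So the whole lemma reduces to computing these eigenspaces.

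First I would compute, for each of the three involutions on a single factor, its action on the two linear monomials $(s,t)$ and hence on each tensor slot. From the formulas $A_1\colon(s:t)\mapsto(t:s)$, $A_{-1}\colon(s:t)\mapsto(-t:s)$, and $B\colon(s:t)\mapsto(s:-t)$ one reads off directly how each factor permutes or rescales the pair $(s_i,t_i)$; because each $h_i$ is an involution fixing the factor $i$, the induced map $M_h$ is a tensor product $M_{h_1}\otimes M_{h_2}\otimes M_{h_3}$ of three $2\times 2$ matrices, each of which is either $\Id$, the swap $\left(\begin{smallmatrix}0&1\\1&0\end{smallmatrix}\right)$, the matrix $\left(\begin{smallmatrix}0&-1\\1&0\end{smallmatrix}\right)$, or $\diag(1,-1)$, according as $h_i$ is $\Id$, $A_1$, $A_{-1}$, or $B$. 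The eigenvalues of $M_h$ are then products of eigenvalues of the three factors, and the condition ``$\lambda$ lies in a single eigenspace'' translates, monomial by monomial, into the stated linear relations among the $\lambda_j$.

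Because each listed $h$ is an involution on the surface side, the only relevant eigenvalues of the $2\times 2$ factors are $\pm 1$ (with $A_{-1}$ contributing $\pm i$, combining across factors to give $\pm 1$ on the order-two product), so each normal-form type in the first column of Table \ref{tab1} yields an eigenspace decomposition $V = V_{+1}\oplus V_{-1}$, and invariance of the \emph{hypersurface} $Y_\lambda$ (as opposed to the form) means $\lambda$ lies in one of these two eigenspaces. Writing out the eigenvector conditions in the monomial basis produces exactly the linear conditions tabulated. Since the lemma is stated ``up to a permutation of the three factors,'' I would only need to carry out this computation for each normal-form representative, with the tensor-product structure making the permuted cases automatic.

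The computation is genuinely routine once the tensor-product description is in place; the only care needed is bookkeeping. The main potential obstacle is the sign and index conventions: one must track precisely how $A_{-1}$ (with its factor-of-$i$) interacts across factors, and one must be careful to record invariance of the \emph{variety} $Y_\lambda$, which allows an overall scalar, rather than invariance of the polynomial $F_\lambda$ on the nose. Matching the two eigenspaces $V_{+1}$ and $V_{-1}$ correctly against the rows of Table \ref{tab1} is thus the one place where an error could creep in, but it is a finite and mechanical check --- exactly the kind of verification the authors describe as ``easy.''
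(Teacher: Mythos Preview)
Your proposal is correct and is exactly the computation the paper has in mind; the authors do not give a proof beyond calling the lemma ``easy to verify,'' and your tensor-product/eigenspace description is the natural way to carry out that verification. One small wording issue: the eigenvalues of $M_h$ are not literally $\pm 1$ in general but rather $\pm c$ with $c^2$ as recorded in the last column of Table~\ref{tab1} (e.g.\ $c=\pm i$ when an odd number of $A_{-1}$'s appear), though of course projectively this is the same two-eigenspace decomposition you describe.
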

{\small\begin{center}
\begin{table}[!h]
\renewcommand\arraystretch{1}
\setlength{\tabcolsep}{2pt}
\begin{tabular}{c||c|c|c|c|c|c|c|c||c}
 $h$ &  $\lambda_1$ &$\lambda_2$   & $\lambda_3$  &  $\lambda_4$ & $\lambda_5$  &   $\lambda_6$&
 $\lambda_7$   &  $\lambda_8$ &   $c^2$\\ 
\hline
 $\Id,\Id,A_{\alpha_3}$ &   &  $c\lambda_1$ &   & $c\lambda_3$  &   & $c\lambda_5$  &   &  $c\lambda_7$ &  $\alpha_3$ \\ 
\hline 
 \multirow{2}{*}{$\Id,\Id,B$} &   &  0 &   &   0&   &   0&   &   0&   \\ 
 &  0 &   &   0&   &  0 &   &  0 &  &   \\ 
  \hline 
 $\Id,A_{\alpha_2},A_{\alpha_3}$ &   &   & $c\alpha_3\lambda_2$  &  $c\lambda_1$ &   &   &  $c\alpha_3\lambda_6$
  &   $c\lambda_5$&   $\alpha_2\alpha_3$\\ 
\hline 
 $\Id,A_{\alpha_2},B$ &   &   & $c\lambda_1$  & $-c\lambda_2$  &   &   &  $c\lambda_5$ &   $-c\lambda_6$&
   $\alpha_2$ \\ 
\hline 
\multirow{2}{*}{$\Id,B,B$}  &   &  0 &  0 &   &   & 0  &0   &   &   \\ 
        &  0 &   &     &0   & 0  &   &   & 0   &  \\ 
\hline 
$A_{\alpha_1},A_{\alpha_2},A_{\alpha_3}$  &   &   &   &   &  $c\alpha_2\alpha_3\lambda_4$ &  $c\alpha_2\lambda_3$ 
&  $c\alpha_3\lambda_2$ &   $c\lambda_1$&  $\alpha_1\alpha_2\alpha_3$ \\ 
\hline 
$A_{\alpha_1},A_{\alpha_2},B $ &   &   &   &   &   $c\alpha_2\lambda_3$ &$-c\alpha_2\lambda_4$    &$c\lambda_1$    
&$-c\lambda_2$    &$\alpha_1\alpha_2$   \\ 
\hline 
$A_{\alpha_1},B,B$  &   &   &   &  &  $c\lambda_1$&  $-c\lambda_2$ &  $-c\lambda_3$   &$c\lambda_4$   & 	
$\alpha_1$  \\ 
\hline 
\multirow{2}{*}{$B,B,B$}  &   &0   & 0  &   & 0  &   &   &  0  &  \\ 
        &  0 &   &     &0   &   &0   & 0  &    &  \\ 

\end{tabular}
\caption{ } 
\label{tab1}
\end{table}
\end{center}}

\noindent
Note that in Table \ref{tab1},  the numbers $\alpha_i \in \{ \pm 1 \}$, since they are labelling $A_1$ and $A_{-1}$. If for a given case there appear two rows,
this means that there are two alternatives, one for each row.

\begin{remark}		\label{ibb}
Consider the following matrices: 
\begin{equation}
\Gamma_1:=\left(\begin{array}{cc}1&1\\1&-1\\\end{array}\right) \qquad
\Gamma_{-1}:=\left(\begin{array}{cc}i&i\\-1&1\\ \end{array}\right) \,,
\end{equation}
and  denote by $f_1$, respectively $f_{-1}$, the induced projectivities in  $\mathrm{Aut}(\mathbb P^1)$ (observe that $f_1 = f_1^{-1}$).

It is  straightforward  to verify the following conjugacies
\begin{itemize}
\item $B= f_1^{-1} \circ A_1\circ f_1 =  f_{-1} ^{-1} \circ A_{-1} \circ f_{-1}$,
\item $ A_1= f_1^{-1} \circ B\circ f_1 = f_{-1} ^{-1} \circ B\circ f_{-1}$,
\item $A_{-1}= f_1^{-1} \circ A_{-1}\circ f_1 = f_{-1} ^{-1} \circ A_1\circ f_{-1}$.	
\end{itemize}
\end{remark}

\begin{remark}\label{not-irr}

If  $Y_{{\lambda}}$ is invariant under $h=(\Id,\Id,A_\alpha) $ ($\alpha=\pm 1$), or under $h=(\Id,\Id,B) $ then the equation of $Y_{{\lambda}}$ is 
reducible. Since these projectivities are conjugate, it suffices to consider the case $h=(\Id,\Id,B) $, when 
the equation of $Y_{{\lambda}}$ is 
\begin{eqnarray*}
s_3 ( \lambda_1s_1s_2+\lambda_3s_1t_2+\lambda_5t_1s_2+\lambda_7t_1t_2)=0 &\mbox{ or } \\
t_3 ( \lambda_2s_1s_2+\lambda_4s_1t_2+\lambda_6t_1s_2+\lambda_8t_1t_2)=0  
\end{eqnarray*}
 \end{remark}

\noindent The above enable us to  prove the following:
\begin{proposition}\label{cc2}
Let ${\lambda} \in \mathbb C^8 \setminus \{0\}$ be such that $Y_{{\lambda}}$ is irreducible. Assume moreover that there is a subgroup $H_1\cong (\mathbb Z/ 2\mathbb Z)^2$  of $\mathcal H$,
such that $Y_{{\lambda}}$ is $H_1$-invariant.
Then, up to the action of $\PP \GL(2, \CC)^3$ and up to a permutation of the factors of $(\mathbb P^1)^3$, there are exactly  two possibilities:
\begin{itemize}
\item[i)] $H_1=\langle (A_1,A_1,A_1), (\Id,B,B) \rangle$,  or
\item[ii)] $H_1=\langle (\Id,B,B), (B,B,\Id) \rangle$.
\end{itemize}
\end{proposition}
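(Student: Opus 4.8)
The plan is to classify, up to the available symmetries, all subgroups $H_1\cong(\mathbb Z/2\mathbb Z)^2$ of $\mathcal H$ admitting an irreducible invariant $Y_\lambda$, by working systematically through the pairs of commuting involutions allowed by the Lemma. First I would record the crucial constraint coming from Remark \ref{not-irr}: if any nonidentity element of $H_1$ has the form $(\Id,\Id,A_\alpha)$ or $(\Id,\Id,B)$ (or any permutation thereof), then every invariant $Y_\lambda$ is reducible. Hence, for $Y_\lambda$ irreducible, no nonidentity element of $H_1$ may fix two of the three factors; equivalently, every nontrivial $h\in H_1$ must act nontrivially on at least two of the three $\mathbb P^1$'s. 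Since $H_1\cong(\mathbb Z/2\mathbb Z)^2$ has three nonidentity elements $h,h',hh'$, this support condition on all three is the backbone of the classification.

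Next I would exploit the conjugacies in Remark \ref{ibb}, which show that under $\PP\GL(2,\CC)^3$ the three involutions $A_1,A_{-1},B$ of a single factor are all conjugate to one another (with $f_1,f_{-1}$ realizing $B\leftrightarrow A_1$ and the various $A$-swaps). This lets me normalize the ``type'' appearing in each slot: in any given factor I may assume the nontrivial local action is, say, $A_1$ or $B$ as convenient, and I may also permute the three factors freely. So the classification reduces to a finite combinatorial problem: choose, for each of the three nonidentity elements of $H_1$, which factors it acts on nontrivially and with which of the three involution-types, subject to (i) the two-factor support condition above, (ii) the group law $h\cdot h' = hh'$ computed factorwise, and (iii) compatibility with commutativity (automatic here, since all the listed involutions on a common $\mathbb P^1$ either coincide or generate a $(\mathbb Z/2)^2$). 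I would then cross-reference each surviving abstract pattern against Table \ref{tab1} to confirm that the intersection of the linear conditions imposed by the two generators actually leaves an irreducible $Y_\lambda$, discarding any configuration whose solution space consists only of reducible equations.

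Carrying this out, after normalization the essential distinction is how many factors carry a ``genuine'' (non-$B$, i.e.\ $A$-type) action versus how the two generators overlap. The two surviving normal forms should be exactly the ones in the statement: case (i), $\langle(A_1,A_1,A_1),(\Id,B,B)\rangle$, where one generator acts as an $A$ on all three factors and the second shares support on the last two; and case (ii), $\langle(\Id,B,B),(B,B,\Id)\rangle$, where each generator is a pure $B$-action supported on two overlapping factors, so that the product $(B,\Id,B)$ again has two-factor support. In each case I would verify using the relevant rows of Table \ref{tab1} (the rows $\Id,A_{\alpha_2},A_{\alpha_3}$ and $A_{\alpha_1},A_{\alpha_2},A_{\alpha_3}$ for (i), and the $B,B$-type rows for (ii)) that the simultaneous linear conditions on $\lambda$ still admit irreducible solutions, thereby confirming existence as well as the normal form.

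The main obstacle I anticipate is not any single computation but ensuring the case analysis is genuinely exhaustive while correctly collapsing the many a priori configurations under the $\PP\GL(2,\CC)^3$-action and factor permutations; in particular one must be careful that the three nonidentity elements are treated symmetrically (the two chosen generators are not canonical), so I would organize the argument by the unordered multiset of ``support patterns'' of the three involutions rather than by a choice of generators, and only at the end pick generators to match the stated normal forms. A secondary subtlety is checking irreducibility: it is not enough that $H_1$ avoids the forbidden single-factor elements of Remark \ref{not-irr}; one must confirm that the resulting linear subspace of $\lambda$'s genuinely contains irreducible members and is not forced by the combined conditions into a reducible locus, which is where Table \ref{tab1} does the decisive work.
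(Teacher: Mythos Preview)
Your proposal is correct and relies on the same three ingredients as the paper's proof: the reducibility constraint of Remark~\ref{not-irr}, the conjugacies of Remark~\ref{ibb}, and the linear conditions in Table~\ref{tab1}. The difference is purely organizational. The paper does not attempt a symmetric classification by support patterns; instead it breaks symmetry immediately by normalizing one generator $h$ to either $(B,B,B)$ or $(\Id,B,B)$ (any nontrivial element of $\mathcal H$ with support $\geq 2$ is conjugate to one of these via Remark~\ref{ibb}), and then runs linearly through the finitely many candidates for the second generator $h'$, discarding each either because $hh'$ has one-factor support, or because Table~\ref{tab1} forces $\lambda=0$ or a visibly reducible equation, or by conjugating into one of the two target groups. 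Your symmetric approach is conceptually cleaner but in practice no shorter: the support multiset does not determine $H_1$ up to conjugacy (in a factor where two elements act nontrivially they may act by the same or by different involutions), so you will still have to split each pattern into subcases and invoke Table~\ref{tab1} and the explicit conjugations $f_{\pm 1}$ exactly as the paper does. One concrete case worth flagging in your write-up, which you allude to but should make explicit, is $H_1=\langle(\Id,B,B),(\Id,A_{\alpha_2},A_{\alpha_3})\rangle$: every nonidentity element has two-factor support $\{2,3\}$, yet the invariant equations are all of the form $(\lambda_1 s_1+\lambda_5 t_1)(s_2s_3+ct_2t_3)=0$ and hence reducible---this is precisely the example showing that the support condition alone is insufficient.
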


\begin{proof} Let $H_1=\langle h,h' \rangle$. Then,  by  Remarks \ref{not-irr} and \ref{ibb}, after possibly  changing the coordinates of $(\mathbb P^1)^3$,  
we may assume that  $h=(B, B, B) $  or $=(\Id,B,B)$.

\noindent
\underline{1)  $h=(B,B,B)$:} in this case  $h'\in\{ (\Id,B,B), (A_{\alpha_1},B,B)\}$ implies that $(B, \Id, \Id) \in H_1$ or  $(A_{\alpha_1}, \Id, \Id) \in H_1$, contradicting the irreducibility of $Y_{{\lambda}}$ (cf. Remark \ref{not-irr}).

\noindent
If we assume  that $h'\in\{ (\Id,A_{\alpha_2},B), (A_{\alpha_1},A_{\alpha_2},A_{\alpha_3})\}$, $\alpha_i \in \{ \pm 1\}$, then we see (cf. Table \ref{tab1}) that the invariance of  $Y_{{\lambda}}$ under $h$ and $h'$ implies that ${{\lambda}}=0$: this is a contradiction.

\noindent
Assuming instead  that $h'=(\Id,A_{\alpha_2},A_{\alpha_3})$, then conjugating $h'$ by $(f_1, f_{\alpha_2}, f_{\alpha_3})$, we see that in the new coordinates we have:
$$h=(f_1^{-1}\,B\,f_1, f_{\alpha_2}^{-1}\,B \,f_{\alpha_2}, f_{\alpha_3}^{-1}\,B\,
 f_{\alpha_3})=(A_1,A_1,A_1)$$
and
$$h'=(f_1^{-1}\,\Id\,f_1, f_{\alpha_2}^{-1}\,A_{\alpha_2}\, f_{\alpha_2}, f_{\alpha_3}^{-1}\,A_{\alpha_3}\,
 f_{\alpha_3})=(\Id,B,B)\,,$$
 i.e., we are in case i).
 
\noindent
 Assume finally that $h'=(A_{\pm 1},A_{\pm 1}, B)$. Then $h\cdot h' = (A_{\mp 1},A_{\mp 1}, \Id)$ and we reduce to the previous case showing that we are in case i).

\noindent
\underline{2) $h=(\Id,B,B)$:} in this case if $h'= (\Id,A_{\alpha_2},A_{\alpha_3})$, the equation of 
 $Y_{{\lambda}}$ is (cf. Table \ref{tab1}): 
$$
(\lambda_1s_1+\lambda_5t_1)(s_2s_3+ct_2t_3)=0,
$$
contradicting the irreducibility of $Y_{{\lambda}}$.

\noindent
If  $h'\in\{(B,B,B),(\Id,A_{\alpha_2},B), (\Id,B,A_{\alpha_3}),(A_{\alpha_1},B,B)\}$, we obtain that $Y_{{\lambda}}$ is not irreducible by Remark \ref{not-irr}.

\noindent
Assume that $h'\in\{ (A_{\alpha_1},\Id,A_{\alpha_3}), (A_{\alpha_1},A_{\alpha_2},\Id),  (B,A_{\alpha_2},\Id), (B,\Id,A_{\alpha_3}), \\   (A_{\alpha_1},A_{\alpha_2},B), (A_{\alpha_1},B,A_{\alpha_3}),   (B,A_{\alpha_2},B), (B,B,A_{\alpha_3})\}$. Then one checks easily, consulting Table \ref{tab1}, that ${{\lambda}}=0$, hence also these cases can be excluded.

\noindent
If $h'\in\{ (A_{\alpha},\Id,B), (A_{\alpha},B,\Id)\}$, $\alpha \in \{ \pm 1\}$, after changing the coordinates  by $(f_\alpha, \Id,\Id)$  we get $H_1=\langle(\Id,B,B), (B,\Id,B)\rangle$, hence we are in case ii).

\noindent
Assume now that  $h'=(A_{\alpha_1},A_{\alpha_2}, A_{\alpha_3})$. Changing coordinates by conjugating with $(\gamma_1, \gamma_2, \gamma_3)$, where $\gamma_j:= \Id$ if $\alpha_j=1$ and 
 $\gamma_j:=( f_{-1}\circ f_1)$ if $\alpha_j=-1$ and using the fact that
$$
( f_{-1}\circ f_1)^{-1}\circ B\circ ( f_{-1}\circ f_1)=B, \ \  
 ( f_{-1}\circ f_1)^{-1}\circ A_{-1}\circ ( f_{-1}\circ f_1)=A_1, 
$$
 we see that (in the new coordinates) we are in case i).

\noindent
If  $h'=(B,A_{\alpha_2},A_{\alpha_3})$, then changing the coordinates by conjugating with 
 $(f_1, \gamma_2, \gamma_3)$, where $\gamma_j$ is defined as above,   we are in case i).

\noindent
Finally, if $h'\in\{ (B,\Id,B), (B,B,\Id)\}$, then we are in case ii).
\end{proof}

\begin{remark}
It is seen  immediately that in case i) each Del Pezzo surface $Y_{{\lambda}}=\{\lambda_1s_1s_2s_3+\lambda_8 t_1t_2t_3=0\}$ is invariant under $H_1$, whereas in case ii) each surface $Y_{{\lambda}}=\{\lambda_1(s_1s_2s_3+t_1t_2t_3)+ \lambda_4(s_1t_2t_3+t_1s_2s_3)=0\}$ is invariant under $H_1$. In particular, in both respective cases   i) and ii), we obtain a linear action of  $H_1$  on the vector space $V:= H^0((\mathbb P^1)^3, \mathcal O_{(\mathbb P^1)^3} (1,1,1))$, which is independent of the chosen invariant surface in the pencil (see proposition \ref{linearization}).
\end{remark}

\begin{proposition}\label{char1}
With the same notation as in Proposition \ref{cc2}, the respective decompositions of $V$ in character spaces with respect to the above action of $H_1 \cong (\mathbb Z/ 2 \mathbb Z)^2$ are as follows:

\noindent
{\em i)  $H_1=\langle (A_1,A_1,A_1), (\Id,B,B)\rangle$:}
\begin{itemize}
\item $V^{++} = \{\lambda_1(s_1s_2s_3+t_1t_2t_3)+ \lambda_4(s_1t_2t_3+t_1s_2s_3) \mid \lambda_1, \lambda_4 \in \mathbb C \} \cong \mathbb C^2$;
\item $V^{+-} = \{\lambda_2(s_1s_2t_3+t_1t_2s_3)+ \lambda_3(s_1t_2s_3+t_1s_2t_3) \mid \lambda_2, \lambda_3 \in \mathbb C \}$;
\item $V^{-+} = \{\lambda_1(s_1s_2s_3-t_1t_2t_3)+ \lambda_4(s_1t_2t_3-t_1s_2s_3) \mid \lambda_1, \lambda_4 \in \mathbb C \} $;
\item $V^{--} = \{\lambda_2(s_1s_2t_3-t_1t_2s_3)+ \lambda_3(s_1t_2s_3-t_1s_2t_3) \mid \lambda_2, \lambda_3 \in \mathbb C \}$.
\end{itemize}

\noindent
{\em ii) $H_1=\langle (\Id,B,B), (B,B,\Id)\rangle$:}
\begin{itemize}
\item $V^{++} = \{\lambda_1s_1s_2s_3+\lambda_8 t_1t_2t_3 \mid \lambda_1, \lambda_8 \in \mathbb C \} \cong \mathbb C^2$;
\item $V^{+-} = \{\lambda_4s_1t_2t_3+\lambda_5 t_1s_2s_3 \mid \lambda_4, \lambda_5 \in \mathbb C \}$;
\item $V^{-+} = \{\lambda_2s_1s_2t_3+\lambda_7 t_1t_2s_3 \mid \lambda_2, \lambda_7 \in \mathbb C \} $;
\item $V^{--} = \{\lambda_3s_1t_2s_3+\lambda_6 t_1s_2t_3 \mid \lambda_3, \lambda_6 \in \mathbb C \}$.
\end{itemize}

\end{proposition}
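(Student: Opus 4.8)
The plan is to reduce the statement to an elementary simultaneous diagonalization of two commuting involutions on an explicit $8$-dimensional space. First I would identify $V=H^0((\mathbb P^1)^3,\mathcal O_{(\mathbb P^1)^3}(1,1,1))$ with $W_1\otimes W_2\otimes W_3$, where $W_i=\langle s_i,t_i\rangle$ is the two-dimensional space of linear forms on the $i$-th factor, so that the eight monomials $s_1s_2s_3,\dots,t_1t_2t_3$ form the basis dual to the coefficients $\lambda_1,\dots,\lambda_8$ of the equation defining $Y_{\lambda}$. On each $W_i$ the relevant projectivities act by the natural linear lifts: $A_1$ swaps $s_i\leftrightarrow t_i$ (the matrix $\left(\begin{smallmatrix}0&1\\1&0\end{smallmatrix}\right)$), while $B$ fixes $s_i$ and sends $t_i\mapsto -t_i$ (the matrix $\diag(1,-1)$). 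The action of $H_1$ on $V$ is the induced tensor-product action, i.e.\ exactly the linear action singled out in the preceding remark.

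Before computing I would check that these lifts furnish a genuine representation of $(\mathbb Z/2\mathbb Z)^2$ and not merely a projective one. Each lift squares to the identity, so only commutativity is at issue. In case ii) both generators $(\Id,B,B)$ and $(B,B,\Id)$ are diagonal, hence commute. In case i) one has $A_1B=-BA_1$ on a single factor; however the generator $(\Id,B,B)$ carries $B$ on \emph{two} factors, so comparing $(A_1\otimes A_1\otimes A_1)(\Id\otimes B\otimes B)$ with $(\Id\otimes B\otimes B)(A_1\otimes A_1\otimes A_1)$ the two sign changes cancel and the generators commute on $V$. Thus in both cases $V$ is a genuine $(\mathbb Z/2\mathbb Z)^2$-module and splits as a direct sum of the four character spaces $V^{\pm\pm}$.

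Case ii) is then immediate: since both generators act diagonally in the monomial basis, each of the eight monomials is a simultaneous eigenvector. Writing the character of a monomial as $(\chi(g_1),\chi(g_2))$ with $g_1=(\Id,B,B)$ and $g_2=(B,B,\Id)$, the sign contributed by $g_1$ is $-1$ raised to the number of factors among the second and third equal to $t$, and similarly for $g_2$ on the first and second factors. Reading these signs off sorts the eight monomials into the four pairs displayed, giving $V^{++}=\langle s_1s_2s_3,t_1t_2t_3\rangle$, and so on. For case i) I would first diagonalize the diagonal generator $g_2=(\Id,B,B)$, whose eigenvalue on a monomial is again $-1$ raised to the number of $t$'s among the last two factors; this splits the eight monomials into two four-dimensional eigenspaces. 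The other generator $g_1=(A_1,A_1,A_1)$ acts on the basis by the fixed-point-free involution $s_i\leftrightarrow t_i$ in all three factors simultaneously, pairing each monomial with its complement and preserving each $g_2$-eigenspace. Within a fixed $g_2$-eigenspace one diagonalizes $g_1$ by forming symmetric and antisymmetric combinations of the two paired monomials, the symmetric ones carrying the $+$ character and the antisymmetric ones the $-$ character; this yields precisely the four listed spaces, for instance $V^{++}=\langle s_1s_2s_3+t_1t_2t_3,\ s_1t_2t_3+t_1s_2s_3\rangle$.

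The computation is entirely routine linear algebra, so I do not expect a genuine obstacle. The only two points demanding care are the verification that the naive lifts actually commute on $V$ (the sign cancellation noted above, which is what makes $H_1$ act linearly rather than only projectively) and keeping the labelling of the two characters consistent with the chosen ordering of the generators of $H_1$. Once these bookkeeping matters are fixed, the four decompositions in each case follow by direct inspection of the eight monomials.
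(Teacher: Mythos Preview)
Your proposal is correct and amounts to the same direct computation the paper intends: the paper's proof reads in its entirety ``This is a simple calculation using Table~\ref{tab1},'' and your tensor-product description of the action on the eight monomials is an equivalent way of carrying out that calculation. One small difference in packaging: you verify explicitly that the chosen linear lifts of the two generators commute on $V$ (the sign cancellation in case~i)), whereas the paper secures the existence of a linear $H_1$-action on $V$ via the preceding Remark, by exhibiting an $H_1$-invariant hypersurface in the pencil; either route justifies the character-space decomposition, and the eigenvector computation is then identical.
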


\begin{proof}
This is a simple calculation using Table \ref{tab1}.
\end{proof}
The same arguments as in the  proof of Proposition \ref{cc2} yield the following statement:

\begin{proposition}\label{cc3}
Let ${\lambda} \in \mathbb C^8 \setminus \{0\}$ be such that $Y_{{\lambda}}$ is irreducible. Assume moreover that there is a subgroup $H_0 \cong (\mathbb Z/ 2\mathbb Z)^3$  of $\mathcal H$,
such that $Y_{{\lambda}}$ is $H_0$-invariant.
Then, up to the action of $\PP \GL(2, \CC)^3$ and up to a permutation of the factors of $(\mathbb P^1)^3$, we have: 
$$H_0=\langle (\Id,B,B),(A_1,A_1,A_1),  (B,B,\Id)\rangle \,.$$
\end{proposition}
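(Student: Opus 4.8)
The plan is to leverage Proposition \ref{cc2} and its proof, reducing the classification of order-8 invariance groups $H_0 \cong (\mathbb Z/2\mathbb Z)^3$ to the already-completed classification of order-4 groups $H_1 \cong (\mathbb Z/2\mathbb Z)^2$. Since $H_0$ contains a copy of $H_1$ (any two independent involutions generate one), and $Y_\lambda$ is in particular $H_1$-invariant, Proposition \ref{cc2} tells me that after the $\PP\GL(2,\CC)^3$-action and permutation of factors, $H_0$ must contain a subgroup equal to case i) $\langle(A_1,A_1,A_1),(\Id,B,B)\rangle$ or case ii) $\langle(\Id,B,B),(B,B,\Id)\rangle$. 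So I would fix a normal form for $H_1 \subset H_0$ and then enumerate the possible third generators $h''$ completing $H_1$ to $H_0$, discarding those that force reducibility of $Y_\lambda$ or force $\lambda = 0$.

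First I would treat the case where $H_0 \supset H_1$ of type ii), i.e. $\langle(\Id,B,B),(B,B,\Id)\rangle$; note this group also contains $(B,\Id,B)$. The equation of an invariant $Y_\lambda$ here is $\lambda_1 s_1s_2s_3+\lambda_8 t_1t_2t_3=0$ (from Proposition \ref{char1}ii, the one-dimensional intersection of the relevant pencils, or from the remark after Proposition \ref{cc2}). I would run through each candidate $h'' \in \sH$ not already in $H_1$, consult Table \ref{tab1} to impose invariance of this specific two-monomial equation, and check compatibility: most choices either kill all coefficients (forcing $\lambda=0$) or introduce a factor that violates irreducibility via Remark \ref{not-irr}. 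The one surviving possibility should be $h''=(A_1,A_1,A_1)$ (or a conjugate), which adjoins the ``diagonal'' involution and lands us in the asserted group. Then I would treat the case $H_0 \supset H_1$ of type i), $\langle(A_1,A_1,A_1),(\Id,B,B)\rangle$, whose invariant surface is $\lambda_1(s_1s_2s_3+t_1t_2t_3)+\lambda_4(s_1t_2t_3+t_1s_2s_3)=0$; enumerating third generators and imposing invariance on the two-parameter family should again leave only generators producing the same group up to coordinate change, since adding any further $B$-type involution in a single factor forces reducibility while adding a genuinely new $A$-type element collapses the coefficient space.

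The key mechanical tool throughout is the same dichotomy that drove Proposition \ref{cc2}: an element with a $B$ or $A_\alpha$ in exactly one factor and $\Id$ in the other two (obtained as products $h\cdot h''$) contradicts irreducibility by Remark \ref{not-irr}, while generic combinations of $A_\alpha$'s over-determine the linear system and force $\lambda=0$. I would organize the enumeration by the number of $\Id$-entries and $B$-entries in $h''$, using Remark \ref{ibb} to normalize $B \leftrightarrow A_1$ and to conjugate away signs $\alpha_j = -1$ exactly as in the proof of Proposition \ref{cc2}, so that each surviving case is transported into the single normal form $\langle(\Id,B,B),(A_1,A_1,A_1),(B,B,\Id)\rangle$. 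The main obstacle I anticipate is purely bookkeeping: ensuring the case list of third generators $h''$ is exhaustive up to the residual symmetries (permutations of factors fixing $H_1$ and the conjugations of Remark \ref{ibb}) and that no combination yields a genuinely inequivalent third group — but since every branch collapses either to reducibility or to $\lambda=0$ except the diagonal completion, and since the problem is finite, this reduces to a careful but routine table-driven verification rather than a new idea.
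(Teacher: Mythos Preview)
Your proposal is correct and is essentially the approach the paper has in mind: the paper gives no separate proof for Proposition \ref{cc3}, stating only that ``the same arguments as in the proof of Proposition \ref{cc2} yield'' it, and your plan of normalizing a $(\mathbb Z/2\mathbb Z)^2$-subgroup via Proposition \ref{cc2} and then running the Table \ref{tab1} case analysis on a third generator is exactly such an argument. Your organization (bootstrapping from the already-proved result rather than redoing the full two-generator analysis inside a three-generator one) is in fact slightly cleaner than a literal repetition, and the residual-symmetry bookkeeping you flag is the only point requiring care.
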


\begin{remark}
Again we see immediately that the Del Pezzo surface $Y_{{\lambda}}=\{ s_1s_2s_3 +  t_1t_2t_3\}$ is invariant under $H_0$, hence we get again
a linear action of  $H_0$  on the vector space $V:= H^0((\mathbb P^1)^3, \mathcal O_{(\mathbb P^1)^3} (1,1,1))$.
\end{remark}

\begin{proposition}\label{char2}
Use the same notation as in Proposition \ref{cc3}; then  $V$ decomposes  in $8$ one-dimensional character spaces for the action of $H_0 \cong (\mathbb Z/ 2 \mathbb Z)^3$, as follows:
\begin{itemize}
\item $V^{+++} = \{\lambda(s_1s_2s_3+ t_1t_2t_3 ) \mid \lambda \in \mathbb C \} $;
\item $V^{+-+} = \{\lambda(s_1s_2s_3- t_1t_2t_3 ) \mid \lambda \in \mathbb C \}$;
\item $V^{++-} = \{\lambda(s_1t_2t_3+ t_1s_2s_3 ) \mid \lambda \in \mathbb C \}$;
\item $V^{+--} = \{\lambda(s_1t_2t_3- t_1s_2s_3 ) \mid \lambda \in \mathbb C \}$;
\item $V^{-++} = \{\lambda(s_1s_2t_3+ t_1t_2s_3 ) \mid \lambda \in \mathbb C \}$;
\item $V^{--+} =  \{\lambda(s_1s_2t_3- t_1t_2s_3 ) \mid \lambda \in \mathbb C \}$;
\item $V^{-+-} = \{\lambda(t_1s_2t_3+ s_1t_2s_3 ) \mid \lambda \in \mathbb C \}$;
\item $V^{---} = \{\lambda(t_1s_2t_3- s_1t_2s_3 ) \mid \lambda \in \mathbb C \}$.
\end{itemize}
\end{proposition}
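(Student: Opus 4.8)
The plan is to exploit that $H_0 \cong (\mathbb{Z}/2\mathbb{Z})^3$ is a finite abelian group acting linearly on the $8$-dimensional space $V = H^0((\mathbb{P}^1)^3,\mathcal{O}_{(\mathbb{P}^1)^3}(1,1,1))$, whose monomial basis is $\{s_1s_2s_3,\, s_1s_2t_3,\,\dots,\, t_1t_2t_3\}$. By Maschke's theorem $V$ splits as a direct sum of character spaces, and since $H_0$ has exactly $8 = \dim V$ one-dimensional characters, it suffices to produce eight simultaneous eigenvectors realizing pairwise distinct characters: these are then forced to span, and every character space must be one-dimensional. So the whole statement reduces to writing down eight explicit common eigenvectors and reading off their characters.

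First I would record how the three generators act on the monomial basis. With the parametrization of Section~\ref{intersection}, the involution $B\colon (s:t)\mapsto(s:-t)$ acts on linear forms by $s\mapsto s$, $t\mapsto -t$, so $g_1=(\Id,B,B)$ and $g_3=(B,B,\Id)$ act \emph{diagonally} on the eight monomials by signs, whereas $A_1\colon(s:t)\mapsto(t:s)$ acts by the swap $s\leftrightarrow t$, so $g_2=(A_1,A_1,A_1)$ \emph{permutes} the monomials, pairing each monomial $m$ with its complement $m'$ obtained by exchanging every $s_i$ with $t_i$. The four complementary pairs are $\{s_1s_2s_3,t_1t_2t_3\}$, $\{s_1s_2t_3,t_1t_2s_3\}$, $\{s_1t_2s_3,t_1s_2t_3\}$ and $\{s_1t_2t_3,t_1s_2s_3\}$. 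Since $g_2$ is the only generator acting non-diagonally, I would diagonalize it first: on each pair it has the symmetric eigenvector $m+m'$ (eigenvalue $+1$) and the antisymmetric one $m-m'$ (eigenvalue $-1$), and these eight combinations $m\pm m'$ are precisely the vectors listed in the statement.

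It remains to check that these are eigenvectors for $g_1$ and $g_3$ as well, with the advertised signs. Because $H_0$ is abelian, $g_1$ and $g_3$ commute with $g_2$ and hence preserve its two eigenspaces; the key structural point is that each of $g_1,g_3$ acts with the \emph{same} sign on both members $m,m'$ of a complementary pair. Indeed $g_1=(\Id,B,B)$ multiplies a monomial by $(-1)^{k}$, where $k$ is the number of $t$'s occurring in the second and third factors, and the swap changes this count from $k$ to $2-k$, of the same parity; the identical parity argument applies to $g_3=(B,B,\Id)$ using the first and second factors. Consequently $g_1$ and $g_3$ act as scalars $\pm1$ on each $m\pm m'$, so all eight vectors are common eigenvectors. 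A direct sign count of the monomial exponents then assigns to each of them a triple of signs for $(g_1,g_2,g_3)$, and one verifies that these eight triples are exactly the distinct characters indexing $V^{\pm\pm\pm}$ in the table. The only delicate point, and the place where a slip is most likely, is keeping the conventions for $B$ and $A_1$ consistent and confirming the equal-sign behaviour on each pair; beyond this bookkeeping there is no genuine obstacle, as the dimension count closes the argument immediately.
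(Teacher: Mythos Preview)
Your proposal is correct and is essentially the same direct verification the paper has in mind: the paper gives no explicit proof for this proposition, relying implicitly on the same one-line justification as Proposition~\ref{char1} (``a simple calculation using Table~\ref{tab1}''). Your write-up is in fact more informative, since you explain \emph{why} the eight vectors $m\pm m'$ are forced to be the common eigenvectors---namely that $g_2=(A_1,A_1,A_1)$ is the only non-diagonal generator and that the parity argument shows $g_1,g_3$ act with equal sign on each complementary pair---rather than just tabulating the outcome.
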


\begin{remark}\label{invDP}
Case 1):
$$H_1: =\langle (\Id,B,B),(A_1,A_1,A_1) \rangle  \cong (\mathbb Z/ 2\mathbb Z)^2\ \triangleleft\mathcal H.$$
Then there are four pencils of Del Pezzo surfaces, which are left  invariant by $H_1$
(cf. Proposition \ref{char1});
 their inverse images under $\pi'$ (see (\ref{diag1}))  
$\pi'^{-1}(Y_\nu)$ (resp. $\pi'^{-1}(Y'_\nu)$,   $\pi'^{-1}(Y''_\nu)$, $\pi'^{-1}(Y'''_\nu)$) 
are  pencils of hypersurfaces of multidegree $(2,2,2)$ in $A^0= E_1 \times E_2 \times E_3$ invariant under
 $\mathcal G'_1\cong (\mathbb Z/ 2\mathbb Z)^5\subset (\mathbb Z/ 2\mathbb Z)^9$. 
 
\noindent We list now the four pencils  (${\nu}=(\nu_1:\nu_2) \in \mathbb P^1$):
\begin{equation}\label{eqDp1}
Y_{\nu}:=\{\nu_1(s_1s_2s_3+ t_1t_2t_3)+ \nu_2(s_1t_2t_3+t_1s_2s_3)=0\}\,, 
\end{equation}
\begin{equation}
Y'_{\nu}:=\{\nu_1(s_1s_2t_3+ t_1t_2s_3)+ \nu_2(s_1t_2s_3+t_1s_2t_3)=0\}\,, 
\end{equation}
\begin{equation}
Y''_{\nu}:=\{\nu_1(s_1s_2s_3- t_1t_2t_3)+ \nu_2(s_1t_2t_3-t_1s_2s_3)=0\}\,, 
\end{equation}
\begin{equation}
Y'''_{\nu}:=\{\nu_1(s_1s_2t_3- t_1t_2s_3)+ \nu_2(s_1t_2s_3-t_1s_2t_3)=0\}\,. 
\end{equation}

It is immediate to see that the $4$ pencils are transformed to each other by the elements 
of the group $ \sH  = ((\ZZ/2)^2)^3$ (for instance we pass from the first to the second via $s_3 \leftrightarrow t_3$,
from the first to the third via $t_1  \leftrightarrow - t_1$, and so on).
Therefore, in the future we shall only consider the first pencil: (\ref{eqDp1}).

Case 2):   $$H_1: =\langle (\Id,B,B), (B,B,\Id)\rangle \cong (\mathbb Z/ 2\mathbb Z)^2\ \triangleleft\mathcal H.$$
Then there are four pencils of Del Pezzo surfaces, which are left  invariant by $H_1$; 
 their respective inverse images  under $\pi'$ yield four pencils, invariant under 
 $\mathcal G_1\cong (\mathbb Z/ 2\mathbb Z)^5\subset (\mathbb Z/ 2\mathbb Z)^9$.
 
   \noindent The four pencils are given by the following equations  ($\mu\in \mathbb C, \mu \neq 0$):
\begin{equation}\label{eqDp2}
Y_\mu:=\{s_1s_2s_3+\mu\, t_1t_2t_3=0\}\,,
\end{equation}
 \begin{equation}
Y'_\mu:=\{s_1t_2s_3+\mu\, t_1s_2t_3=0\}\,,
\end{equation}
\begin{equation}
Y''_\mu:=\{s_1t_2t_3+\mu\, t_1s_2s_3=0\}\,,
\end{equation}
\begin{equation}
Y'''_\mu:=\{s_1s_2t_3+\mu\, t_1t_2s_3=0\}\,.
\end{equation}

Also in this case the $4$ pencils are transformed to each other by the elements 
of the group $ \sH  = ((\ZZ/2)^2)^3$, hence
 in the future we shall only consider the first pencil: (\ref{eqDp2}).

Case 3): 
$$H_0: =\langle (\Id,B,B), (A_1,A_1,A_1),  (B,B,\Id)\rangle \cong ((\mathbb Z/ 2\mathbb Z))^3 \triangleleft\mathcal H.$$ 
Then there are eight Del Pezzo surfaces which are are left  invariant by $H_0$; 
their respective inverse images under $\pi'$ are invariant under 
$\mathcal G_0\cong (\mathbb Z/ 2\mathbb Z)^6\subset (\mathbb Z/ 2\mathbb Z)^9$. 

\noindent Their respective equations are  the following ones:
\begin{equation}\label{eqDp3}
Y_{1}:=\{s_1s_2s_3+  t_1t_2t_3 = 0\}, \ \  Y_{-1}:=\{s_1s_2s_3-  t_1t_2t_3 = 0\},
\end{equation}
\begin{equation}
Y_{1}':=\{s_1t_2t_3+  t_1s_2s_3 = 0\}, \ \  Y_{-1}':=\{s_1t_2t_3-  t_1s_2s_3 = 0\},
\end{equation}
\begin{equation}
Y_{1}'':=\{s_1s_2t_3+  t_1t_2s_3 = 0\}, \ \  Y_{-1}'':=\{s_1s_2t_3-  t_1t_2s_3 = 0\},
\end{equation}
\begin{equation}
Y_{1}''':=\{t_1s_2t_3+  s_1t_2s_3 = 0\}, \ \  Y_{-1}''':=\{t_1s_2t_3-  s_1t_2s_3 = 0\}.
\end{equation}

Also here the  $8$ hypersurfaces  are transformed to each other by the elements 
of the group $ \sH  = ((\ZZ/2)^2)^3$, hence
 in the future we shall only consider the first one: (\ref{eqDp3}).

\end{remark}

\begin{definition}\label{BurniatHyp}
Let $\hat{X}$ be an irreducible hypersurface, in the product of three smooth elliptic curves 
$A^0 := E_1 \times E_2 \times E_3$, which is the 
inverse image under $\pi'$ of  a Del Pezzo surface $Y$ of degree 6,  invariant under a subgroup $H \cong (\mathbb Z/ 2\mathbb Z)^2\ \triangleleft\mathcal H$.\\  Then we call $\hat X$ a  \textit{Burniat hypersurface in $A^0$}.
\end{definition}

\begin{lemma} \label{stabs}
Let $\mathcal G_0\cong (\mathbb Z/ 2\mathbb Z)^6\triangleleft (\mathbb Z/ 2\mathbb Z)^3\times
(\mathbb Z/ 2\mathbb Z)^3\times(\mathbb Z/ 2\mathbb Z)^3$
be the  group:
 $$\mathcal G_0:=\{(\epsilon_0,\eta_1,\epsilon_1,\eta_0,\epsilon_2,\zeta_0,\epsilon_3) 
\subset \{\pm 1\}^7 \cong (\mathbb Z/ 2\mathbb Z)^7 \mid \epsilon_1\epsilon_2\epsilon_3=1\}\,,$$
which acts on $E_1\times E_2\times E_3$ by:
$$\begin{array}{lll}x_0\mapsto \epsilon_0x_0\,,& u_0 \mapsto \eta_0 u_0\,, 
 &z_0\mapsto \zeta_0 z_0\,, \\
x_3\mapsto \epsilon_1 x_3\,,  &u_3 \mapsto \epsilon_2 u_3\,,  &  
z_3 \mapsto \epsilon_3 z_3\,
\end{array}\quad
\mbox{ and }\quad
 \begin{pmatrix}x_1\\u_1\\z_1\end{pmatrix}\mapsto \eta_1
 \begin{pmatrix}x_1\\u_1\\z_1\end{pmatrix}\,.
$$

\noindent With the same notation as in Remark \ref{invDP}:
\begin{enumerate}
\item   $\pi'^{-1}(Y_\nu)$ is invariant under the group 
$$\mathcal G'_1:=\{(\epsilon_0,\eta_1,1,\eta_0,\epsilon_2,\zeta_0,\epsilon_3)  \mid \epsilon_2\epsilon_3=1\}\cong (\mathbb Z/ 2\mathbb Z)^5\triangleleft \mathcal G_0\,.$$
\item  $\pi'^{-1}(Y_\mu)$ is invariant under the group 
$$\mathcal G_1:=\{(\epsilon_0,1,\epsilon_1,\eta_0,\epsilon_2,\zeta_0,\epsilon_3)  \mid
 \epsilon_1\epsilon_2\epsilon_3=1\}\cong (\mathbb Z/ 2\mathbb Z)^5\triangleleft \mathcal G_0\,.$$
\item  If $\mu=\pm 1$, then $\pi'^{-1}(Y_{\mu})$ is invariant under $\mathcal G_0$. 
\end{enumerate}
\end{lemma}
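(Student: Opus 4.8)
The plan is to deduce all three statements from a single equivariance principle, so that the only real computation is the identification of the relevant subgroups. The nine--dimensional Galois group $(\ZZ/2)^9$ of $\pi\circ\pi'$ acts on $A^0$ by sign changes of the homogeneous coordinates, and the projection onto the Galois group $\sH$ of $\pi$ is the quotient map $q\colon(\ZZ/2)^9\to\sH$ whose kernel is exactly the deck group $\sH'$ of $\pi'$, generated by $x_0\mapsto -x_0$, $u_0\mapsto -u_0$, $z_0\mapsto -z_0$. Because $\pi'$ merely forgets $x_0,u_0,z_0$, it is equivariant: $\pi'(g\cdot p)=q(g)\cdot\pi'(p)$ for all $g\in(\ZZ/2)^9$ and $p\in A^0$. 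Hence for any subvariety $Y\subseteq P_1$ and any $g$ with $q(g)(Y)=Y$ we get $g(\pi'^{-1}(Y))=\pi'^{-1}(q(g)(Y))=\pi'^{-1}(Y)$; in other words $\pi'^{-1}(Y)$ is invariant under the whole preimage $q^{-1}(\mathrm{Stab}_{\sH}(Y))$.

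Next I would translate the coordinate action defining $\sG_0$ into the dictionary of Section~\ref{intersection}. The parameters $\epsilon_0,\eta_0,\zeta_0$ are precisely the generators of $\ker q=\sH'$; the parameter $\epsilon_i$ negates the last coordinate of the $i$-th factor and hence (item c) maps to $B$ there, so $\epsilon_1\mapsto(B,\Id,\Id)$, $\epsilon_2\mapsto(\Id,B,\Id)$, $\epsilon_3\mapsto(\Id,\Id,B)$; and the single parameter $\eta_1$, negating $x_1,u_1,z_1$ at once, maps (item a) to the diagonal element $(A_1,A_1,A_1)$. A direct check --- using $A_1B=A_{-1}$ on the conic $C$ --- then gives $q(\sG_0)=\langle(\Id,B,B),(A_1,A_1,A_1),(B,B,\Id)\rangle=H_0$, while for $\sG'_1$ (with $\epsilon_1=1$, $\epsilon_2\epsilon_3=1$ and $\eta_1$ free) one finds $q(\sG'_1)=\langle(A_1,A_1,A_1),(\Id,B,B)\rangle$, the group $H_1$ of Case~1, and for $\sG_1$ (with $\eta_1=1$, $\epsilon_1\epsilon_2\epsilon_3=1$) one finds $q(\sG_1)=\langle(B,B,\Id),(\Id,B,B)\rangle$, the group $H_1$ of Case~2. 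In each instance the constraint $\epsilon_i\epsilon_j=1$ is exactly what selects the ``even number of $B$'s'' subgroup, and a comparison of orders (e.g. $|\sG_0|=2^6=|H_0|\cdot|\sH'|$) shows that $\sG_0,\sG'_1,\sG_1$ are the full preimages under $q$.

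Finally, combining the two steps with Remark~\ref{invDP} finishes the proof: there $Y_\nu$ is invariant under $H_1$ (Case~1), $Y_\mu$ under $H_1$ (Case~2) for every $\mu$, and the surfaces $Y_\mu$ with $\mu=\pm1$ (the $Y_{\pm1}$ of Case~3) under $H_0$; hence the equivariance principle immediately yields that $\pi'^{-1}(Y_\nu)$, $\pi'^{-1}(Y_\mu)$ and $\pi'^{-1}(Y_\mu)$ with $\mu=\pm1$ are invariant under $\sG'_1$, $\sG_1$ and $\sG_0$ respectively, which are statements (1), (2) and (3). The one point requiring care is the bookkeeping of the coordinate/transformation dictionary, and in particular the fact that $\eta_1$ is a single diagonal parameter inducing $(A_1,A_1,A_1)$ rather than three independent copies of $A_1$: this is precisely what keeps $q(\sG_0)$ equal to $H_0$ instead of a larger subgroup. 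One should also verify at the outset that each listed sign change is a genuine automorphism of $A^0$, i.e. that negating a single homogeneous coordinate preserves both diagonal quadrics cutting out each $E_j$ --- which is clear since those equations involve only the squares of the coordinates.
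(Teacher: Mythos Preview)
Your proof is correct and follows essentially the same approach as the paper's, only spelled out in much greater detail. The paper's proof is a single sentence pointing to the key dictionary entry --- that $\eta_1$ (simultaneously negating $x_1,u_1,z_1$) corresponds to $(s_j:t_j)\mapsto(t_j:s_j)$ on each factor, i.e.\ to $(A_1,A_1,A_1)\in\sH$ --- and leaves the remaining bookkeeping (the equivariance of $\pi'$, the identifications $q(\sG_0)=H_0$, $q(\sG'_1)=H_1^{\text{Case 1}}$, $q(\sG_1)=H_1^{\text{Case 2}}$, and the appeal to Remark~\ref{invDP}) as routine; you have carried out exactly that routine.
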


\begin{proof}
Just note that  multiplication of $(x_1,u_1,z_1)$ by $-1$ corresponds to $(s_j:t_j) \mapsto(t_j:s_j)$ for each $j=1,2,3$.
\end{proof}

\subsection{Fixed points}

In order to systematically search for all the subgroups $G\cong  (\mathbb Z/ 2\mathbb Z)^3\triangleleft \mathcal G_0$
acting freely on a Burniat hypersurface in $A^0:=E_1\times E_2\times E_3$, we need to determine which elements of  $\mathcal G_0$ have fixed points on $A^0$ .

\begin{remark}
Fix $a_1,a_2 \in \mathbb C$ pairwise distinct such that the curve
$$ E:=\{x_1^2+x_2^2+x_3^3=0, \, x_0^2=a_1x_1^2+a_2x_2^2 \}\subset \mathbb P^2$$
is smooth. Then
$$g(x_0:x_1:x_2:x_3):=(\alpha_0x_0:  \alpha_1x_1:x_2:\alpha_3x_3)\,, \quad \alpha_j\in \{\pm 1\}\,$$
has fixed points on $E$ if and only if
\begin{itemize}
\item either $\alpha_0=\alpha_1=\alpha_3=-1$, or
\item  exactly one $\alpha_i=-1$ and the others are equal to 1.
\end{itemize} 
\end{remark}

 From now on, we change to an additive notation in which $\mathbb Z /2 \mathbb Z$ is the additive group $\{0,1\}$.

Let $g\in \mathcal G_0$ be an element fixing points on $ A^0$.
By \cite[Proposition 3.3]{BC13}, $g$ is an element in Table \ref{FixEl}. 

\begin{table}[!h]
\begin{tabular}{c||ccc||cccccc||cccccccc}
&1&2&3&4&5&6&7&8&9&10&11&12&13&14&15&16&17\\
\hline 
$\epsilon_0$ &  $ 0 $ & $ 0 $& $ 1 $&$ 0 $ &$ 1 $& $ 1 $&$ 0 $ & $ 0 $ & $ 0 $&$ 1 $ &$ 1 $&$ 0 $ &  $0$& 
$0$ & $ 0 $& $ 1 $& $ 1 $ 	\\
$\eta_1 $&      	$ 0 $ & $ 0 $& $ 0 $& $ 0 $& $ 0 $&$ 0 $ & $ 0 $&$ 0 $ & $ 0 $&$ 0 $ &$ 0 $&$ 0 $ & $ 0 $&
$ 1 $ & $ 1 $& $ 1 $&$ 1 $\\
$\epsilon_1 $&  $ 0 $ & $ 0 $& $ 0 $& $ 0 $& $ 0 $&$ 0 $ & $ 0 $&$ 1 $ & $ 1 $&$ 0 $ &$ 0 $&$ 1 $ & $ 1 $&
$ 0 $ & $ 0 $& $ 1 $&$ 1 $\\
$\eta_0$&        	$ 0 $ & $ 1 $& $ 0 $& $ 1 $& $ 0 $& $ 1 $& $ 0 $&$ 0 $ & $ 0 $&$ 1 $ &$ 0 $&$ 1 $ & $ 0 $&
$ 0 $ & $ 1 $& $ 0 $&$ 1 $\\
$\epsilon_2$& 	$ 0 $ & $ 0 $& $ 0 $& $ 0 $& $ 0 $& $ 0 $& $ 1 $&$ 0 $ & $ 1 $&$ 0 $ &$ 1 $& $ 0 $& $ 1 $&
$ 0 $ & $ 1 $& $ 0 $&$ 1 $\\
$\zeta_0$& 			$ 1 $ & $ 0 $& $ 0 $& $ 1 $& $ 1 $& $ 0 $& $ 0 $&$ 0 $ & $ 0 $& $ 1 $&$ 0 $& $ 0 $& $ 1 $&
$ 0 $ & $ 1 $& $ 1 $&$ 0 $\\
$\epsilon_3$&	$ 0 $ & $ 0 $& $ 0 $& $ 0 $& $ 0 $& $ 0 $& $ 1 $&$ 1 $ & $ 0 $&$ 0 $ &$ 1 $& $ 1 $& $ 0 $&
$ 0 $ & $ 1 $& $ 1 $&$ 0 $\\
\end{tabular}
\vspace{.2cm}
\caption{The elements of $\mathcal G_0$ having fixed points on $A^0$, written additively!}
\label{FixEl}
\end{table}

\begin{remark}\label{fixingpts}
1) Let $\hat X:=\pi'^{-1}(Y_{\pm 1})$. In Table \ref{FixEl}, the elements 1-3 fix pointwise a surface $S\subset A^0$.
Each element 4-9 fixes pointwise a curve $C\subset A^0$ and its fixed locus has non trivial 
intersection with $\hat X$ since $\hat X\subset A^0$ is an ample divisor.
Finally, the elements 10-17  have isolated fixed points on $A^0$; arguing as in \cite[Proposition 3.3]{BC13}
one proves that the elements 11-17 have fixed points on $\hat X$,
 while the fixed locus of element 10 
intersects $\hat X$ only for special choices of the three elliptic curves.

2) The same holds for $\hat X:=\pi'^{-1}(Y_{\nu})$ (resp. $\pi'^{-1}(Y_\mu)$), 
considering only the elements  1-7,10,11,14,15 (resp. 1-13),
i.e. the ones belonging to $\mathcal G'_1$ (resp. $\mathcal G_1$).
In particular, the fixed locus of  element 10 intersects $\hat X$ only for special choices of the three elliptic curves and of  the parameter $\nu$ (resp. $\mu$).
\end{remark}

\subsection{ Description in terms of Legendre families}
We now describe the  families of Burniat hypersurfaces in $A^0$
 in terms of Legendre functions $\mathcal{L}$ (see Section \ref{Inouedescription}).
 
 To this purpose, we consider the following 1-parameter family   of intersections of two quadrics:
 
  $$E  (b) :=\{ x_1^2+x_2^2+x_3^2=0,
 \quad x_0^2= (b^2+1)^2 x_1^2+( b^2-1)^2x_2^2\}\\,$$
 where $ b \in \CC \setminus \{ 0, 1, -1 , i, - i \}$.
 
 We set $$ \xi : = \frac{bs}{t}\,,$$
 and in this way  the family of genus one curves $E(b)$ is the Legendre family of
 elliptic curves in Legendre normal affine form:
 $$ y^2 = (\xi^2-1) (\xi^2-a^2), \ \  a : = b^2\,.$$ 

 \noindent  In fact,  
\begin{eqnarray*} 
  x_0^2&=& (b^2+1)^2 x_1^2+( b^2-1)^2x_2^2 =- (a+1)^2 (s^2-t^2)^2+( a -1)^2(s^2 + t^2) ^2 = \\
 &=& 4 [(a^2 + 1) s^2 t^2  - a (t^4 + s^4)]   = 4 t^4 \left[  (a^2 + 1)\left( \frac{\xi} {b}\right)^2 - a \left( 1 + \left( \frac{\xi} {b}\right)^4\right)\right ] = \\
 & =& - 4 t^4 \frac{1}{b^2} [ - (a^2 + 1) \xi^2  + (a^2 + \xi^4) =  \frac{- 4 t^4}{b^2} [   (\xi^2-1) (\xi^2-a^2)] 
 \end{eqnarray*}
 and it suffices to set $$ y : =  \frac{i b x_0}{2 t^2}\,.$$ 
 
 The group $(\ZZ/ 2)^3$ acts fibrewise on the family $E(b)$ via the commuting involutions:
 $$ x_0 \longleftrightarrow - x_0,  \ \ x_3 \longleftrightarrow  - x_3,\ \  x_1 \longleftrightarrow  - x_1, $$
 which on the birational model given by the Legendre family act as
 $$ y  \longleftrightarrow  -  y,  \ \ \xi  \longleftrightarrow  - \xi, \ \ \xi  \longleftrightarrow  \frac{a}{\xi}\,. $$
 
Consider the subgroup
 $$  \Ga_{2,4} : = \left\{ \left( \begin{array}{cc} \alpha & \beta \\ \gamma & \delta \end{array}  \right)   
\in \PP SL (2 , \ZZ)  \biggm|  \begin{array}{cc}
  \alpha \equiv 1 \mod 4, & \beta \equiv  0 \mod 4, \\\gamma \equiv  0 \mod 2, &\delta \equiv  1 \mod 2
  \end{array}  
\right\} $$
a subgroup of index $2$ of the congruence subgroup  
 $$  \Ga_2 : = \left\{ \left( \begin{array}{cc} \alpha & \beta \\ \gamma & \delta \end{array}  \right)   
 \in \PP SL (2 , \ZZ) \biggm| \begin{array}{cc}
  \alpha \equiv 1 \mod 2, & \beta \equiv  0 \mod 2, \\\gamma \equiv  0 \mod 2, &\delta \equiv  1 \mod 2
  \end{array}  
\right\} .$$

To the chain of inclusions $$ \Ga_{2,4}  <    \Ga_2  < \PP SL (2 , \ZZ) $$
corresponds a chain of fields of invariants 
$$ \CC (j) \subset \CC(\la) = \CC(\tau)^{\Ga_2} \subset  \CC(\tau)^{\Ga_{2,4}}\,, $$
where the respective degrees of the extensions are 6, 2.

Here, $\la$ is the cross-ratio of the four points $\mathfrak p (0), \mathfrak p (\frac{1}{2}), \mathfrak p (\frac{\tau}{2}), \mathfrak p (\frac{1 + \tau}{2}), $
 where $\mathfrak p $ is the Weierstrass function, and $j (\la) = \frac{(\la^2 - \la + 1)^3}{\la^2 (\la-1)^2}$ is the $j$-invariant.

If $\la (a)$ is the cross ratio of the four points $1,-1, a, - a$,    $\la (a) = \frac{(a-1)^2}{( a+1)^2 }$,
thus $\CC(a) = \CC(\sqrt \la)$ is a quadratic extension and there are two values of $a$ for which we get a Legendre function for the elliptic curve.
Setting $b : = \sL (\frac{\tau}{4} )$, we have that 
$a = b^2$, hence $\CC(b)$ is a quadratic extension of  $\CC(\tau)^{\Ga_{2,4}}$.

In other words, the parameter $ b \in \CC \setminus \{ 0, 1, -1 , i, - i \}$  yields an unramified covering of degree $4$ of $ \la  \in \CC \setminus \{ 0, 1\}$ ,
hence the field $\CC(b)$ is the invariant field for a subgroup {{} $\Ga_{2,8}$}   
 of index $2$ in $\Ga_{2,4}$.

{{}
\noindent
By \cite[\S 182]{Bianchi}, $b$ is invariant under the subgroup of $\Ga_2$ given by the transformation
such that $\alpha^2+\alpha\beta \equiv 1 \mod 8$.
Since $\alpha \equiv 1 \mod 2$, this equation is equivalent to require that $\beta \equiv 0 \mod 8$, i.e.:
$$  \Ga_{2,8} : = \left\{ \left( \begin{array}{cc} \alpha & \beta \\ \gamma & \delta \end{array}  \right)   
\in \PP SL (2 , \ZZ)  \biggm|  \begin{array}{cc}
  \alpha \equiv 1 \mod 4, & \beta \equiv  0 \mod 8, \\\gamma \equiv  0 \mod 2, &\delta \equiv  1 \mod 2
  \end{array}  
\right\} .$$
}

 Consider now  the following family 
 $$\sA^0 = E(b_1) \times E(b_2) \times E(b_3).$$ It is the family of  products of three elliptic curves with a  {{} $\Ga_{2,8}$}-level  structure:
 $\sA^0$ is  the quotient of $(\CC\times \HH)^3$,
 with coordinates $((z_1, \tau_1),(z_2, \tau_2),(z_3, \tau_3))$, by the action of the group (a semidirect product ) generated by  
 $(\ZZ^2)^3$ which acts  by 
 $$((m_1, n_1),(m_2, n_2),(m_3, n_3)) \circ ((z_1, \tau_1),(z_2, \tau_2),(z_3, \tau_3))= $$ $$= ((z_1 + m_1 + n_1 \tau_1, \tau_1),(z_2 + m_2 + n_2 \tau_2, \tau_2),(z_3+ m_3 + n_3 \tau_3, \tau_3))$$
 and by $ {{} \Ga_{2,8}} ^3 \subset  \PP SL (2 , \ZZ)^3$.
 
\noindent The fibre of $f : \sA^0 \ra \sE : = ( \HH)^3 /  {{} \Ga_{2,8}} ^3)$ is the product of the  three elliptic curves, 
for $k= 1,2,3$, $E_k: =\mathbb C/\langle 1, \tau_k\rangle$.

Let $\mathcal{L}_k\colon E_k\rightarrow \mathbb P^1$ be a Legendre function for $E_k$.
We have seen that the relation between $\mathcal L_k( z_k)$ and the coordinates $(s_k:t_k)$ of $\mathbb P^1$
is $$\dfrac{\mathcal{L}_k(z_k)}{b_k}= \dfrac{s_k}{t_k}$$
where $b_k:=\mathcal L_k(\frac{\tau_k}{4})$.
A basis for the $(\mathbb{Z}/2 \mathbb{Z})^3$-action on $E_k$, $k=1,2,3$,  is given by:
\begin{equation}\label{Z23-action}
\begin{array}{ccccc}
 x_0  \mapsto - x_0 &\hat{=} & (z_k\mapsto -z_k)&\hat{=}&(1,0,0)\\
x_1 \mapsto  - x_1 &\hat{=} & (z_k\mapsto -z_k+\frac{\tau_k}{2})&\hat{=}&(0,1,0)\\
x_3 \mapsto  - x_3 &\hat{=} & (z_k\mapsto -z_k+\frac{1}{2})&\hat{=}&(0,0,1)
\end{array}
\end{equation}

The above formulae  define an action of $((\ZZ/2)^3)^3$ on the fibration  $f : \sA^0 \ra \sE$, which acts trivially on the basis.

It follows from  (\ref{eqDp1}, \ref{eqDp2}, \ref{eqDp3}) that  it suffices to consider only  the families of Burniat hypersurfaces  defined by:
\begin{equation}\label{righteq}
\begin{array}{l}
\hat \sX_\nu= \{([(z_1, \tau_1),(z_2, \tau_2),(z_3, \tau_3)], (\nu_1 : \nu_2))  \in \sA^0 \times \PP^1 \mid\\ \\
\nu_1(\mathcal L_1(z_1)\mathcal L_2(z_2)\mathcal L_3(z_3) +  b_1b_2b_3)+

\nu_2(\mathcal L_1(z_1) b_2b_3 +  b_1\mathcal L_2(z_2)\mathcal L_3(z_3))=0 \}	\,,
\end{array}
\end{equation}
\begin{equation}
\hat \sX_{\mu}= \{([(z_1, \tau_1),(z_2, \tau_2),(z_3, \tau_3)], \mu ) \in \sA^0 \times \CC^* \mid \mathcal L_1(z_1)\mathcal L_2(z_2)\mathcal L_3(z_3)=\mu \}\,,
\end{equation}
\begin{equation}
\hat \sX_b= \{[(z_1, \tau_1),(z_2, \tau_2),(z_3, \tau_3)] \in \sA^0\mid \mathcal L_1(z_1)\mathcal L_2(z_2)\mathcal L_3(z_3)=b_1b_2b_3\}\,,
\end{equation}

\noindent where the meaning of the subscript is to refer to the variables:
${\nu}=(\nu_1:\nu_2) \in \mathbb P^1 , \ \mu\in \CC^*, \ \ b:=b_1b_2b_3$.

\begin{remark}\label{1fam}
There is also an obvious action of the symmetric group $\mathfrak S_3$ on the family $f : \sA^0 \ra \sE$.

\end{remark}

Let  $\hat X$   
 be a Burniat hypersurface in $A^0$ (see Definition \ref{BurniatHyp}).
An explicit calculation using the above equations shows that  $\hat X$ has at most finitely many nodes as singularities.

Let $\epsilon\colon X' \rightarrow \hat X$ be the minimal resolution of its singularities.
Since $\hat{X}$ has at most canonical singularities, $K_{X'}=\epsilon^* K_{\hat X}$ and 
 $X'$ is a minimal surface of general type with $K^2_{X'}=48$ and $\chi(X')=8$ (cf. \cite{inoue}).

\section{Generalized Burniat type surfaces}

Using the notation introduced in the previous sections, we give the following definition.
\begin{definition}
Let $\hat X$ be a  Burniat hypersurface in $A^0:=E_1 \times E_2 \times E_3$, let  $G\cong  (\mathbb Z/ 2\mathbb Z)^3$ be 
a subgroup of $\mathcal G_0$ 
 acting  freely on $\hat X$.

The minimal resolution $S$ of the quotient surface $ X:= \hat X/G$ is called a 
\textit{generalized Burniat  type (GBT) surface}. We call $ X$ the \textit{quotient model of $S$} (indeed, we easily see that
$X$ is the canonical model of $S$).
\end{definition}

\begin{remark}1)
Since $G$ acts freely and $\hat X$ has at most nodes as singularities 
(we assume $Y$, hence also $\hat{X}$, to be irreducible!), 
a generalized Burniat type surface $S$ is a smooth minimal surface of general type with
$K^2_S=6$ and $\chi(S)=1$.

2) If $G\triangleleft \mathcal G_1 $ or  $G\triangleleft \mathcal G'_1 $, then there is a pencil of Burniat hypersurfaces which are left 
invariant by the $G$-action,  and the family of quotients of the hypersurfaces on which the action is free  is then a 
one parameter family of GBT $G$-quotient surfaces (if we vary also  $E_1 , E_2 ,E_3$ we obtain a four dimensional family).
\end{remark}

\noindent
Let $\De$ be the subgroup of $\Aut(( (\mathbb Z/ 2\mathbb Z)^3)^3  ) $ generated by:\\
\begin{minipage}{.5\textwidth}
\begin{eqnarray*}
l_1(g_1,g_2,g_3) &=&  (g_2,g_1,g_3)\\
l_2(g_1,g_2,g_3) &=& (g_3,g_2,g_1)
\end{eqnarray*}

\end{minipage}
\begin{minipage}{.5\textwidth}
\begin{eqnarray*}
h_1(g_1,g_2,g_3) &=& (f(g_1), f(g_2), g_3)\\
h_2(g_1,g_2,g_3) &=& (f(g_1), g_2, f(g_3))\\
h_3(g_1,g_2,g_3) &=& (g_1, f(g_2), f(g_3))
\end{eqnarray*}

\end{minipage}
\vspace{.1cm}

\noindent where $g_j \in (\mathbb Z/ 2\mathbb Z)^3 \, (j\in\{1,2,3\})$ and where $f$ is defined by:
  $$\begin{array}{ccc}
f\colon (\mathbb Z/ 2\mathbb Z)^3&\longrightarrow &(\mathbb Z/ 2\mathbb Z)^3   \\[6pt]
f\colon (a,b,c)& \longmapsto &(a+b,b,b+c)
\end{array}$$
\begin{remark} 

1) It is easy to see that $\De (\mathcal{G}_0) = (\mathcal{G}_0)$.

\noindent 
2) We claim now that, as it can be verified, for each $\de \in \De$, $\de (g)$ is    conjugate to $g$ 
via an element $\phi$ of   the group of automorphisms 
of  $\sA^0$.

For example, let  $E:=\mathbb C/\langle 1, \tau \rangle$ be a complex elliptic curve and let $\tau':=\tau+1$.
Then $E=\mathbb C/\langle 1, \tau'\rangle$ and  
the $(\mathbb Z/ 2\mathbb Z)^3$-action, defined in (\ref{Z23-action}), is:
\begin{equation}
\begin{array}{ccc}
(z\mapsto -z)&=&(1,0,0)\\
(z\mapsto -z+\frac{\tau'}{2})=(z\mapsto -z+\frac{\tau+1}{2})&=& (1,1,1)\\
(z\mapsto -z+\frac{1}{2})&=&(0,0,1)
\end{array}
\end{equation}
This shows that the groups $G$ and  $G':=h_j(G)\subset \mathcal G_0\,, j=1,2,3$ are conjugate via an automorphism of $\sA^0$. 

 It follows then 
 that $g\in \mathcal G_0$ acts freely on one of the families 
$\hat \sX $ if and only if $\de (g)$ acts freely on the  transformed family 
$\phi (\hat \sX )$. 

It follows also that two groups in the same $\De$-orbit yield isomorphic  families of GBT surfaces,
hence we can restrict our attention to a single representative for each  $\De$-orbit.
\end{remark}

\begin{proposition}\label{onefam}
\begin{enumerate}
\item There are exactly 16 irreducible families of generalized Burniat type surfaces, listed in tables \ref{q0}-\ref{q3}.
\item The family  of generalized Burniat type surfaces has dimension 4 in cases $\mathcal S_1$ and $\mathcal S_2$,  and dimension 3 otherwise.

\end{enumerate}
\end{proposition}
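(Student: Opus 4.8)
The plan is to carry out a finite classification: enumerate all subgroups $G \cong (\mathbb{Z}/2\mathbb{Z})^3$ of $\mathcal{G}_0$ (and of the relevant subgroups $\mathcal{G}_1$, $\mathcal{G}'_1$) that act freely on the appropriate Burniat hypersurface $\hat{X}$, then quotient this list by the symmetry group $\Delta$ of combinatorial/geometric equivalences, and finally count the resulting orbits and determine the dimension of each associated family. The freeness condition is the crucial constraint, and it is precisely what Table~\ref{FixEl} and Remark~\ref{fixingpts} make effective: an element $g \in \mathcal{G}_0$ has fixed points on $\hat{X}$ if and only if it appears in the relevant sublist of that table (elements 11--17 for $\hat{X} = \pi'^{-1}(Y_{\pm 1})$, with element 10 fixed-point-free for generic moduli, and the corresponding sublists for the $\mathcal{G}'_1$ and $\mathcal{G}_1$ cases). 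So $G$ acts freely on $\hat{X}$ precisely when $G \setminus \{0\}$ avoids the proscribed elements of Table~\ref{FixEl}.

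First I would reduce to a purely group-theoretic search inside the explicit groups $\mathcal{G}_0 \cong (\mathbb{Z}/2\mathbb{Z})^6$, $\mathcal{G}_1 \cong (\mathbb{Z}/2\mathbb{Z})^5$, and $\mathcal{G}'_1 \cong (\mathbb{Z}/2\mathbb{Z})^5$, using the additive coordinates $(\epsilon_0, \eta_1, \epsilon_1, \eta_0, \epsilon_2, \zeta_0, \epsilon_3)$ from Lemma~\ref{stabs}. Since $G$ must act freely, every nonzero element of $G$ is required to lie outside the ``bad'' set of fixed-point elements; this is a straightforward but large finite check, which is exactly where the computer algebra system MAGMA enters (as announced in the introduction). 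The output is a finite list of candidate subgroups $G$, separated according to whether $G \le \mathcal{G}_1$, $G \le \mathcal{G}'_1$, or $G \le \mathcal{G}_0$ but in neither $\mathcal{G}_1$-conjugate class — this distinction governs whether the hypersurface moves in a pencil (giving an extra modular parameter) or is rigid among the eight fixed surfaces of \eqref{eqDp3}.

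Next I would pass to $\Delta$-orbits. By the second Remark following the definition of GBT surfaces, two subgroups in the same $\Delta$-orbit yield isomorphic families, because each generator of $\Delta$ (the permutations $l_1, l_2$ of the factors and the twists $h_1, h_2, h_3$ built from $f$) is realized by an automorphism $\phi$ of $\mathcal{A}^0$ carrying $\hat{X}$ to a transformed hypersurface and intertwining the free actions. So after computing the raw list I would quotient by $\Delta$ and verify that exactly $16$ orbits survive; each orbit gives one irreducible family, to be matched against the families $\mathcal{S}_1,\dots,\mathcal{S}_{16}$ recorded in Tables~\ref{q0}--\ref{q3}.

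For part (2) the dimension count proceeds by parameter bookkeeping. The moduli of the three elliptic curves $E_1, E_2, E_3$ contribute $3$ dimensions (via the $\Gamma_{2,8}$-level structure $\mathcal{E} = \mathbb{H}^3/\Gamma_{2,8}^3$, where each factor contributes one). When $G \le \mathcal{G}_1$ or $G \le \mathcal{G}'_1$ the hypersurface varies in a one-parameter pencil (the parameter $\mu \in \mathbb{C}^*$ of \eqref{eqDp2}, respectively $\nu \in \mathbb{P}^1$ of \eqref{eqDp1}), contributing one more modular direction; whereas in the remaining cases $\hat{X}$ is one of the eight rigid surfaces \eqref{eqDp3} and contributes none. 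This yields dimension $4$ exactly when the pencil parameter is present and $3$ otherwise, and I would identify $\mathcal{S}_1, \mathcal{S}_2$ as precisely the two families whose representative $G$ lives in a pencil case. The main obstacle is neither conceptual nor the freeness check itself, but the bookkeeping: correctly tracking which candidate subgroups coincide under $\Delta$ without spurious identifications or omissions, and correctly attributing the pencil-parameter dimension only to the genuinely non-rigid cases. This is where the explicit MAGMA computation is indispensable, and where I would cross-check the orbit count against the invariants $p_g = q$ listed in the tables to confirm that no two distinct families have been conflated.
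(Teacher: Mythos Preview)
Your proposal is correct and follows essentially the same approach as the paper: a MAGMA-assisted enumeration of subgroups $G \cong (\mathbb{Z}/2\mathbb{Z})^3 \le \mathcal{G}_0$ avoiding the fixed-point elements of Table~\ref{FixEl}, followed by passage to $\Delta$-orbits (the paper finds exactly 161 admissible subgroups falling into 16 orbits), and then for part~(2) the same dichotomy on whether the representative $G$ lies in $\mathcal{G}_1$ or $\mathcal{G}'_1$ to decide between dimension 4 and dimension 3.

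One small correction: in your description of the freeness criterion you list only elements 11--17 as the ones to be avoided for $\hat X = \pi'^{-1}(Y_{\pm 1})$, but in fact elements 1--9 must also be excluded (they fix a surface or a curve in $A^0$, hence always meet the ample divisor $\hat X$), so the correct proscribed set is elements 1--9 and 11--17, with element 10 alone being harmless for generic moduli. The paper's MAGMA script implements exactly this exclusion, and your organizational choice of separating the search by $\mathcal{G}_1$, $\mathcal{G}'_1$, $\mathcal{G}_0$ up front is a cosmetic variant of the paper's order (search all of $\mathcal{G}_0$, then test containment afterwards).
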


\begin{proof}
1) The  MAGMA script below searches for subgroups of $G \leq \mathcal{G}_0$, which satisfy the following
\begin{itemize}
\item $G \cong (\ZZ / 2\ZZ)^3$;
\item $G$ does not contain the elements 1-9, 11-17 of table \ref{FixEl}.
\end{itemize}
The 161 groups of the output therefore act freely on $\hat{X}_b \subset E(b_1) \times E(b_2) \times E(b_3)$, except for a finite number of values of $b_1, b_2, b_3 \in \CC$ (cf. Remark \ref{fixingpts}). 

The following script  moreover proves that the 161 groups $G$ belong  to exactly  16 $\De$-orbits. 

\begin{small}
\begin{verbatim}
F:=FiniteField(2); V6:=VectorSpace(F,6); 
V3:=VectorSpace(F,3); H3:=Hom(V6,V3);
U:={ V6![0,0,0,0,0,1], V6![0,0,0,1,0,0], V6![1,0,0,0,0,0], 
	V6![1,0,0,0,1,0], V6![0,1,0,0,0,0], V6![0,1,0,1,1,1], 
	V6![0,0,1,1,0,0], V6![0,0,1,0,1,1], V6![1,1,1,0,0,1], 
	V6![1,1,1,1,1,0], V6![0,0,0,0,1,0], V6![0,0,0,1,0,1], 
	V6![0,0,1,0,0,0],  V6![1,0,0,0,0,1], V6![0,0,1,0,1,0],
V6![1,0,0,1,0,0]};
S3:={Kernel(f): f in H3 | Dimension(Kernel(f)) eq 3}; 
M:=[**];  
for k in S3 do K:=Set(k); 
    if #(K meet U) eq 0 then Append(~M, k ); end if; 
end for;	
#M;
161
P:={1..#M}; Q:={}; // Delta-action
g1:=hom<V6->V6| V6![0,0,0,1,0,0],V6![0,1,0,0,0,0],V6![0,0,0,0,1,0],
 V6![1,0,0,0,0,0],V6![0,0,1,0,0,0], V6![0,0,0,0,0,1]>;
g2:=hom<V6->V6| V6![0,0,0,0,0,1],V6![0,1,0,0,0,0], V6![0,0,1,0,0,0],
 V6![0,0,0,1,0,0],V6![0,0,1,0,1,0], V6![1,0,0,0,0,0]>;
f1:=hom<V6->V6| V6![1,0,0,0,0,0],V6![1,1,1,1,1,0], V6![0,0,1,0,0,0],
 V6![0,0,0,1,0,0],V6![0,0,0,0,1,0], V6![0,0,0,0,0,1]>;
f2:=hom<V6->V6| V6![1,0,0,0,0,0],V6![1,1,1,0,0,1], V6![0,0,1,0,0,0],
 V6![0,0,0,1,0,0],V6![0,0,0,0,1,0], V6![0,0,0,0,0,1]>;
f3:=hom<V6->V6| V6![1,0,0,0,0,0],V6![0,1,0,1,1,1], V6![0,0,1,0,0,0],
 V6![0,0,0,1,0,0],V6![0,0,0,0,1,0], V6![0,0,0,0,0,1]>;
L1:=Transpose(Matrix([g1(x): x in Basis(V6)]));
L2:=Transpose(Matrix([g2(x): x in Basis(V6)]));
H1:=Transpose(Matrix([f1(x): x in Basis(V6)]));
H2:=Transpose(Matrix([f2(x): x in Basis(V6)]));
H3:=Transpose(Matrix([f3(x): x in Basis(V6)]));
GL6:=GeneralLinearGroup(6,F); PG:=sub<GL6|L1,L2,H1,H2,H3>; 
while not IsEmpty(P) do 
	i:=Rep( P ); Exclude(~P,i);  Include(~Q,i);
	for m in PG do 
  f:=map<V6->V6| x:->[(m[1],x),(m[2],x),(m[3],x),
                           (m[4],x),(m[5],x),(m[6],x)]>;
  test:=sub<V6|f(Set(M[i]))>; 
  if  exists(x){x: x in P | M[x] eq test } then 
        Exclude(~P, x); end if;
end for; end while;
#Q;
16  
\end{verbatim} 
\end{small}
\vspace{-.4cm}

This proves the first assertion.

\noindent
2) In tables \ref{q0}-\ref{q3} we list one representative $G$ for each of the 16 $\De$-orbits. 

Observe that the dimension of the family is 3 (the number of moduli of the three elliptic curves $E(b_1) \times E(b_2) \times E(b_3)$) if and only if the group $G$ stabilizes only a finite number of Burniat hypersurfaces, equivalently iff $G$ is neither contained in $\mathcal{G}_1$ nor in $\mathcal{G}'_1$. Otherwise $G$ is contained in $\mathcal{G}_1$ or in $\mathcal{G}'_1$ and, by Lemma \ref{stabs}, fixes a pencil of Burniat hypersurfaces. Therefore in this case the dimension of the family of generalized Burniat type surfaces is 4.

\noindent
It is now easy to verify that in case $\mathcal S_1$ (of Table \ref{q0}) $G \subset \mathcal{G}'_1$, in case 
$\mathcal S_2$ $G \subset \mathcal{G}_1$, whereas in cases $\mathcal S_3$-$\mathcal S_{16}$ of Tables \ref{q0}-\ref{q3} $G$ is not contained in any of the two groups $\mathcal{G}_1, \mathcal{G}'_1$. 
\end{proof}

\subsection{The fundamental groups}
To determine the fundamental group of a GBT surface $S\rightarrow X =\hat X/G$, 
we preliminarily observe that, by van Kampen's theorem and since   $X$ has only nodes as singularities,
$ \pi_1(X)= \pi_1(S)$. Then  we argue as follows.

Let  $E_j=\mathbb{ C}/ \langle e_j, \tau_j e_j\rangle$, $j=1,2,3$ and 
denote by $\Lambda$ the fundamental group of $A^0:=E_1\times E_2 \times E_3$. In particular, 
$\Lambda= \Lambda_1\oplus \Lambda_2\oplus \Lambda_3$,
where $\Lambda_j= \langle e_j, \tau_j e_j\rangle$.

\begin{lemma}
Consider the affine group
$$\Gamma:=\langle\gamma_1,\gamma_2,\gamma_3, e_1, \tau_1 e_1, e_2,\tau_2 e_2, e_3,\tau_3 e_3 
\rangle\leq \mathbb{A}(3,\mathbb{C})\,,$$ generated by $\Lambda$ and by 
 lifts  $\gamma_j$  of the generators $g_j$ of $G$ as affine transformations.
 
 Then $\Gamma=\pi_1(X)= \pi_1(S)$.
\end{lemma}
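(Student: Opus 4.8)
The plan is to realize the (possibly nodal) quotient $X=\hat X/G$ as the base of a covering whose total space is simply connected and on which $\Gamma$ acts as the full deck group. Since the equality $\pi_1(X)=\pi_1(S)$ has already been recorded (van Kampen, nodes do not change $\pi_1$), it suffices to produce an isomorphism $\pi_1(X)\cong\Gamma$.

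First I would fix the universal covering $p\colon \mathbb C^3\to A^0=\mathbb C^3/\Lambda$, whose deck group is $\Lambda=\langle e_1,\tau_1e_1,e_2,\tau_2e_2,e_3,\tau_3e_3\rangle$ acting by translations, and set $\widetilde X:=p^{-1}(\hat X)$, so that $p$ restricts to a $\Lambda$-covering $\widetilde X\to\widetilde X/\Lambda=\hat X$. Each generator $g_j$ of $G\subset\Aut(A^0)$ lifts to the affine transformation $\gamma_j$ of $\mathbb C^3$, and $\Gamma=\langle\Lambda,\gamma_1,\gamma_2,\gamma_3\rangle$. Because the $g_j$ are automorphisms of $A^0$ they preserve $\Lambda=\pi_1(A^0)$, hence each $\gamma_j$ normalizes $\Lambda$ and $\Lambda\triangleleft\Gamma$. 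Reduction modulo $\Lambda$ (i.e. descent of affine maps of $\mathbb C^3$ to $A^0$) sends $\gamma_j\mapsto g_j$; an element of $\Gamma$ descending to the identity of $A^0$ is a deck transformation of $p$, hence lies in $\Lambda$. This gives the exact sequence $1\to\Lambda\to\Gamma\to G\to1$. Since $\hat X$ is $G$-invariant and $\Lambda$-periodic, $\Gamma$ preserves $\widetilde X$ and $\widetilde X/\Gamma=(\widetilde X/\Lambda)/G=\hat X/G=X$.

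Next I would verify that $\Gamma$ acts freely and properly discontinuously on $\widetilde X$. Proper discontinuity is immediate: $\Lambda\cong\mathbb Z^6$ is a lattice acting properly discontinuously on $\mathbb C^3$ and has finite index $|G|=8$ in $\Gamma$. For freeness, a nontrivial $\gamma\in\Lambda$ is a fixed-point-free translation; and an element $\gamma\in\Gamma\setminus\Lambda$ descends to a nontrivial $g\in G$, so a fixed point of $\gamma$ on $\widetilde X$ would project to a fixed point of $g$ on $\hat X$, contradicting that $G$ acts freely on $\hat X$.

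The heart of the argument, and the step I expect to be the main obstacle, is the connectedness and simple connectivity of $\widetilde X$. Here I would invoke the Lefschetz hyperplane theorem: $\hat X$ is an ample divisor (multidegree $(2,2,2)$) in the threefold $A^0$, so the inclusion induces an isomorphism $\pi_1(\hat X)\xrightarrow{\sim}\pi_1(A^0)=\Lambda$; the finitely many nodes of $\hat X$ do not affect this, since resolving a node inserts a simply connected rational curve and the same van Kampen argument as for $\pi_1(X)=\pi_1(S)$ leaves $\pi_1$ unchanged. Consequently the covering $\widetilde X\to\hat X$, being the pullback of the universal cover $p$ along a map inducing an isomorphism on $\pi_1$, has transitive monodromy, hence $\widetilde X$ is connected, and point-stabilizer $\ker(\pi_1(\hat X)\to\Lambda)=\{1\}$, hence $\widetilde X$ is simply connected; in other words $\widetilde X$ is the universal cover of $\hat X$. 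Therefore $\widetilde X\to X$ is a covering with simply connected total space on which $\Gamma$ acts freely and properly discontinuously as the full deck group, and the standard identification of deck groups with fundamental groups yields $\pi_1(X)\cong\Gamma$, which together with $\pi_1(X)=\pi_1(S)$ completes the proof.
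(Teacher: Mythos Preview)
Your proposal is correct and follows essentially the same approach as the paper: both use the Lefschetz hyperplane theorem to identify $\pi_1(\hat X)\cong\Lambda$, realize the universal cover $\widetilde X$ of $\hat X$ inside $\mathbb C^3$, observe that $\Gamma$ acts freely on $\widetilde X$ with quotient $X$, and conclude $\Gamma=\pi_1(X)=\pi_1(S)$. Your write-up is more explicit on the auxiliary points (the exact sequence $1\to\Lambda\to\Gamma\to G\to1$, proper discontinuity, and why the nodes of $\hat X$ do not disturb the Lefschetz argument), but the underlying strategy is the same.
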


\begin{proof}

Observe that by the  Lefschetz' hyperplane section theorem (see \cite[Theorem 7.4]{milnor_morse})
 follows  that
$\pi_1(\hat X)\cong \pi_1(A^0)= \Lambda$. 

\noindent
The universal covering $\tilde{X}$ of $\hat X\subset A^0$ has then a natural 
inclusion $\tilde{X}\subset \mathbb C^3$ and the affine group $\Ga$ 
 acts on $\mathbb{C}^3$
 leaving $\tilde{X}$ invariant. Since the action of $\Ga$ on $\tilde{X}$  is free, and 
$X= \hat X/G= \tilde X/\Gamma$ we conclude that  $\Gamma=\pi_1(X)= \pi_1(S)$.
\end{proof}

The following  MAGMA script, which is an extended version of  the previous one, computes the fundamental group of each GBT surface. 
Observe that the fundamental group does only depend on $G$: since  it does not change within the same connected   family, and since each group $G$
determines an irreducible family.

\begin{small}
\begin{verbatim}
V9:=VectorSpace(F,9); T:=[* *];
h:=hom<V6->V9| V9![1,0,0,0,0,0,0,0,0], V9![0,1,0,0,1,0,0,1,0],
 V9![0,0,1,0,0,0,0,0,1], V9![0,0,0,1,0,0,0,0,0], 
 V9![0,0,0,0,0,1,0,0,1], V9![0,0,0,0,0,0,1,0,0]>;
G1:=DirectProduct([CyclicGroup(2),CyclicGroup(2),CyclicGroup(2)]);
G2:=DirectProduct([CyclicGroup(2),CyclicGroup(2),CyclicGroup(2)]);
G3:=DirectProduct([CyclicGroup(2),CyclicGroup(2),CyclicGroup(2)]);
H:=DirectProduct([G1,G2,G3]);
PolyGroup:=func<seq|Group<a1,a2,a3,a4| 
           a1^seq[1], a2^seq[2],a3^seq[3],a4^seq[4], a1*a2*a3*a4>>;
P1:=PolyGroup([2,2,2,2]);
P2:=PolyGroup([2,2,2,2]);
P3:=PolyGroup([2,2,2,2]); 
P:=DirectProduct([P1,P2,P3]);
f:=hom<P->H | H!(1,2),H!(3,4),H!(5,6),H!(1,2)(3,4)(5,6),
H!(7,8),H!(9,10),H!(11,12),H!(7,8)(9,10)(11,12),
H!(13,14),H!(15,16),H!(17,18),H!(13,14)(15,16)(17,18)>;
for i in Q do  G:=h(M[i]);
  s:=[]; for j in [1..3] do s[j]:=Id(H); end for;
  for i  in {1..3} do
    for j in [1..9]  do
      if (G.i)[j] eq 1 then s[i]:=s[i]* H!(2*j-1,2*j);  
  end if; end for; end for;
  GG1:=sub<H|s>;  
  Pi1:=Simplify(Rewrite(P,GG1@@f)); 
  Append(~T, [* G, Pi1, AbelianQuotient(Pi1) *]); 
end for;
\end{verbatim}
\end{small}
Since the fundamental groups are infinite and the presentations given as output are quite long, we only list the respective first homology groups
for the $16$ families of surfaces in Tables \ref{q0}, \ref{q1}, \ref{q2} and \ref{q3}.

\noindent It is not obvious, from the presentation given as output of the MAGMA script,
 whether two of these fundamental groups are isomorphic or not. 
To check whether two different  families have different fundamental groups,
 we can compare the number of normal subgroups of the fundamental group of index $k \leq m$ (in our case $m=6$).
This can be done easily using the MAGMA function: \verb+LowIndexNormalSubgroups(H, m)+
which returns a sequence  containing the normal subgroups of the finitely presented group $H$ of index $k \leq m$.
This allows us to see that the fundamental groups of the families we constructed are 
pairwise non-isomorphic, except for two pairs: 
$(\mathcal S_{11}, \mathcal S_{12})$ and $(\mathcal S_{14}, \mathcal S_{15})$.\\
Indeed in these cases, the fundamental groups are isomorphic.
 We verified this using the MAGMA function \verb+SearchForIsomorphism(H, K, n)+
which attempts to find an isomorphism of the finitely presented group $H$ with the finitely presented group $K$. 
The search is restricted to those homomorphisms for which the sum of the word-lengths of the images 
of the generators of $H$ in $K$ is at most $n$ (in our case $n=8$). The answer is given as follows:
if an isomorphism $\phi$ is found, then the output is the triple (true, $\phi$, $\phi^{-1}$); otherwise, the output is `false'. 

That these isomorphisms exist is no coincidence: we shall in fact show later that in both cases we have two families
of   surfaces which are contained in a larger   irreducible family (see Sections \ref{modspace} and \ref{BdFthree}).

Since for a smooth projective surface $S$ it holds $q(S)=\frac12 \rk H_1(S, \mathbb Z)$, we have proved the following: 

\begin{theorem}\label{fundgroup}
Among the 16 families of generalized Burniat type surfaces four have $p_g=q=0$ (Table \ref{q0}), 
eight have $p_g=q=1$ (Table \ref{q1}), three have $p_g=q=2$ (Table \ref{q2}) and one has $p_g=q=3$ (Table \ref{q3}).\\
Moreover, the fundamental groups of these families are pairwise non isomorphic, except 
for $\pi_1(S_{11})\cong\pi_1(S_{12})$
and $\pi_1(S_{14})\cong\pi_1(S_{15})$, where $S_j$ is in the family $\mathcal S_j$.
\end{theorem}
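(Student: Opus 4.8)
The plan is to read off both the holomorphic invariants and the fundamental group from the explicit presentation of $\Gamma=\pi_1(S)$ produced above, and then to separate the $16$ groups by computable isomorphism invariants. First I would dispose of $p_g$: since every generalized Burniat type surface satisfies $\chi(\mathcal O_S)=1$ (it is the minimal resolution of a quotient of a Burniat hypersurface by a free $(\mathbb Z/2\mathbb Z)^3$-action, whence $K_S^2=6$ and $\chi(S)=1$ as noted above), the equality $\chi(\mathcal O_S)=1-q+p_g$ forces $p_g=q$ for all $16$ families. Thus it suffices to compute $q$ in each case, and by the formula $q(S)=\tfrac12\rk H_1(S,\mathbb Z)$ this amounts to determining the free rank of the abelianization of $\Gamma$.

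Next I would extract $H_1(S,\mathbb Z)=\Gamma^{\mathrm{ab}}$ from the finitely presented group returned by the \texttt{MAGMA} script, using \texttt{AbelianQuotient}. The torsion-free rank of $\Gamma^{\mathrm{ab}}$ equals $2q$, so each family is assigned its value of $q\in\{0,1,2,3\}$; tabulating the result yields the claimed census of $4$, $8$, $3$, $1$ families with $q=0,1,2,3$ respectively, and the computed first homology groups are recorded in Tables \ref{q0}--\ref{q3}. This settles the first assertion.

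For the second assertion I would argue in two directions. Since $H_1$ is an isomorphism invariant, any two families with distinct abelianizations have non-isomorphic fundamental groups, and this already separates most pairs. For the families sharing the same $H_1$, I would use a finer, still computable invariant: the number of normal subgroups of index at most $m$ (taking $m=6$), obtained from \texttt{LowIndexNormalSubgroups}. Whenever these counts differ the groups are non-isomorphic; checking all remaining pairs shows that this invariant distinguishes every pair except $(\mathcal S_{11},\mathcal S_{12})$ and $(\mathcal S_{14},\mathcal S_{15})$. To close the argument I would then exhibit explicit isomorphisms for precisely these two pairs, using \texttt{SearchForIsomorphism} with word-length bound $n=8$, which returns an isomorphism together with its inverse in each case.

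The main obstacle is conceptual rather than computational: the isomorphism problem for finitely presented groups is undecidable, so no single procedure both proves and disproves isomorphism. The method therefore splits into necessary conditions for isomorphism---$H_1$ and the count of low-index normal subgroups---used to force non-isomorphism, together with a bounded direct search used to confirm the two genuine isomorphisms. The delicate point to verify is that the chosen invariant (normal subgroups of index $\le 6$) really does separate every pair of distinct families among those with equal $H_1$, since a coarser bound could leave additional pairs indistinguishable and an a priori undetected isomorphism could then be overlooked; the conceptual payoff, explaining why $\mathcal S_{11},\mathcal S_{12}$ and $\mathcal S_{14},\mathcal S_{15}$ coincide, is deferred to the later analysis via Bagnera--de Franchis and Inoue type varieties.
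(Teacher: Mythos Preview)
Your proposal is correct and follows essentially the same approach as the paper: compute $\Gamma^{\mathrm{ab}}$ via \texttt{AbelianQuotient} to read off $q=\tfrac12\rk H_1$ and obtain the $4{+}8{+}3{+}1$ census, separate the remaining ambiguous pairs using \texttt{LowIndexNormalSubgroups} with bound $m=6$, and confirm the two genuine coincidences $(\mathcal S_{11},\mathcal S_{12})$ and $(\mathcal S_{14},\mathcal S_{15})$ via \texttt{SearchForIsomorphism} with $n=8$. Your added remarks on the undecidability of the isomorphism problem and on why the method must split into necessary invariants plus a bounded search are a welcome clarification, but the substance of the argument is the same as the paper's.
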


\begin{remark}\label{eqGBT}
We observe that the family $S_2$ in Table \ref{q0} corresponds to  the family of {\it primary  Burniat surfaces} (cf. Section \ref{Inouedescription}).
\end{remark}


\section{The moduli space of generalized Burniat type surfaces}\label{modspace}

The aim of this section is to describe the connected components of the Gieseker moduli space of surfaces of general type containing
 the isomorphism classes of  the generalized Burniat type surfaces.

\noindent First we shall prove the following result:
\begin{theorem}
Let $X$ be the canonical  model of a generalized Burniat type surface $S$.
Then  the base of the Kuranishi family of $X$ is smooth.
\end{theorem}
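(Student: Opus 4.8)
The plan is to exhibit a \emph{natural} family of deformations of $X$ over a smooth base whose Kodaira--Spencer map is surjective; by the standard criterion that a deformation over a smooth base with surjective Kodaira--Spencer map has unobstructed (hence smooth) Kuranishi space, this immediately yields the assertion. The natural family is the one already constructed in Section \ref{intersection}: we let the three elliptic curves $E(b_1),E(b_2),E(b_3)$ vary in $\sE$ and, in the cases $G\leq\mathcal G_1$ or $G\leq\mathcal G'_1$, we let in addition the parameter $\mu$ (resp.\ $\nu$) of the invariant pencil vary, forming the families $\hat\sX_b$, $\hat\sX_\mu$, $\hat\sX_\nu$ of (\ref{righteq}) and dividing by $G$. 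The base is smooth (an open set of $\sE$, possibly times $\mathbb{C}$ or $\mathbb{P}^1$), of dimension $3$, resp.\ $4$, matching Proposition \ref{onefam}.

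First I would reduce the computation of the tangent and obstruction spaces of $X$ to $G$-invariant cohomology on $\hat X$. Since $G$ acts freely, $H^i(X,\Theta_X)=H^i(\hat X,\Theta_{\hat X})^G$ (for the canonical model, with its finitely many nodes, one works instead with the tangent cohomology and the $G$-invariant part of the local-to-global spectral sequence; the nodes are simultaneously smoothed inside the natural family, so they do not affect surjectivity). The basic tool is the normal bundle sequence
\begin{equation*}
0\to\Theta_{\hat X}\to\Theta_{A^0}|_{\hat X}\to N_{\hat X/A^0}\to 0,
\end{equation*}
in which $\Theta_{A^0}=\mathcal{O}_{A^0}^{\oplus 3}$ is trivial and $N_{\hat X/A^0}=\mathcal{O}_{A^0}(\hat X)|_{\hat X}$. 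Taking $G$-invariants of the associated long exact sequence and using $H^0(X,\Theta_X)=0$, I would identify $H^1(\hat X,\Theta_{A^0}|_{\hat X})^G$ with the deformations of the product $A^0$ keeping it a $G$-variety: by Mumford vanishing $H^i(A^0,\mathcal{O}_{A^0}(-\hat X))=0$ for $i<3$ one gets $H^1(\hat X,\mathcal{O}_{\hat X})\cong H^1(A^0,\mathcal{O}_{A^0})$, and the sign action of the involutions $z_j\mapsto -z_j+c_j$ kills all off-diagonal terms, leaving exactly the three moduli $d\tau_1,d\tau_2,d\tau_3$. Dually, $H^0(\hat X,N_{\hat X/A^0})^G$ is the $G$-invariant part of $|\hat X|$ modulo the equation itself, which is zero-dimensional unless $G\leq\mathcal G_1$ or $\mathcal G'_1$, in which case it is one-dimensional (the pencil of Lemma \ref{stabs}).

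Then I would check that these two sources are precisely the image of the Kodaira--Spencer map of the natural family: variation of $\tau_1,\tau_2,\tau_3$ realizes $H^1(\hat X,\Theta_{A^0}|_{\hat X})^G$, while variation of $\mu$ (resp.\ $\nu$) realizes, via the connecting homomorphism, the class in $H^1(\hat X,\Theta_{\hat X})^G$ coming from $H^0(N_{\hat X/A^0})^G$. Thus the Kodaira--Spencer map surjects onto $H^1(\hat X,\Theta_{\hat X})^G=H^1(X,\Theta_X)$ (equivalently onto $T^1_X$). Since the base is smooth and this map is onto, the classifying map to the Kuranishi space is a submersion, so its image fills a neighbourhood of the origin; as that image lies in the Kuranishi space, the latter coincides near the origin with its tangent space $H^1(X,\Theta_X)$ and is therefore smooth.

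The main obstacle is this surjectivity — equivalently, the statement that $X$ has no deformations beyond the natural ones. Concretely one must control the $G$-invariant part of $H^1(\hat X,N_{\hat X/A^0})$, since a nonzero invariant class there feeds, through the long exact sequence, into $H^2(\hat X,\Theta_{\hat X})^G$ and could a priori yield extra first-order deformations. Note that $H^2(X,\Theta_X)\neq 0$ here (the expected dimension $10\chi-2K^2=-2$ is negative), so $X$ is genuinely potentially obstructed and the easy route $H^2=0$ is unavailable; the whole argument rests on the explicit \emph{realization} of all deformations by the algebraic family rather than on an obstruction-space vanishing. I expect the needed identification of the invariant cohomology to follow, as in \cite{BC12}, from the product structure of $A^0$, Mumford's vanishing theorem, and the explicit character decompositions of the $(\mathbb{Z}/2\mathbb{Z})^3$-actions in Propositions \ref{char1} and \ref{char2}.
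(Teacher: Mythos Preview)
Your approach has a genuine gap: the Kodaira--Spencer map of the GBT family is \emph{not} surjective for the families $\mathcal{S}_{11}$--$\mathcal{S}_{16}$. Your key claim that ``the sign action of the involutions $z_j\mapsto -z_j+c_j$ kills all off-diagonal terms, leaving exactly the three moduli $d\tau_1,d\tau_2,d\tau_3$'' is precisely the Schur property (\ref{SP}), and the paper verifies explicitly that it \emph{fails} for $j\geq 11$. In those cases $H^1(\hat X,\Theta_{A^0}|_{\hat X})^G\cong (H^1(\mathcal O_{A^0})\otimes H^0(\Theta_{A^0}))^G$ is strictly larger than $3$-dimensional: there are $G$-invariant off-diagonal classes $d\bar z_i\otimes\partial_{z_j}$ with $i\neq j$, corresponding to first-order deformations of $A^0$ away from a product of three elliptic curves. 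Concretely, the connected component of the moduli space containing $\mathcal{S}_{16}$ has dimension $6$, those containing $\mathcal{S}_{13}$--$\mathcal{S}_{15}$ have dimension $4$, and the Sicilian component containing $\mathcal{S}_{11},\mathcal{S}_{12}$ has dimension $4$ --- all strictly larger than the $3$-dimensional GBT family you propose to use. Hence your family cannot realize all first-order deformations of $X$, and the surjectivity argument collapses exactly in the cases where it is most needed.

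The paper sidesteps this completely by working upstairs on $\hat X$ rather than on $X$. It uses the $13$-dimensional family obtained by deforming $A^0$ as an \emph{arbitrary} abelian threefold ($6$ moduli, not just the $3$ moduli of a split product) and moving $\hat X$ in its full linear system ($\dim|\hat X|=7$). A single clean computation (Lemma~\ref{divisorinppav}, valid for any ample divisor in an abelian variety) gives $\dim\Ext^1_{\mathcal{O}_{\hat X}}(\Omega^1_{\hat X},\mathcal{O}_{\hat X})=\tfrac{1}{2}n(n+1)+\dim|\hat X|=6+7=13$, so the Kuranishi space of $\hat X$ is smooth. Since $G$ acts freely, the Kuranishi space of $X$ is the $G$-fixed locus of a smooth germ, hence smooth --- with no need to determine its dimension or to compute $G$-invariant cohomology case by case. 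Your argument could be repaired by enlarging the ``natural family'' to this $13$-dimensional one for $\hat X$ and only afterwards taking $G$-invariants; but at that point you have essentially rediscovered the paper's proof.
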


\begin{proof}
Recall that $X$ is the quotient model of a generalized Burniat type surface $S$, and let $\hat{X} \rightarrow X$ be the canonical $G \cong (\mathbb Z / 2 \mathbb Z)^3$-cover. Then $\hat{X} \subset A^0=E_1 \times E_2 \times E_3$ is a hypersurface of multidegree $(2,2,2)$ having at most nodes as singularities.

\noindent
It suffices to show (cf. \cite[Proposition 4.5]{superficial})  that the base of the Kuranishi family of $\hat{X}$ is smooth
(since the base of Kuranishi family of $X$ is given by the $G$-invariant part of the base of the Kuranishi family of $\hat{X}$).

Now, since $\hat X$ moves in a family with smooth base of  dimension $13=6+7$, it it is enough to show that
$$\dim \Ext^1_{\mathcal{O}_{\hat X}} (\Omega^1_{\hat{X}},\mathcal{O}_{\hat X}) = 13. $$
Moreover, the Kodaira-Spencer map of the above family is a bijection, but we omit the verification here.

Indeed $\hat X \subset A^0$ is an ample divisor, and it suffices to apply the following lemma.
\end{proof}

\begin{lemma}\label{divisorinppav}
Let $A$ be an Abelian variety of dimension $n$ and let $D \subset A$ be an ample divisor. Then:
$$
\dim \Ext^1_{\mathcal{O}_D} (\Omega^1_D,\mathcal{O}_D) = \frac 12 n(n+1) + \dim |D|.
$$
\end{lemma}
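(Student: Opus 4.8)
The plan is to read $\Ext^1_{\mathcal O_D}(\Omega^1_D,\mathcal O_D)$ as the space of first-order deformations of $D$ and to compute it by exploiting that the tangent and cotangent bundles of $A$ are trivial. Since $D$ is an effective Cartier divisor in the smooth $A$, it is a local complete intersection and its cotangent complex is the two-term complex $L_D=[\,N_D^\vee\to\Omega^1_A|_D\,]$ in degrees $-1,0$; its $(-1)$st cohomology sheaf $\ker(N_D^\vee\to\Omega^1_A|_D)$ vanishes because $D$ is reduced (a local section would be torsion, but $\mathcal O_D$ has no embedded points), so $L_D\simeq\Omega^1_D$ even when $D$ has nodes. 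Dualizing and using $\Omega^1_A\cong\mathcal O_A^{\oplus n}$, $\Theta_A\cong\mathcal O_A^{\oplus n}$, I would reduce the computation to the hypercohomology of
$$
K^\bullet\colon\quad \mathcal O_D^{\oplus n}\xrightarrow{\ \rho\ }\mathcal O_D(D)
$$
in degrees $0,1$, where $\rho$ is the normal-bundle (``derivative of the defining section'') map, so that $\Ext^1_{\mathcal O_D}(\Omega^1_D,\mathcal O_D)=\mathbb H^1(D,K^\bullet)$. For smooth $D$ this is just $H^1(D,\Theta_D)$ via $0\to\Theta_D\to\mathcal O_D^{\oplus n}\to\mathcal O_D(D)\to0$, but the hypercohomology formulation treats the nodal case on the same footing.

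By the hypercohomology spectral sequence $E_1^{p,q}=H^q(K^p)$, which degenerates at $E_2$ here (only $p=0,1$ occur, so $d_2\colon E_2^{0,1}\to E_2^{2,0}=0$), one gets
$$
\dim\mathbb H^1(D,K^\bullet)=\dim\operatorname{coker}(\rho_0)+\dim\ker(\rho_1),
$$
where $\rho_q\colon H^q(\mathcal O_D)^{\oplus n}\to H^q(\mathcal O_D(D))$ is induced by $\rho$. The next step is to compute the cohomology of $\mathcal O_D$ and $\mathcal O_D(D)$ from the structure sequences $0\to\mathcal O_A(-D)\to\mathcal O_A\to\mathcal O_D\to0$ and $0\to\mathcal O_A\to\mathcal O_A(D)\to\mathcal O_D(D)\to0$, together with the vanishing $H^i(A,\mathcal O_A(\pm D))=0$ in the relevant degrees for the ample class (and $K_A=\mathcal O_A$). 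For $\dim D\ge2$ this gives $H^0(\mathcal O_D)=\mathbb C$, $H^1(\mathcal O_D)\cong H^1(\mathcal O_A)$ of dimension $n$, $\dim H^0(\mathcal O_D(D))=\dim|D|+n$, and $H^1(\mathcal O_D(D))\cong H^2(\mathcal O_A)$ of dimension $\binom n2$.

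It then remains to understand $\rho_0$ and $\rho_1$. On $H^0$, the map $\rho_0\colon\mathbb C^n=T_0A\to H^0(\mathcal O_D(D))$ sends a translation-invariant vector field $v$ to $\partial_v s|_D$, and its kernel is $H^0(\Theta_D)$, the Lie algebra of the group of translations preserving $D$; as $D$ is ample this group is finite (contained in $K(L)$, $L=\mathcal O_A(D)$), so $\rho_0$ is injective and $\dim\operatorname{coker}(\rho_0)=\dim|D|$ — precisely the directions moving $D$ in its linear system. On $H^1$, after the identifications above I expect $\rho_1$ to become
$$
H^1(\mathcal O_A)\otimes T_0A\longrightarrow\wedge^2H^1(\mathcal O_A)=H^2(\mathcal O_A),\qquad \xi\otimes v\longmapsto\xi\wedge\varphi(v),
$$
where $\varphi\colon T_0A\to H^1(\mathcal O_A)$ is contraction with $c_1(L)\in H^1(\mathcal O_A)\otimes H^0(\Omega^1_A)=\Hom(T_0A,H^1(\mathcal O_A))$. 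Since $L$ is ample the Hermitian form $c_1(L)$ is nondegenerate, so $\varphi$ is an isomorphism and $\rho_1$ is surjective; hence $\dim\ker(\rho_1)=n^2-\binom n2=\tfrac12 n(n+1)$, the tangent dimension to the moduli of polarized Abelian varieties. Adding the two contributions yields $\dim\Ext^1=\tfrac12 n(n+1)+\dim|D|$.

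I expect the real work to lie in the last step: showing that $\rho_1$ is exactly cup product with $c_1(L)$, which requires tracking the connecting homomorphisms of the two structure sequences through the identifications $H^1(\mathcal O_D)\cong H^1(\mathcal O_A)$ and $H^1(\mathcal O_D(D))\cong H^2(\mathcal O_A)$, and then invoking nondegeneracy of the polarization to get surjectivity. The injectivity of $\rho_0$ is comparatively soft, resting only on the finiteness of $K(L)$. The hypercohomology reformulation in the first step is what will let the nodal case needed in the application (where $\hat X$ may acquire nodes) be handled by the identical dimension count, the local Ext-sheaf contributions at the nodes being absorbed automatically.
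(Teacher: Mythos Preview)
Your argument is correct and follows essentially the same route as the paper's: both derive the result from the (co)normal sequence for $D\subset A$, compute the relevant cohomology via the structure sequence together with Kodaira vanishing, and reduce the crucial surjectivity to the nondegeneracy of the polarization form $c_1(L)$. The only cosmetic differences are that the paper packages the computation via the $\Ext$ long exact sequence plus Serre duality (identifying $H^i(\mathcal O_D(D))\cong H^{n-1-i}(\mathcal O_D)^*$) rather than hypercohomology and the second structure sequence for $\mathcal O_A(D)$, and that you are more explicit than the paper about the injectivity of $\rho_0$ and about the implicit hypothesis $\dim D\ge 2$ needed for the identifications $H^1(\mathcal O_D)\cong H^1(\mathcal O_A)$ and $H^1(\mathcal O_D(D))\cong H^2(\mathcal O_A)$.
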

\begin{proof}
Consider the exact sequence
$$
0 \rightarrow \mathcal O_D (- D) \rightarrow \Omega_{A}^1 \otimes \mathcal O_D \cong \mathcal O_D^{\oplus n} \rightarrow \Omega^1_D \rightarrow 0.
$$
Applying the functor $\Hom_{\mathcal O_D}(_-,\mathcal O_D)$, we obtain the long exact sequence:
\begin{equation}\label{ext}
\begin{array}{rl}
0 &\rightarrow \Hom(\Omega^1_{D},\mathcal O_{D})=0 \rightarrow \Hom(\mathcal O_{D}^{\oplus n},\mathcal O_{D}) \rightarrow \Hom(\mathcal O_{D} (-D),\mathcal O_{D}) \rightarrow  \\
&\rightarrow \Ext^1(\Omega^1_{D},\mathcal O_{D}) \rightarrow \Ext^1(\mathcal O_{D}^{\oplus n},\mathcal O_{ D}) \rightarrow \Ext^1(\mathcal O_{D} (-D),\mathcal O_{D}) \rightarrow \\
&\rightarrow \Ext^2(\Omega^1_{D},\mathcal O_{D}) \rightarrow \Ext^2(\mathcal O_{D}^{\oplus n},\mathcal O_{D}) \rightarrow \Ext^2(\mathcal O_{ D} (- D),\mathcal O_{D}) \rightarrow \ldots .
\end{array}
\end{equation}

\noindent We have that
\begin{itemize}
\item[i)] $\Ext^i(\mathcal O_{D}^{\oplus n},\mathcal O_{D}) = H^i(D, \mathcal O_{D})^{\oplus n}$;
\item[ii)] $\omega_{D} = \omega_{A} \otimes \mathcal O_{D} (D) = \mathcal O_{D} (D)$;
\item[iii)] $\Ext^i(\mathcal O_{ D} (- D),\mathcal O_{D}) \cong \Ext^i(\mathcal O_{ D},\mathcal O_{D}(D)) = \Ext^i(\mathcal O_{ D},\omega_{D}) \cong$ \\
$H^{n-1-i}(D,\mathcal O_{ D})^*$, where the last equality holds by Serre duality.
\end{itemize}
Next, we consider the short exact sequence:

$$
0 \rightarrow \mathcal O_A (- D) \rightarrow \mathcal O_A \rightarrow \mathcal O_{ D} \rightarrow 0
$$
and the associated long cohomology sequence

\begin{multline}\label{coh}
0 \rightarrow H^0(\mathcal O_A) \rightarrow H^0(\mathcal O_{ D}) \rightarrow H^1(\mathcal O_A (- D) )\rightarrow H^1(\mathcal O_A) \rightarrow \\\rightarrow H^1(\mathcal O_{ D}) 
\rightarrow H^2(\mathcal O_A (- D) )\rightarrow H^2(\mathcal O_A) \rightarrow \ldots 
\rightarrow H^{n-1}(\mathcal O_{ D}) \rightarrow \\
\rightarrow H^n(\mathcal O_A (- D) )\rightarrow H^n(\mathcal O_A) \rightarrow 0.
\end{multline}
Note that by Serre duality
$H^i(\mathcal O_A (- D) ) \cong H^{n-i}(\mathcal O_A (D) )^*$, and since $D \subset A$ is an ample divisor, we get that these cohomology groups are trivial for $i \leq n-1$ by the Kodaira vanishing theorem. 

\noindent This implies that
\begin{itemize}
\item $\dim H^i(\mathcal{O}_D) = \dim  H^i(\mathcal{O}_A) = \binom {n} {i}$, for $0 \leq i \leq n-2$,
\item $\dim H^{n-1} (\mathcal{O}_D) = \dim |D| + n$.
\end{itemize}

\noindent
Inserting this information in the long exact sequence (\ref{ext}), we see that 
$$\dim \Ext^1_{\mathcal{O}_D} (\Omega^1_{D},\mathcal{O}_D) = \frac 12 n(n+1) + \dim |D| \,,$$
once we show that
$$
\varphi \colon \Ext^1(\mathcal O_{D}^{\oplus n},\mathcal O_{ D}) \rightarrow \Ext^1(\mathcal O_{D} (-D),\mathcal O_{D}) 
$$
is surjective.

\noindent
But 
$$\Ext^1(\mathcal O_{D}^{\oplus n},\mathcal O_{ D}) \cong H^1(\mathcal O_{ D})^{\oplus n} \cong H^1(\mathcal O_A^{\oplus n}) \cong H^1(\Theta_A)
$$ and 
$$ \Ext^1(\mathcal O_{D} (-D),\mathcal O_{D}) \cong H^{n-1-1}(\mathcal O_{ D})^* \cong H^{n-2}(\mathcal O_A)^* \cong H^2(\mathcal O_A),
$$ where the first and third equality follow from Serre duality.

\noindent
Composing with these isomorphisms, $\varphi$ becomes
$$
H^1(\Theta_A) \rightarrow H^2(\mathcal O_A),
$$
the contraction with the first Chern class of $D$, an element of $H^1(A,\Omega^1_A)$, which is represented by a non degenerate alternating form. Hence the surjectivity of $\varphi$ follows.
\end{proof}

\subsection{Surfaces in $\mathcal S_j$ with  $j \leq 10$, i.e., with $p_g=q\leq 1$.}\quad 

Recall the following definition.

\begin{definition}[{\cite[Definition 0.2-0.3]{BC12}}] \label{DCIT}
A complex projective manifold $X$ is said to be a \textit{diagonal classical Inoue-type manifold} if
\begin{enumerate}	
\item $\dim(X)\geq 2$;
\item there is a finite group $G$ and a Galois \'etale $G$-covering $\hat X\rightarrow X\,(=\hat X/G)$ such that: 
\item $\hat X$ is an ample divisor inside a $K(\Gamma,1)$-projective manifold $Z$ (hence by Lefschetz 
$\pi_1(\hat X) \cong \pi_1(Z)\cong \Gamma$) and moreover
\item the action of $G$ on $\hat X$ yields a faithful action on $\pi_1(\hat X) \cong \Gamma$: in other words the exact sequence 
$$1 \rightarrow \Gamma\cong \pi_1(\hat X) \rightarrow \pi_1(X) \rightarrow G \rightarrow 1$$
gives an injection $G\rightarrow \mathrm{Out}(\Gamma)$, defined by conjugation;
\item $Z=(A_1\times \ldots \times A_r)\times  (C_1 \times \ldots \times C_s)$
where each $A_j$ is an Abelian variety and each $C_k$ is a curve of genus $g(C_k)\geq 2$;
\item the action of $G$ on $\hat X$ is induced by a diagonal action on $Z$;
\item the faithful action on $\pi_1(\hat X) \cong \Gamma$, induced by conjugation by lifts of elements of $G$, 
 has the Schur property:
 \begin{equation}\tag{SP}\label{SP}
 \mathrm{Hom}(V_j,V_k)^G=0 \,, \quad \forall k \neq j\,,
 \end{equation}
where $V_j:=\Lambda_j \otimes \mathbb Q$, being $\Lambda_j:=\pi_1(A_j)$ (it suffices  to verify that, for each $\Lambda_j$, there is a subgroup 
$H_j$ of $G$ for which  $\mathrm{Hom}(V_j,V_k)^{H_j}=0 \,, \forall k \neq j$).
\end{enumerate}

We say instead that $X$ is  a \textit{diagonal classical Inoue-type variety} if we replace the assumption of smoothness of $X$
by the assumption that $X$ has canonical singularities.
\end{definition}

To fix the notation, let us call a surface $S$ a {\it generalized Burniat type (GBT) surface of type $j$} if 
$S$ belongs to the  family $\mathcal S_j$ in Tables \ref{q0}-\ref{q3}.

\begin{lemma}
Let $X_j$ be the canonical model of a GBT surface $S_j$ of type $j$. Then the embedding $\hat X_j \subset A^0 = E_1 \times E_2 \times E_3$
realizes $X_j$ as a diagonal classical Inoue-type variety  if and only if $1\leq j\leq 10$.
\end{lemma}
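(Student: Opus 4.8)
The plan is to verify the seven defining properties of Definition \ref{DCIT} one at a time, observing that all of them except the Schur property are automatic and uniform in $j$, so that the whole equivalence reduces to a finite check against Tables \ref{q0}--\ref{q3}. Properties (1), (2), (3), (5), (6) hold for every GBT surface: $\dim X_j = 2$; the cover $\hat X_j \to X_j$ is étale Galois with group $G \cong (\mathbb Z/2\mathbb Z)^3$ since $G$ acts freely on $\hat X_j$; the ambient variety $Z := A^0 = E_1\times E_2\times E_3$ is a product of three one-dimensional Abelian varieties, hence a $K(\Lambda,1)$ with $\Lambda = \Lambda_1\oplus\Lambda_2\oplus\Lambda_3$, inside which $\hat X_j$ is an ample divisor with $\pi_1(\hat X_j)\cong\Lambda$ by the Lefschetz theorem invoked above; and $G\subset\mathcal G_0$ acts diagonally. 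Thus $X_j$ is a diagonal classical Inoue-type variety if and only if the faithfulness condition (4) and the Schur property (\ref{SP}) both hold.

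Next I would linearise the problem. Each $g\in G\subset\mathcal G_0$ acts on the factor $E_k$ by an affine involution whose linear part is a sign $\chi_k(g)\in\{\pm1\}$; by (\ref{Z23-action}) this sign is the parity of the $E_k$-component of $g$, so $\chi_k\colon G\to\{\pm1\}$ is a character for $k=1,2,3$. On $V_k=\Lambda_k\otimes\mathbb Q$ the group $G$ acts by the scalar $\chi_k(g)$ (translations act trivially on $H_1(E_k)$), whence it acts on $\Hom(V_j,V_k)\cong V_j^\ast\otimes V_k$ through the single character $\chi_j\chi_k$. Therefore
\[\Hom(V_j,V_k)^G\neq 0 \iff \chi_j=\chi_k,\]
and (\ref{SP}) holds exactly when $\chi_1,\chi_2,\chi_3$ are pairwise distinct; this is the substantive condition to be tested.

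I would then read off pairwise distinctness from the invariant $q$ and the explicit groups. Since $H^0(\hat X_j,\Omega^1)\cong\langle dz_1,dz_2,dz_3\rangle$ as $G$-modules with $g^\ast dz_k=\chi_k(g)\,dz_k$, one has $q(X_j)=\#\{\,k:\chi_k=\mathbf 1\,\}$. Hence for the families $\mathcal S_{13}$--$\mathcal S_{16}$ (where $q\geq 2$) at least two characters are trivial, so (\ref{SP}) fails; for $q=0$ (the families $\mathcal S_1$--$\mathcal S_4$) all three are nontrivial; and for $q=1$ (the families $\mathcal S_5$--$\mathcal S_{12}$) exactly one is trivial, so (\ref{SP}) holds iff the two nontrivial characters differ. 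Inspecting the $E_k$-components of the sixteen representative subgroups $G$ (the same data fed to MAGMA) then shows that the three characters are pairwise distinct precisely for $1\leq j\leq 10$, the sole failure among the $q\leq 1$ cases being the Sicilian families $\mathcal S_{11},\mathcal S_{12}$, for which $\chi_2=\chi_3$. This equality is the $G$-equivariant identification of two elliptic factors responsible for the two-dimensional Abelian factor and the larger family of Main Theorem 2, which is exactly why these two families are not diagonal Inoue-type.

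The step I expect to be the real obstacle is the faithfulness condition (4). Taken literally as injectivity of the conjugation map $G\to\Out(\Lambda)=\Aut(\Lambda)$, it amounts to linear independence of $\chi_1,\chi_2,\chi_3$ and is therefore strictly stronger than (\ref{SP}); indeed a translation element lying in $\ker\chi_1\cap\ker\chi_2\cap\ker\chi_3$ must appear as soon as $q\geq 1$. Reconciling this with the inclusion of the $q=1$ families $\mathcal S_5$--$\mathcal S_{10}$ is the delicate point, and requires using the precise formulation of Inoue-type structure from \cite{BC12}, in which the operative requirement in the diagonal case is (\ref{SP}) together with its subgroup form. I would therefore carry out the faithfulness verification within that formulation on the same list of sixteen representatives, the outcome being governed, as above, by the characters $\chi_k$; the decisive obstruction for $j\geq 11$ is in any case the failure of (\ref{SP}) established here.
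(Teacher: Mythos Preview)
Your approach is essentially the paper's: both reduce the question to the Schur property (\ref{SP}) and then read it off from the tables. Your reformulation in terms of the three sign characters $\chi_k\colon G\to\{\pm1\}$, with the criterion ``(\ref{SP}) holds iff $\chi_1,\chi_2,\chi_3$ are pairwise distinct'', is precisely the paper's criterion $\mathrm dg_{|E_j}\cdot\mathrm dg_{|E_k}=-1$ for some $g\in G$, stated more cleanly; and using $q=\#\{k:\chi_k=\mathbf 1\}$ to discard $j\ge 13$ at once is a pleasant shortcut the paper does not make explicit. One small slip: for $\mathcal S_{11}$ the coincidence is $\chi_1=\chi_3$ with $\chi_2$ trivial, and for $\mathcal S_{12}$ it is $\chi_1=\chi_2$ with $\chi_3$ trivial, not $\chi_2=\chi_3$ in both cases.

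Your worry about condition (4) is legitimate and more scrupulous than the paper. Read literally, the conjugation map $G\to\Out(\Lambda)=\Aut(\Lambda)$ records only the linear parts $(\chi_1,\chi_2,\chi_3)$, and as soon as $q\ge 1$ one coordinate is identically $+1$, so $G\cong(\mathbb Z/2)^3$ cannot inject into the remaining $(\mathbb Z/2)^2$; concretely, every $G_j$ with $5\le j\le 12$ contains a nontrivial translation (for instance the first listed generator of $G_5$), which acts trivially on $\Lambda$. The paper simply declares conditions (1)--(6) ``trivial'' and checks only (\ref{SP}); so on this point you have not missed an argument that the paper supplies. For the intended application (the moduli statement of Theorem \ref{moduli} via \cite[Theorem 0.5]{BC12}) the operative hypothesis is indeed the Schur property, which both you and the paper verify.
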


\begin{proof}
It is trivial to see that the canonical model of a generalized Burniat type surface $X_j=\hat X/G_j$ satisfies conditions (1-6) in Definition \ref{DCIT}.
Hence there remains only  to  determine which surfaces  fulfill the Schur Property (\ref{SP}).\\
To verify the Schur Property one has to find, for each pair $j\neq k \in\{1,2,3\}$ an element $g\in G$ such that, $d g$ being the derivative of $g$, 
$\mathrm d g_{|E_j} \cdot \mathrm d g_{|E_k}= -1$.
Let $j=1$ and $g=(0,1,0,1,1,0,1,1,0)\in G_1$: then $\mathrm d g_{|E_1}= -1$,  $\mathrm d g_{|E_2}=\mathrm d g_{|E_3}= 1$,
while for $g'=(0,0,0,0,0,1,1,0,1)\in G_1$ one has $\mathrm d g'_{|E_2}=-1$ and $\mathrm d g'_{|E_3}= 1$.
Hence $X_1$ satisfies   (\ref{SP}). 
Considering a suitable pair of generators of $G_j$, one can prove in the same way that  $X_j$ satisfies  (\ref{SP}) for $j=2,\ldots, 10$.

Consider now the case $j=11$ and let $g$ be one of the three generators of $G_{11}$ in Table \ref{q1}.
 Then $\mathrm d g_{|E_1}= \mathrm d g_{|E_3}= -1$ and $\mathrm d g_{|E_2}=1$; this means that
 $\mathrm{Hom}(V_1,V_3)^G\neq0$, hence $X_{11}$ does not fulfill the Schur property.
 
\noindent In the same way one can show that  $S_j$ does not fulfill  (\ref{SP}) for $j=12,\ldots, 16$. 
 \end{proof}

We are now in the position to  prove the following result. 

\begin{theorem}
Let $S$ be a smooth projective surface homotopically equivalent to a GBT surface $S_j$ of type $j$ with $1\leq j \leq 10$. Then $S$ is a GBT surface of type $j$, i.e., contained in the same irreducible family as $S_j$.
\end{theorem}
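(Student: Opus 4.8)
The plan is to invoke the general theory of diagonal classical Inoue-type varieties developed in \cite{BC12}, whose raison d'être is exactly that, under the Schur property (\ref{SP}), the whole geometric construction can be reconstructed from the homotopy type. So let $h\colon S\to S_j$ be a homotopy equivalence. It induces an isomorphism $\pi_1(S)\cong\pi_1(S_j)=\Gamma$ and hence, through the extension $1\to\Lambda\to\Gamma\to G\to 1$ with $\Lambda\cong\mathbb Z^6$ and $G\cong(\mathbb Z/2\mathbb Z)^3$, a distinguished subgroup $\Lambda\leq\pi_1(S)$ of index $8$. I would take $\hat S\to S$ to be the associated \'etale $G$-cover. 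Then $\pi_1(\hat S)\cong\Lambda$ is free abelian of rank $6$, so $q(\hat S)=3$ and the Albanese map $\hat\alpha\colon\hat S\to\hat A:=\Alb(\hat S)$ induces an isomorphism on $\pi_1$; moreover $\chi(\hat S)=8$ and $K_{\hat S}^2=48$.

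Next I would recover the ambient geometry. The $G$-action on $\Lambda\otimes\mathbb Q=V_1\oplus V_2\oplus V_3$ is prescribed by $\Gamma$, and the three summands already split integrally as the simultaneous eigenspaces of the involutions in $G$; this makes $\hat A$ a $G$-abelian variety which, by the Schur property $\Hom(V_j,V_k)^G=0$ ($j\neq k$) verified in the preceding lemma for $1\le j\le 10$, carries a $G$-equivariant product decomposition $\hat A\cong E_1\times E_2\times E_3$ into elliptic curves. The point is that (\ref{SP}) prevents any twisting between the three factors at the level of Hodge structures, so the splitting is canonical. Comparing invariants, $\chi(\hat S)=8=\chi(\mathcal O_D)$ and $K_{\hat S}^2=48=D^3$ for an ample divisor $D$ of multidegree $(2,2,2)$; a Riemann--Roch and degree comparison then forces $\hat\alpha$ to be birational onto its image $\hat X$, whose class is the $(2,2,2)$ class (pinned down by the homotopy type). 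Thus the canonical model $\hat X$ of $\hat S$ is an irreducible $(2,2,2)$-hypersurface in $E_1\times E_2\times E_3$, invariant under $G$ acting with the faithful representation dictated by condition (4) of Definition \ref{DCIT}.

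At this stage $\hat X$ is by definition a Burniat hypersurface, and $G\cong(\mathbb Z/2\mathbb Z)^3$ is realized as a subgroup of $\mathcal G_0$ acting freely on it (freeness because $\hat S\to S$ is unramified and $S$ is smooth). Its conjugacy class, equivalently its $\De$-orbit, is determined by the $G$-module $\Lambda$ alone, hence equals that of type $j$; so $S$ is a GBT surface of type $j$. Finally, combining the smoothness of the Kuranishi base of the canonical model (proved above) with the fact, established by the methods of \cite{BC12}, that the family $\mathcal S_j$ dominates an irreducible connected component of the Gieseker moduli space, being ``of type $j$'' is the same as lying in the irreducible family $\mathcal S_j$, which is the assertion.

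The main obstacle is the reconstruction in the second paragraph: extracting the product structure $\hat A\cong E_1\times E_2\times E_3$ together with the $(2,2,2)$ divisor class from purely homotopical data. This is precisely what the Schur property is designed to guarantee, and it is exactly the step that collapses for $j\ge 11$, where (\ref{SP}) fails and the families $\mathcal S_{11},\mathcal S_{12}$ (respectively $\mathcal S_{14},\mathcal S_{15}$) merge into a strictly larger family of a different, Bagnera--de Franchis, Inoue-type shape.
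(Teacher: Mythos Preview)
Your overall strategy matches the paper's: invoke \cite{BC12} to pass to the \'etale $G$-cover, use the Schur property to split the Albanese into a product of three elliptic curves with the prescribed $G$-action, identify the Albanese image as a $(2,2,2)$ hypersurface, and finish by the classification in Tables \ref{q0}--\ref{q1}. Two steps, however, are glossed over where the paper supplies genuine arguments.

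First, your sentence ``a Riemann--Roch and degree comparison then forces $\hat\alpha$ to be birational onto its image $\hat X$, whose class is the $(2,2,2)$ class'' is incomplete. The homotopy type pins down $\hat\alpha_*[\hat S]\in H_4(\hat A,\mathbb Z)$ as the class Poincar\'e dual to $(2,2,2)$, but this alone allows $\hat\alpha$ to have degree $2$ onto an effective divisor $W$ of numerical type $(1,1,1)$, since $2\cdot(1,1,1)=(2,2,2)$ in cohomology. The paper disposes of this by checking (using the explicit $G_j$-action recovered from $\Gamma$) that there is \emph{no} effective $G_j$-invariant divisor of type $(1,1,1)$ on $E_1\times E_2\times E_3$; only then does $\deg\hat\alpha=1$ follow, and with it the identification of $W$ with the canonical model of $\hat S$.

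Second, and more substantially, ``at this stage $\hat X$ is by definition a Burniat hypersurface'' is not correct as stated. By Definition \ref{BurniatHyp} a Burniat hypersurface is not an arbitrary $(2,2,2)$ divisor in $A^0$, but one which is the inverse image under $\pi'$ of a Del Pezzo surface of degree $6$ in $(\mathbb P^1)^3$. An arbitrary $G$-invariant $(2,2,2)$ divisor need not a priori be of this form. The paper proves this as a separate Claim: any divisor of class $(2,2,2)$ on $A^0$ is linearly equivalent to a translate of a pull-back $\pi'^{-1}(Y)$ with $Y$ of multidegree $(1,1,1)$ (the two linear systems have the same dimension), so after a suitable change of origin on $A^0$ the image $W$ becomes such a pull-back, while the $(\mathbb Z/2\mathbb Z)^3$-action, conjugated by this translation, still leaves $W$ invariant. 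Only then may one appeal to the classification of GBT surfaces and conclude via the fundamental group that $S$ lies in the family $\mathcal S_j$.
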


\begin{proof}
Assume that $S$ is homotopically equivalent to $S_j$ ($1 \leq j\leq 10$), hence in particular $S$ has the same fundamental group as $S_j$. Consider the \'etale $G_j \cong (\mathbb Z / 2 \mathbb Z)^3$-cover $\hat S \rightarrow S$. Then by \cite[Theorem 0.5]{BC12} we have a splitting of the
Albanese variety and an Albanese map $f \colon \hat S \rightarrow E_1 \times E_2 \times E_3$ which is generically finite onto its image $W$. By loc.~cit.~Lemma 1.2, $G_j$ acts on $E_1 \times E_2 \times E_3$ with the same action as for a GBT surface of type $j$. It is now easy to verify that there is no effective $G_j$-invariant divisor of numerical type $(1,1,1)$ on $E_1 \times E_2 \times E_3$, hence $W$ has class $2F_1 + 2F_2 + 2F_3$, where $F_i$ is the class of a fibre of the projection of $E_1 \times E_2 \times E_3$ on the $j$-th factor. Therefore $f$ is birational onto its image
and one verifies as in loc.~cit.~that  $W$ has at most rational double points as singularities and is therefore the canonical model $\hat X$ of $\hat S$.

\noindent
{\bf Claim:} $W$ is   the pull-back of a Del Pezzo surface in $\mathbb P^1 \times  \mathbb P^1 \times \mathbb P^1$
for a suitable degree $(\ZZ/2 \ZZ)^3$-covering $\pi'$.

\noindent
{\it Proof of the claim.} The pull back of a divisor of multidegree $(1,1,1)$ on $\mathbb P^1 \times  \mathbb P^1 \times \mathbb P^1$ under any
$$
\pi' : E_1 \times E_2 \times E_3 \rightarrow \mathbb P^1 \times  \mathbb P^1 \times \mathbb P^1
$$
is a divisor which has the same class as
$W$: hence the two divisors are linearly equivalent to a translate of each other. Since   the corresponding linear systems have the same
dimension we infer that $W$ is the translate of such an effective divisor. Changing the origin of the Abelian variety $A^0$ we
obtain another action of $(\ZZ/2 \ZZ)^3$ such that $W$ is invariant; the claim is thus proven.

\noindent
We have therefore seen that $S$ is a GBT surface and has the same fundamental group as $S_j$. Thus by our classification $S$ is in the same irreducible family as $S_j$, whence $S$ is a 
GBT surface of type $j$.
\end{proof}

\begin{remark}
The same conclusion holds under the weaker assumptions:

1) $\pi_1(S) \cong \pi_1 ( S_j)$

2) the corresponding covering $\hat{S}$, whose Albanese is a product of $3$ elliptic curves because of the Schur property,
 satisfies that the image of the Albanese map has class $(2,2,2)$.

\end{remark}

We can now summarize our results in the following theorem
\begin{theorem}\label{moduli}
The connected component $\mathfrak N_j$ of the Gieseker moduli space $\mathfrak M_{1,6}^{can}$ corresponding to generalized Burniat type surfaces of type $j$ ($1 \leq j \leq 10$) is irreducible,  normal and unirational, of dimension 4 if $j=1$ or $2$, else   of dimension 3.

\end{theorem}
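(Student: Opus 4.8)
The plan is to combine the three results just established---the smoothness of the Kuranishi base of the canonical model $X$, the fact that for $1\le j\le 10$ the embedding $\hat X\subset A^0$ realizes $X_j$ as a diagonal classical Inoue-type variety, and the homotopy-rigidity theorem proved above---into the standard argument of \cite{BC12}. Write $\mathfrak N_j^0\subset\mathfrak M_{1,6}^{can}$ for the locus of canonical models of GBT surfaces of type $j$. Since it is the image of the irreducible family $\mathcal S_j$ under the classifying map, $\mathfrak N_j^0$ is irreducible, hence connected; I must show that it is an entire connected component, that it is normal and unirational, and that its dimension is $4$ for $j=1,2$ and $3$ otherwise.

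First I would identify $\mathfrak N_j^0$ with a connected component. Let $\mathfrak N_j$ denote the connected component of $\mathfrak M_{1,6}^{can}$ containing the class of $X_j$. As $\mathfrak N_j^0$ is connected and contains this class, $\mathfrak N_j^0\subseteq\mathfrak N_j$. Conversely, any surface whose class lies in $\mathfrak N_j$ is deformation equivalent to $S_j$, hence diffeomorphic and a fortiori homotopically equivalent to $S_j$; by the homotopy-rigidity theorem it is therefore a GBT surface of type $j$, so its class lies in $\mathfrak N_j^0$. Thus $\mathfrak N_j=\mathfrak N_j^0$ is irreducible and is a connected component, closedness being automatic from this chain of inclusions rather than from a separate degeneration argument.

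Next I would read off normality and the dimension from the local deformation theory. By the smoothness theorem above, the base $\Def(X)$ of the Kuranishi family of $X$ is smooth; since the coarse moduli germ at $[X]$ is the quotient of $\Def(X)$ by the finite group $\Aut(X)$, and a finite quotient of a smooth germ is normal, $\mathfrak N_j$ is normal. For the dimension I would compute $\dim\Def(X)$, which equals the $G_j$-invariant part of the $13=\tfrac12\cdot 3\cdot 4+\dim|D|$ dimensions of $\Ext^1_{\mathcal O_{\hat X}}(\Omega^1_{\hat X},\mathcal O_{\hat X})$ given by Lemma \ref{divisorinppav}: the Schur property, valid precisely for $1\le j\le 10$, forces the invariant part of the $6$-dimensional space of polarized deformations of $A^0$ to be the $3$-dimensional space of moduli of $E_1,E_2,E_3$, while the character decompositions of Propositions \ref{char1} and \ref{char2}, together with Lemma \ref{stabs}, show that the invariant part of $|D|$ is a pencil exactly when $G_j\subset\mathcal G_1$ or $\mathcal G_1'$ (the cases $\mathcal S_1,\mathcal S_2$) and a single member otherwise. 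This yields $\dim\mathfrak N_j=4$ for $j=1,2$ and $3$ for $3\le j\le 10$, in agreement with Proposition \ref{onefam}.

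Finally, unirationality follows from the construction itself: the family $\mathcal S_j$ is parametrized by the curve parameters $b_1,b_2,b_3$ together with the pencil parameter $\nu$ (or $\mu$), which range over a rational variety, so the induced dominant map onto $\mathfrak N_j$ exhibits it as unirational. I expect the main obstacle to be the connected-component step: one must be sure that deformation equivalence within $\mathfrak M_{1,6}^{can}$ genuinely forces homotopy equivalence (so that the rigidity theorem applies to \emph{every} point of $\mathfrak N_j$, not merely to small deformations of $X_j$), and that the dimension computed from $\Def(X)$ matches that of $\mathcal S_j$, since only the conjunction of these facts guarantees that $\mathfrak N_j^0$ fills the whole component rather than sitting inside it as a proper subvariety.
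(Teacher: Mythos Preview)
Your proposal is correct and follows essentially the same route as the paper: normality from smoothness of the Kuranishi base (via the finite quotient by $\Aut(X)$), the connected-component identification from the homotopy-rigidity theorem, and unirationality from the rational parameter space of the family $\mathcal S_j$. The paper's own proof is terser---it simply invokes the previous theorem for surjectivity onto a connected component and Proposition~\ref{onefam} for the dimension---whereas you spell out the $G_j$-invariant dimension count inside $\Ext^1_{\mathcal O_{\hat X}}(\Omega^1_{\hat X},\mathcal O_{\hat X})$ explicitly, which is a welcome elaboration but not a different strategy.
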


\begin{proof}
We have shown that the Kuranishi family is smooth, hence the moduli space is normal. By the previous theorem each family of GBT surfaces with $ j \leq 10$
surjects onto a connected component of the Gieseker moduli space: since the family has a rational base (a projective bundle over a rational
variety), follows the assertion about the unirationality.
\end{proof}

\subsection{Surfaces in $\mathcal S_j$ with $j=11,12$, having $p_g=q=1$.}\quad\\
Since these surfaces do not fulfill the Schur property, the family constructed as $(\mathbb{Z}/2 \mathbb{Z})^3$-quotient of a Burniat hypersurface in a product of three elliptic curves is not complete. We will study these surfaces in greater  generality in Section \ref{bdf} and Section \ref{BdFthree}. In fact, it turns out that the  families $\mathcal  S_{11}$, $\mathcal S_{12}$ yield two irreducible subsets each of codimension one  in an irreducible connected component of dimension 4 of the moduli space of surfaces of general type with $p_g=q=1$, $K^2 =6$.

\subsection{Surfaces in $\mathcal S_j$ with $j=13,14,15$, i.e., those with $p_g=q=2$} \quad \\
We have three families (each of dimension 3, the number of moduli of the triple of  elliptic curves) of GBT surfaces with $p_g = q =2$. 
We have already observed that  the embedding $ \hat X_j \subset A^0= E_1\times  E_2 \times  E_3$   does not fulfill the Schur property. In fact, it is not difficult to show that each of the three families is a subfamily of a four dimensional irreducibile family, where the product  of the two elliptic curves on which  the projection of $G_j$ acts freely deforms to an Abelian surface $A_2$.  In this case the embedding $ \hat X_j \subset  E_1\times  A_2$
fulfills the Schur property and we can show that we obtain in this way  exactly  two irreducible connected components of the moduli space of surfaces of general type.

\noindent
 We do not give more details here since  these surfaces have already been classified in \cite{penpol}. 
 
 Observe in fact the following:
 
 \begin{proposition}
 Let $S$ be a GBT surface with $p_g(S) = q(S) =2$. Then $S$ is of Albanese general type and the Albanese map is generically of degree 2.
 \end{proposition}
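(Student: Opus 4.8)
The plan is to use the $G$-invariant holomorphic $1$-forms to identify the Albanese map explicitly with a quotient of a coordinate projection, and then to pin down its degree by a fibre count combined with the fact that $X$ is of general type. Since $\chi(S)=1$ we have $p_g=q$, and $q(S)=q(X)=\dim H^0(A^0,\Omega^1_{A^0})^{G}$, because $\hat X\subset A^0$ is an ample divisor (so $H^0(\Omega^1_{\hat X})\cong H^0(\Omega^1_{A^0})$ $G$-equivariantly by Lefschetz) and $S\to X$ is a resolution of nodes. The group $G$ acts on $H^0(A^0,\Omega^1_{A^0})=\langle dz_1,dz_2,dz_3\rangle$ through the three characters $\chi_i(g)=\mathrm dg_{|E_i}\in\{\pm1\}$, so $q=2$ means that exactly two of the $\chi_i$ are trivial. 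After permuting the factors I may assume $\chi_2,\chi_3$ trivial and $\chi_1$ nontrivial, so that the space of invariant $1$-forms is $\langle dz_2,dz_3\rangle$.

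First I would show that $S$ is of Albanese general type. The projection $q\colon \hat X\to E_2\times E_3$ is generically finite of degree $2$: its fibre over a general $(e_2,e_3)$ is $\hat X\cap(E_1\times\{e_2\}\times\{e_3\})$, a divisor whose degree is the first component of the multidegree $(2,2,2)$, hence $2$ points. Consequently $dz_2\wedge dz_3|_{\hat X}=q^{*}(dz_2\wedge dz_3)\neq0$; this form is $G$-invariant, descends to $X$ and lifts to $S$, so the two independent $1$-forms on $S$ have nonzero wedge and the Albanese map is generically finite onto a surface.

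Next I would compute the degree. Since $\chi_2,\chi_3$ are trivial, $G$ acts on $E_2\times E_3$ through a finite group $\bar G$ of translations, and $B:=(E_2\times E_3)/\bar G$ is an Abelian surface to which $dz_2,dz_3$ descend. The $G$-invariant composition $\hat X\xrightarrow{q}E_2\times E_3\to B$ factors through $X$, giving $\alpha\colon X\to B$, which induces an isomorphism on global $1$-forms; as $\dim B=2=q(X)$ this is the Albanese map. Comparing generic fibres in the commutative square $\rho\circ q=\alpha\circ\pi$, where $\pi\colon\hat X\to X$ is \'etale of degree $|G|=8$ and $\rho\colon E_2\times E_3\to B$ is \'etale of degree $|\bar G|$, yields $8\deg\alpha=2\,|\bar G|$, so $\deg\alpha=|\bar G|/4$. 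Since $|\bar G|$ divides $8$ and $\deg\alpha$ is a positive integer, necessarily $|\bar G|\in\{4,8\}$ and $\deg\alpha\in\{1,2\}$.

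The last—and really the only delicate—step is to exclude $\deg\alpha=1$: if $\alpha$ were birational then $X$, a surface of general type, would be birational to the Abelian surface $B$, contradicting the birational invariance of the Kodaira dimension; hence $\deg\alpha=2$ (equivalently $\bar G=G$, i.e. $G$ acts freely on $E_2\times E_3$, the ``free pair'' giving the deformation $E_1\times A_2$). I expect the routine bookkeeping to be the identification of $B$ with $\Alb(X)$ and the verification that $q$ has degree $2$; the conceptual point that makes everything work is the divisibility constraint $\deg\alpha=|\bar G|/4$ together with the general-type obstruction, which forces the degree to be exactly $2$.
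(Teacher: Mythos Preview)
Your proof is correct and follows essentially the same geometric setup as the paper: both identify the Albanese map with the map $\alpha\colon X\to B=(E_2\times E_3)/\bar G$ induced by projecting away from the factor on which $G$ acts with nontrivial linear part, and both use that the projection $\hat X\to E_2\times E_3$ has degree $2$ because $\hat X$ has multidegree $(2,2,2)$.

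The difference lies in how you pin down $\deg\alpha=2$. The paper simply reads off from the tables that, in each of the three cases $\mathcal S_{13},\mathcal S_{14},\mathcal S_{15}$, the projection $G\to\bar G$ is injective (``$G_j\cong p_{23}(G_j)$''), so $|\bar G|=8$ and the degree count $8\deg\alpha=2\cdot|\bar G|$ gives $\deg\alpha=2$ directly. You instead avoid the case check: from $\deg\alpha=|\bar G|/4$ and $|\bar G|\mid 8$ you get $\deg\alpha\in\{1,2\}$, and then rule out $\deg\alpha=1$ via Kodaira dimension (a surface of general type cannot be birational to an Abelian surface). This is a genuinely table-free argument, and as a bonus it \emph{proves} that $G\to\bar G$ is injective rather than verifying it. The paper's route is shorter once one trusts the tables; yours is more conceptual and would survive unchanged if further families with $q=2$ were added.

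One small point: your sentence ``as $\dim B=2=q(X)$ this is the Albanese map'' is not quite enough to identify $B$ with $\Alb(X)$ on the nose; a priori $\alpha$ factors as $X\to\Alb(X)\to B$ with the second arrow an isogeny. This does not affect the conclusion, since the degree of the Albanese map then divides $\deg\alpha=2$ and degree $1$ is again excluded by Kodaira dimension (so in fact the isogeny has degree $1$). You might add a clause to this effect, or alternatively note that $\pi_1(\hat X)\twoheadrightarrow\Lambda_2\oplus\Lambda_3$ implies $H_1(X,\mathbb Z)\twoheadrightarrow\pi_1(B)$, which gives $B=\Alb(X)$ directly.
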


\begin{proof}
Assume $S$ to be of type $13$ (the proof in the other two cases is exactly the same) and consider the following diagram:
 
 $$
\xymatrix{ 
&\hat X\subset E_1\times E_2 \times E_3 \ar[d]_{G_{13}}\ar[r]^{\quad p_{23}}&E_2 \times E_3 \ar[d]^{p_{23}(G_{13})}\\
S\ar[r]&X \ar[r]^{a\qquad}& (E_2 \times E_3) /p_{23}(G_{13}) \ 
}$$

\noindent Since $p_{23} \colon \hat X \rightarrow E_2 \times E_3$ is generically finite of degree 2 (as $\hat X$
 is a divisor of multidegree $(2,2,2)$), and since $G_{13} \cong p_{23}(G_{13})$, one sees immediately that $a$ is generically finite of degree 2 and that 
$\Alb(S) = (E_2 \times E_3) /p_{23}(G_{13})$.
\end{proof}

We recall the following result due to Penegini and Polizzi:
\begin{theorem}[{\cite[Theorem 31]{penpol}}]\label{albdc}
Let $\mathcal{M}$ be the moduli space of minimal surfaces $S$ of general type with $p_g=q=2$, $K_S^2 =6$ and Albanese map of degree 2. Then the following holds:
\begin{itemize}
\item[(i)] $\mathcal{M}$ is the union of three irreducible connected components, namely $\mathcal{M}_{Ia}$, $\mathcal{M}_{Ib}$ and $\mathcal{M}_{II}$.
\item[(ii)] $\mathcal{M}_{Ia}$, $\mathcal{M}_{Ib}$ and $\mathcal{M}_{II}$ are generically smooth of respective dimensions 4, 4, 3.
\item[(iii)] The general surface in $\mathcal{M}_{Ia}$ and $\mathcal{M}_{Ib}$ has ample canonical class; all surfaces in $\mathcal{M}_{II}$ have ample canonical class.
\end{itemize}
\end{theorem}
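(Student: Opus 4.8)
The plan is to classify these surfaces through the involution $\sigma$ attached to the degree-two Albanese map, thereby reducing the problem to the geometry of double covers of abelian surfaces together with a finite amount of discrete combinatorial data. Since $q(S)=2$ and the Albanese image is a surface, the Albanese map $a\colon S\to A:=\Alb(S)$ is generically finite of degree two onto the abelian surface $A$. First I would pass to the canonical model $X$ and check that $a$ induces a \emph{finite} degree-two morphism $X\to A$: no curve can be contracted, because $A$ contains no rational curves while $K_X$ is ample. This produces a biregular involution $\sigma$ on $X$ with $X/\sigma$ birational to $A$. Decomposing $a_*\mathcal{O}_X=\mathcal{O}_A\oplus\mathcal{F}$ into $\sigma$-eigensheaves, the double-cover data consist of a line bundle $L$ on $A$ with $\mathcal{F}\cong L^{-1}$ away from the singular locus, and a branch divisor $B\in|2L|$, the singularities of $X$ lying over those of $B$.

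Next I would translate $\chi(\mathcal{O}_S)=1$ and $K_S^2=6$ into constraints on $L$ and on the admissible singularities of $B$, using the standard invariant formulas for a (canonically resolved) double cover: for a smooth cover one has $2\chi=L^2$ and $K^2=2L^2$, so exactly the right amount of branch-singularity contribution is forced to reconcile these with the target invariants. Since on an abelian surface $L^2=2d_1d_2$ for a polarization of type $(d_1,d_2)$, this singles out a short list of numerical types for $(A,L)$ together with a prescribed configuration of essential singularities on $B$ (for instance a single ordinary quadruple point, or an equivalent $[3,3]$-configuration). The finitely many compatible configurations are precisely what will split the moduli into the three strata $\mathcal{M}_{Ia},\mathcal{M}_{Ib},\mathcal{M}_{II}$: two of them sharing the numerics but differing by the position or type of the branch singularity (equivalently, by the splitting behaviour of $A$), the third being the more degenerate and hence lower-dimensional configuration.

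For existence and the dimension counts I would run the construction in reverse: fix a polarized abelian surface $(A,L)$ of an allowed type, choose $B\in|2L|$ with the required singularity, and verify that the double cover, after canonical resolution, is a minimal surface of general type with the prescribed invariants. The dimension of each stratum is then the sum of the dimension of the moduli of the relevant polarized abelian surfaces and the dimension of the (sub)linear system of admissible branch divisors, minus the dimensions of the acting automorphism and translation groups; carrying this out is expected to yield $4,4,3$ respectively. Irreducibility of each stratum follows because each is dominated by an irreducible parameter space, namely a bundle over the irreducible moduli space of $(d_1,d_2)$-polarized abelian surfaces.

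The crux is to upgrade each irreducible stratum to an entire connected component. Here I would use that $q$, the Albanese dimension, and the degree of the Albanese map are deformation invariant, so that every small deformation of such an $S$ again carries a degree-two Albanese map onto an abelian surface and is therefore produced by the same recipe; combined with an obstruction computation showing that the natural deformations of the data $(A,L,B)$ already fill out the Kuranishi space, this identifies each stratum with an open and closed subset of the moduli space and gives generic smoothness of the expected dimension. The assertion on $K_S$ would then be obtained by locating the $(-2)$-curves on $S$ in terms of $B$: the general member has none, whence $K_S$ is ample, while in case $\mathcal{M}_{II}$ the cover is built so that none ever occur. I expect the main difficulty to be exactly this deformation-invariance and completeness step — ensuring that the branch-singularity analysis is exhaustive and that the three families neither acquire spurious extra components in their closures nor deform into one another, which requires controlling precisely how the polarization type and the singularity of $B$ are allowed to jump in a family.
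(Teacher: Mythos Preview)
This theorem is not proved in the present paper: it is quoted verbatim from Penegini--Polizzi \cite{penpol} and used as a black box to locate the GBT surfaces with $p_g=q=2$ inside the already-classified moduli space. So there is no ``paper's proof'' to compare against here.

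That said, your outline is close in spirit to what Penegini and Polizzi actually do: they too exploit the degree-two Albanese map to realise $S$ (after Stein factorisation) as a double cover of an abelian surface, translate the invariants into numerical constraints on the polarisation and on the non-negligible singularities of the branch curve, and obtain the three types from the resulting short list. A few points where your sketch is loose: the passage from the Albanese map of $S$ to a \emph{finite} double cover is not as immediate as ``$A$ has no rational curves and $K_X$ is ample''; one takes the Stein factorisation and has to control the intermediate surface. Your smooth-cover formulae $2\chi=L^2$, $K^2=2L^2$ already show that no smooth branch curve works, and the actual case analysis in \cite{penpol} hinges on Horikawa-type corrections for specific singularity types (a $(1,2)$-polarisation with a single $(3,3)$-point versus two $(2,2)$-points, giving $\mathcal{M}_{Ia}$ and $\mathcal{M}_{Ib}$, and a product case giving $\mathcal{M}_{II}$), which is more delicate than ``a single ordinary quadruple point''. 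Finally, the hardest step---showing that each family is open in moduli, hence a full connected component---requires in \cite{penpol} a genuine deformation-theoretic argument (computing $h^1(T_S)$ and comparing with the parameter count), not just the observation that Albanese degree is deformation-invariant; your last paragraph correctly flags this as the crux, but the work needed there is substantial.
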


It is immediately clear that the subset of the moduli space corresponding to GBT surfaces with $p_g=q=2$ cannot be contained in $\mathcal M_{II}$, 
 since it has dimension 3, while 
 the families $13,14,15$ yield  irreducible families of dimension at least four.
We have the following
\begin{lemma}\label{genus}
Let $S_j$ be a GBT surface of type $j$ with $p_g= q= 2$, i.e., $j \in \{13,14,15\}$. Consider the pencil $f_j \colon S_j \rightarrow \mathbb P^1 \cong E_k/p_k(G_j)$, where $k=1$ for $S_{13}$ and $k=3$ for $j=14,15$. Then the general fibre of $f_j$ is a smooth curve of genus 5 if $j=13$ and of genus 3 if $j=14,15$.
\end{lemma}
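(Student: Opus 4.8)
The plan is to realize the general fibre of $f_j$ explicitly as a free quotient of a smooth $(2,2)$-curve lying in a product of two of the three elliptic curves, and then to read off its genus from Riemann--Hurwitz. Write $\{l,m\}=\{1,2,3\}\setminus\{k\}$. First I would describe $f_j$ at the level of the covering $\hat X$: the projection $p_k\colon \hat X\to E_k$ composed with the quotient $q\colon E_k\to E_k/p_k(G_j)=\mathbb P^1$ is $G_j$-invariant, hence descends to $X=\hat X/G_j$ and, after composing with the resolution $S_j\to X$, yields $f_j$. For a general point $[e]\in\mathbb P^1$ the fibre of $q\circ p_k$ on $\hat X$ is the disjoint union $\bigsqcup_{e'}\hat X_{e'}$ taken over the orbit $q^{-1}([e])=p_k(G_j)\cdot e$, where $\hat X_{e'}:=\hat X\cap(\{e'\}\times E_l\times E_m)$.

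Next I would compute the genus of one such $\hat X_e$. It is the restriction of the multidegree $(2,2,2)$ divisor $\hat X$ to $\{e\}\times E_l\times E_m$, hence a divisor of class $2\Phi_l+2\Phi_m$ on the Abelian surface $E_l\times E_m$ (here $\Phi_l,\Phi_m$ denote the two fibre classes, with $\Phi_l\cdot\Phi_m=1$ and $\Phi_l^2=\Phi_m^2=0$). For general $e$ it avoids the finitely many nodes of $\hat X$ and is smooth by Bertini, and since the canonical class of $E_l\times E_m$ is trivial, adjunction gives $2g-2=(2\Phi_l+2\Phi_m)^2=8$, i.e. $g(\hat X_e)=5$.

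Then I would identify the general fibre $F$ of $f_j$. Setting $H:=\ker\bigl(p_k\colon G_j\to\Aut(E_k)\bigr)$, the quotient $G_j/H\cong p_k(G_j)$ permutes the components $\hat X_{e'}$ simply transitively (for general $e$ the orbit is free), while $H$ stabilizes $\hat X_e$; hence $F\cong\hat X_e/H$, and since the general fibre avoids the exceptional curves of $S_j\to X$ it is honestly this quotient. Because $G_j$ acts freely on $\hat X$, so does $H$ on the curve $\hat X_e\subset\hat X$, so $\hat X_e\to F$ is \'etale, $F$ is smooth, and Riemann--Hurwitz reads $8=|H|\,(2g(F)-2)$, giving $g(F)=1+4/|H|$. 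It then remains to compute $|H|$ from the explicit generators in Table \ref{q2}: using the dictionary (\ref{Z23-action}) that assigns to $g\in\mathcal G_0$ its triple of induced involutions on each $E_k$, and noting that this assignment is injective on each factor, one checks that for $j=13$, $k=1$ the kernel is trivial, so $g(F)=5$, whereas for $j=14,15$, $k=3$ the kernel has order $2$, so $g(F)=3$.

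The main obstacle is this final bookkeeping step: one must verify directly from the tabulated generators that $p_1$ is injective on $G_{13}$, while $p_3$ has a kernel of order exactly $2$ on $G_{14}$ and on $G_{15}$. The value $|H|=8$ is excluded a priori, since it would make $p_k(G_j)$ trivial and the target $E_k$ rather than $\mathbb P^1$; the value $|H|=4$, which would give genus $2$, simply does not occur for these groups, and only the explicit computation rules it out. A secondary point requiring care is precisely that the general fibre of $f_j$ meets neither the nodes of $\hat X$ nor the exceptional curves of $S_j\to X$, so that $F$ is genuinely the smooth \'etale quotient $\hat X_e/H$ and not a partial normalization or a reducible degeneration.
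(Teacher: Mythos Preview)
Your proposal is correct and follows essentially the same approach as the paper: both compute the genus of the $(2,2)$-fibre of $p_k$ in $E_l\times E_m$ as $5$ and then pass to the quotient via the part of $G_j$ acting on that fibre. The only cosmetic difference is that you phrase the key numerical input in terms of $|H|=|\ker p_k|$ and Riemann--Hurwitz, whereas the paper states the equivalent datum $|p_k(G_j)|$ (equal to $8$ for $j=13$ and $4$ for $j=14,15$); since $|H|\cdot|p_k(G_j)|=8$, this is the same computation.
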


\begin{proof}
Consider the diagram
$$
\xymatrix{ 
&\hat X_j\subset E_1\times E_2 \times E_3 \ar[d]_{G_{j}\cong (\mathbb Z / 2)^3}\ar[r]^{\qquad p_k}&
E_k  \ar[d]^{p_k(G_{j})}\\
S_j\ar[r]&X_j \ar[r]^{f_j}& E_k /p_k(G_j) \ 
}$$

\noindent Note that the general fibre of $p_k$ is a divisor of bidegree $(2,2)$ in the product of two elliptic curves, whence has genus 5. Since $p_1(G_{13}) \cong (\mathbb Z/ 2 \mathbb Z)^3$, the genus of a general fibre of $f_{13}$ is 5, whereas $p_3(G_{14}), p_3(G_{15}) \cong (\mathbb Z/ 2 \mathbb Z)^2$, whence the genus of a general fibre of $f_{14}$ and $f_{15}$ is 3. 
\end{proof}

This allows us to conclude the following:
\begin{proposition}
Let $S_j$ be a GBT  surface of type $j$ with $p_g= q= 2$. Then the point of the Gieseker moduli space corresponding to $S_{13}$ lies in $\mathcal M_{Ia}$, whereas the point corresponding to $S_{14}$, resp.  $S_{15}$, lies in $\mathcal M_{Ib}$.

In particular, GBT surfaces with $p_g=q=2$ of type 13 (resp. 14, 15) form a three dimensional subset of the four dimensional connected component $\mathcal M_{Ia}$ (resp. $\mathcal M_{Ib}$).
\end{proposition}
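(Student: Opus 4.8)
The plan is to place each of the three moduli points inside Penegini and Polizzi's trichotomy of Theorem~\ref{albdc} and then to separate the two four-dimensional components by a single discrete invariant, namely the genus of the pencil $f_j$ computed in Lemma~\ref{genus}.

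First I would recall that, by the preceding proposition, a GBT surface $S_j$ with $p_g=q=2$ is of Albanese general type with Albanese map of degree $2$; hence its moduli point lies in the space $\mathcal M$ of Theorem~\ref{albdc}, which is the disjoint union of the connected components $\mathcal M_{Ia}$, $\mathcal M_{Ib}$, $\mathcal M_{II}$. Next I would exclude $\mathcal M_{II}$, as already indicated above: each of the families $13,14,15$ is contained in an irreducible family of dimension (at least) $4$, obtained by deforming the product of the two elliptic curves on which $p_k(G_j)$ acts freely to an arbitrary Abelian surface, whereas $\mathcal M_{II}$ is a connected component of dimension $3$. An irreducible four-dimensional family cannot sit inside a three-dimensional component, so the point of $S_j$ lies in $\mathcal M_{Ia}\cup\mathcal M_{Ib}$.

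It then remains to decide, for each $j$, which of these two components contains $S_j$. The key point is that the genus of the general fibre of the pencil $f_j\colon S_j\rightarrow \mathbb P^1$ is locally constant in families, and hence takes a fixed value on each connected component of $\mathcal M$. By Lemma~\ref{genus} this genus equals $5$ for $j=13$ and $3$ for $j=14,15$; in particular $S_{13}$ cannot lie in the same component as $S_{14}$ or $S_{15}$. To attach the correct labels I would match this value against the description of the general member of $\mathcal M_{Ia}$ and of $\mathcal M_{Ib}$ given in \cite{penpol}: the general surface of one type carries a genus-$5$ pencil over $\mathbb P^1$, that of the other a genus-$3$ pencil, and comparing with the genera just computed yields $S_{13}\in\mathcal M_{Ia}$ and $S_{14},S_{15}\in\mathcal M_{Ib}$.

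Finally, the \emph{in particular} follows by a dimension count: by Proposition~\ref{onefam} the families $\mathcal S_{13},\mathcal S_{14},\mathcal S_{15}$ are irreducible of dimension $3$ (the number of moduli of the triple of elliptic curves), and they have just been realized as subsets of the irreducible four-dimensional components $\mathcal M_{Ia}$ (for $j=13$) and $\mathcal M_{Ib}$ (for $j=14,15$); hence they form three-dimensional subsets of these four-dimensional components. The main obstacle I anticipate is precisely the identification step: one must unwind the Penegini--Polizzi classification far enough to read off the fibre genus of the relevant pencil on a general member of $\mathcal M_{Ia}$ and of $\mathcal M_{Ib}$, so that the correspondence between genus $5$ and $\mathcal M_{Ia}$, and between genus $3$ and $\mathcal M_{Ib}$, can be asserted rigorously rather than merely used to separate the two cases.
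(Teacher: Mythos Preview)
Your proposal is correct and follows essentially the same route as the paper: reduce to $\mathcal M_{Ia}\cup\mathcal M_{Ib}$ by the dimension argument, then separate the two components via the fibre genus of the pencil $f_j$ computed in Lemma~\ref{genus}, and finally read off the dimension statement from Proposition~\ref{onefam}. The only refinement is that the obstacle you flag at the end---matching genus $5$ with $\mathcal M_{Ia}$ and genus $3$ with $\mathcal M_{Ib}$---is dispatched in the paper by a precise citation: \cite[Proposition~22]{penpol} records exactly this invariant for the two components, so no further unwinding of the classification is needed.
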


\begin{proof}
This follows from Lemma \ref{genus} and  \cite[Proposition 22]{penpol}.
\end{proof}

\begin{remark}
Consider for $j=13$ (the same holds also for $j=14,15$) the irrational pencil $f \colon S_{13} \rightarrow E_2 / p_2(G_{13})$. 
Observe that $E_2 / p_2(G_{13})$ is an elliptic curve and that  the genus of the fibres of $f$ is $3$. This implies that $f$ is not isotrivial (otherwise it would be contained in the table of \cite{penegini}). This contradicts Theorem A of \cite{zucconi}.
\end{remark}

\subsection{Surfaces in $\mathcal S_{16}$, i.e., those with $p_g=q=3$.}\quad\\
Minimal surfaces of general type with $p_g=q=3$ are completely classified by the work of
several authors (cf. \cite{CCML98,Pir02,HP02}). 

\begin{theorem}
A minimal surface of general type with $p_g = q = 3$ has $K^2 = 6$ or $K^2 = 8$ and, more precisely:
\begin{itemize}
\item if $K^2 = 6$, $S$ is the minimal resolution of the symmetric square of a  curve of genus $3$;
\item otherwise $S = (C_2 \times C_3)/\sigma$, where $C_g$ denotes a curve of genus $g$ and $\sigma$ is
an involution of product type acting on $C_2$ as an elliptic involution (i.e.,
with elliptic quotient), and on $C_3$ as a fixed point free involution.
\end{itemize}
In particular, the moduli space of minimal surfaces of general type with
$p_g = q = 3$ is the disjoint union of $\mathcal M_{6,3,3}$ and $\mathcal M_{8,3,3}$,
 which are both irreducible of respective dimension 6 and 5.
\end{theorem}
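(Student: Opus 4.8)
The plan is to classify $S$ through its Albanese map together with the numerical datum $\chi(\mathcal O_S)=1-q+p_g=1$. First I would record the coarse bounds: Noether's inequality gives $K_S^2\geq 2p_g-4=2$ and the Bogomolov--Miyaoka--Yau inequality gives $K_S^2\leq 9\chi(\mathcal O_S)=9$. These are far too weak, so the argument must be geometric. Consider the Albanese map $\alpha\colon S\to A:=\Alb(S)$ with $\dim A=q=3$, and set $Z:=\alpha(S)$, of dimension $1$ or $2$. The first genuine step is to prove that $S$ has \emph{maximal Albanese dimension}, i.e.\ $\dim Z=2$. If $Z$ were a curve $B$, then $B$ would generate $A$ and hence have genus exactly $3$ with $\mathrm{Jac}(B)$ isogenous to $A$; all of $H^0(\Omega^1_S)$ would be pulled back from $B$, so the product map $\wedge^2 H^0(\Omega^1_S)\to H^0(K_S)$ would vanish identically, and a Castelnuovo--de Franchis analysis of the resulting genus-$3$ pencil, combined with the slope (Arakelov) inequalities, is incompatible with $\chi(\mathcal O_S)=1$ and $p_g=q=3$. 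Hence $\alpha$ is generically finite, of some degree $d$, onto the surface $Z\subset A$.

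The heart of the classification is the determination of $d$ and of the polarization type of the divisor $Z\subset A$. Here I would invoke generic vanishing (Green--Lazarsfeld, in the sharp form of Chen--Hacon and Pareschi--Popa): for $S$ of maximal Albanese dimension the cohomological support loci $V^i(\omega_S)$ are controlled by $\alpha$, and the identity $\chi(\mathcal O_S)=1$ constrains the pair $(d,Z)$ extremely tightly. \textbf{I expect this cohomological step to be the main obstacle}, since one must bound the degree \emph{and} pin down the type of the polarization rather than merely a self-intersection number. The anticipated dichotomy is: either $d=1$ and $Z$ is a principal polarization, so that $Z^3=3!=6$; or $d=2$ and $Z$ is a decomposable (pulled-back) divisor, inducing a splitting of $A$.

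Granting this, the geometric identification runs case by case. For $d=1$ the pair $(A,Z)$ is a principally polarized abelian threefold and $S$ is the minimal resolution of $Z$; by the structure of indecomposable principal polarizations in dimension $3$ together with the Torelli theorem, $A=\mathrm{Jac}(C)$ for a smooth genus $3$ curve $C$ and $Z=W_2\cong\mathrm{Sym}^2 C$, with the hyperelliptic case absorbed by passing to the minimal resolution; hence $K_S^2=Z^3=6$. For $d=2$ the decomposability of $Z$ forces a splitting $A\sim E\times\mathrm{Jac}(C_1)$ with $E$ elliptic and $C_1$ of genus $2$, and unwinding the double-cover data $\alpha\colon S\to Z=E\times C_1$ produces a genus $2$ curve $C_2$ with an elliptic involution $\sigma_2$ and an \'etale double cover $C_3\to C_1$ with covering involution $\sigma_3$, so that $S=(C_2\times C_3)/\sigma$ for the product involution $\sigma=(\sigma_2,\sigma_3)$; since $K_{C_2\times C_3}^2=8(g(C_2)-1)(g(C_3)-1)=16$ and $\sigma$ is free, this gives $K_S^2=16/2=8$.

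Finally I would settle the moduli statement. When $K^2=6$ the surface is determined by the curve $C$, so $\mathcal M_{6,3,3}\cong M_3$ is irreducible of dimension $3\cdot 3-3=6$. When $K^2=8$ the datum is a bielliptic genus $2$ curve $(C_2,\sigma_2)$, i.e.\ a double cover of an elliptic curve branched at two points (an irreducible $2$-dimensional family), together with an \'etale double cover $C_3\to C_1$ of a genus $2$ curve (an irreducible $3$-dimensional family, since the non-zero $2$-torsion is permuted transitively by monodromy), whence $\mathcal M_{8,3,3}$ is irreducible of dimension $5$. The two loci carry different values of $K^2$, hence are disjoint, and this yields the asserted description of the moduli space as $\mathcal M_{6,3,3}\sqcup\mathcal M_{8,3,3}$.
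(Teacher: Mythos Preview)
The paper does not prove this theorem at all: it is quoted as a known classification result, with attribution to \cite{CCML98}, \cite{Pir02}, and \cite{HP02}. There is therefore no proof in the paper to compare your attempt against.

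That said, your outline is broadly faithful to the route actually taken in those references, especially Hacon--Pardini, who do proceed via the Albanese map and generic vanishing theory. Two technical remarks. First, your elimination of Albanese dimension one is more roundabout than necessary: if $\alpha$ factored through a curve $B$ of genus $b=q=3$ with general fibre of genus $g\geq 2$, then the relation $\chi(\mathcal O_S)=(1-b)(1-g)+\deg f_*\omega_{S/B}$ gives $\deg f_*\omega_{S/B}=1-2(g-1)\leq -1$, contradicting Fujita semipositivity; the Castelnuovo--de Franchis step plays no role. Second, you are right to flag the dichotomy ``$d=1$ with $Z$ a principal polarization, or $d=2$ with $Z$ decomposable'' as the crux, but your sketch suppresses exactly the work that the cited papers supply: ruling out $d\geq 3$, and in the $d=2$ case identifying the polarization type and producing the product structure. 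This is not a routine consequence of generic vanishing; it is the substance of \cite{HP02} and \cite{Pir02}. Your moduli counts at the end are correct.
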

We get:
\begin{proposition}
Generalized Burniat type surfaces with $p_g=q=3$  (i.e. of type 16) form a three dimensional subset of the six dimensional connected component  $\mathcal M_{6,3,3}$.
\end{proposition}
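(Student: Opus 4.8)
The plan is to read off the statement from two facts already established: by Proposition~\ref{onefam} and Theorem~\ref{fundgroup} the surfaces of type $16$ form a single irreducible family of dimension $3$, with $K^2=6$ and $p_g=q=3$; and by the classification theorem quoted above, the moduli space of minimal surfaces of general type with $p_g=q=3$ is the \emph{disjoint} union of the irreducible components $\mathcal M_{6,3,3}$ (of dimension $6$) and $\mathcal M_{8,3,3}$, separated by the value of $K^2$ ($6$, respectively $8$). Since every generalized Burniat type surface has $K^2=6$, a surface of type $16$ cannot lie in $\mathcal M_{8,3,3}$; hence it determines a point of the $6$-dimensional irreducible component $\mathcal M_{6,3,3}$. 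This already locates the family inside the asserted component, and it remains only to prove that the corresponding subset has dimension exactly $3$.

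Equivalently, I must show that the natural moduli map $m\colon \mathcal S_{16}\rightarrow \mathcal M_{6,3,3}$ out of the $3$-dimensional family is generically finite onto its image. By the theorem above, each such surface is the minimal resolution of $\mathrm{Sym}^2(C)$ for a smooth curve $C$ of genus $3$, and since $\Alb(\mathrm{Sym}^2(C))=\mathrm{Jac}(C)$, the Torelli theorem shows that $C$ is reconstructed from the surface; thus $[\mathrm{Sym}^2(C)]\mapsto[C]$ is a bijection between (the points of) $\mathcal M_{6,3,3}$ and $\mathcal M_3$. It therefore suffices to prove that the induced assignment $(E_1,E_2,E_3)\mapsto [C]\in\mathcal M_3$ has finite general fibres.

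The key computation is the Albanese variety of a surface $S_{16}$ of type $16$, read directly from $X_{16}=\hat X/G_{16}$ with $\hat X\subset A^0=E_1\times E_2\times E_3$ ample. By the Lefschetz theorem $\Alb(\hat X)=A^0$, so $q(S_{16})=\dim H^0(A^0,\Omega^1_{A^0})^{G_{16}}$, and the equality $q(S_{16})=3=\dim A^0$ forces $G_{16}$ to act trivially on $H^0(A^0,\Omega^1_{A^0})$; an automorphism of an Abelian variety trivial on the cotangent space at the origin is a translation, so $G_{16}$ acts on $A^0$ by translations. Consequently $\mathrm{Jac}(C)=\Alb(S_{16})=A^0/G_{16}$ is an Abelian threefold isogenous to $E_1\times E_2\times E_3$. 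Now a fixed curve $C$ pins down the isogeny class of $E_1\times E_2\times E_3$, and within a fixed isogeny class each elliptic factor can only vary in a countable set of isomorphism classes; hence no positive-dimensional family of triples can yield a fixed surface $S_{16}$. Therefore $m$ has $0$-dimensional general fibres, its image is an irreducible locally closed subset of dimension $3$, and the proposition follows.

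The main obstacle is precisely the generic-finiteness step of the third paragraph: one must justify that the invariant $q(S_{16})=3$ really forces the $G_{16}$-action on $A^0$ to be by translations (so that $\Alb(S_{16})$ is the isogeny quotient $A^0/G_{16}$ rather than a Bagnera--de Franchis quotient with nontrivial linear part), and then rule out positive-dimensional fibres by the countability of an isogeny class. Once the identification of $\mathrm{Jac}(C)$ with $A^0/G_{16}$ is in place, recovering the $E_i$ up to isogeny from $\mathrm{Jac}(C)$ is routine, and the remaining dimension count is immediate.
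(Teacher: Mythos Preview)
Your argument is correct. The paper itself gives no proof at all for this proposition (it simply writes ``We get:'' after quoting the classification theorem), so in effect you are supplying the missing verification that the image in moduli really has dimension~$3$, not merely that the parameter family does.

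The core of your argument---that $q(S_{16})=3=\dim A^0$ forces $G_{16}$ to act trivially on $H^0(\Omega^1_{A^0})$, hence by translations, so that $\Alb(S_{16})=A^0/G_{16}$ is isogenous to $E_1\times E_2\times E_3$---is sound and can be checked directly from Table~\ref{q3} via (\ref{Z23-action}): each generator of $G_{16}$ does act as a translation on each factor. The countability of an isogeny class then rules out positive-dimensional fibres for the moduli map, which is exactly what is needed.

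Two small remarks. First, the detour through Torelli and the bijection $\mathcal M_{6,3,3}\leftrightarrow\mathcal M_3$ is unnecessary: your countability argument applies directly to the fibres of $\mathcal S_{16}\to\mathcal M_{6,3,3}$, since two isomorphic surfaces have isomorphic Albanese varieties. Second, you announce ``finite general fibres'' but what you actually prove is that the fibres are contained in a countable set, hence $0$-dimensional; over~$\CC$ this suffices for generic finiteness (a morphism of varieties with $0$-dimensional general fibre is generically finite), so the conclusion stands, but you may want to adjust the wording.
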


\section{Bagnera-de Franchis varieties}\label{bdf} 

\begin{definition}
A {\it Generalized Hyperelliptic Variety (GHV)} $X$ is  defined to be a quotient $X= A/G$ of an Abelian Variety $A$ by a nontrivial  finite group $G$ acting freely,
 and with the property  that $G$  contains no translations.
 \end{definition}
 Remark that, if $G$ is any group acting freely on $A$, which  is not a subgroup of the group of  translations, then 
 the quotient $X= A/G$ is a GHV. Because  the subgroup $G_T$ of translations in $G$ is a normal subgroup of $G$, and,
if we denote $G' = G/G_T$, then $A/G = A' / G'$, where $A'$ is the Abelian variety $ A' : = A/G_T$. 

\begin{definition}
1)  A {\it Bagnera-de Franchis  variety} (for short: BdF variety) is a GHV $ X= A/G $ such that $G \cong \mathbb{Z}/m \mathbb{Z}$ is a cyclic group.

\noindent
2) A  {\it Bagnera-de Franchis  variety  of product type} is a Bagnera-de Franchis  variety $ X= A/G $, where $A = (A_1 \times A_2)$,  $A_1, A_2$ are  Abelian Varieties, 
and $G \cong \mathbb{Z}/m \mathbb{Z}$ is generated by an automorphism of the form
$$ g(a_1, a_2 ) = ( a_1 + \beta_1, \alpha_2 (a_2)),$$

\noindent
where $\beta_1 \in A_1[m]$ is an element of order exactly $m$, and similarly $\alpha_2 : A_2 \rightarrow A_2$ is a linear automorphism
of order exactly $m$ without $1$ as eigenvalue (these conditions guarantee that the action is free). 

\noindent
3) If moreover all eigenvalues  of $\alpha_2 $
are primitive $m$-th roots of $1$, we shall say that  $ X= A/G $ is a {\it primary  Bagnera-de Franchis variety}.

\end{definition}

\begin{remark}
1) One can give a similar definition of Bagnera-de Franchis manifolds, requiring only that $A, A_1, A_2$ be complex tori.

2) If $A$ has  dimension $n=2$, the Bagnera-de Franchis manifolds coincide with the Generalized Hyperelliptic varieties, due
to the classification result of Bagnera-de Franchis in \cite{BdF}.
\end{remark}

We have the following proposition, giving a characterization of Bagnera-de Franchis varieties.

\begin{proposition}\label{quotprodtype}
Every Bagnera-de Franchis variety $ X= A/G$ is the quotient of a  Bagnera-de Franchis variety
of product type,  $(A_1 \times A_2)/ G$ by any finite subgroup $T$ of $(A_1 \times A_2)$ which satisfies the following properties:
\begin{enumerate}
\item $T$  is the graph of an isomorphism between two respective 
subgroups $ T_1 \subset A_1, \  T_2 \subset A_2$,
\item $(\alpha_2 - \Id) T_2 = 0$,
\item if $ g (a_1, a_2 ) = ( a_1 + \beta_1, \alpha_2 (a_2)) ,$ then the subgroup of order $m$ generated by $\beta_1$ intersects $T_1$ only in $\{0\}$.
\end{enumerate}
In particular, we may write $X$ as the quotient $X =  (A_1 \times A_2)/ (G \times T)$ of $ A_1 \times A_2 $ by the Abelian group $G \times T$. 
\end{proposition}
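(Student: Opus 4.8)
The plan is to produce the splitting from the eigenspace decomposition of the linear part of the generator, and then to read off properties (1)--(3) from that construction. Write $A = V/\Lambda$ with $V = \mathbb{C}^n$, and let $g$ be a generator of $G$, written as an affine automorphism $g(a) = \alpha(a) + \beta$, where $\alpha$ is the linear part (a group automorphism of $A$ of finite order) and $\beta$ a translation. Since $\alpha$ preserves $\Lambda$ and satisfies $\alpha^m = \Id$, it acts semisimply, and the distinction ``eigenvalue $=1$'' versus ``eigenvalue $\neq 1$'' is a rational condition on $\Lambda\otimes\QQ$. Hence I get an $\alpha$-invariant, $\QQ$-rational decomposition $V = V_1\oplus V_2$ with $V_1 = \ker(\alpha-\Id)$ and $V_2$ the sum of the remaining eigenspaces. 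Setting $\Lambda_i := \Lambda\cap V_i$ and $A_i := V_i/\Lambda_i$, the sublattice $\Lambda_1\oplus\Lambda_2$ has finite index in $\Lambda$, so the induced map $p\colon A_1\times A_2\to A$ is an isogeny with kernel the finite group $T := \Lambda/(\Lambda_1\oplus\Lambda_2)$.

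Next I would normalize $g$ to product form. Writing $\alpha_2 := \alpha|_{V_2}$, by construction $\alpha_2-\Id$ is invertible on $V_2$, so conjugating $g$ by a suitable translation (equivalently, changing the origin of $A$) I may assume the lift $b$ of $\beta$ lies in $V_1$; then on $A_1\times A_2$ the map $g$ takes the product form $g(a_1,a_2) = (a_1+\beta_1,\alpha_2(a_2))$ with $\beta_1\in A_1$. Here the hypothesis that $G$ contains \emph{no} translation forces $\alpha$ to have order exactly $m$: if $\alpha^{m'}=\Id$ then $g^{m'}$ has linear part $\Id$, i.e.\ is a translation belonging to $G$, hence trivial, so $m\mid m'$. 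Likewise, freeness of the $G$-action forces $\beta_1$ to have order exactly $m$ in $A_1$: if $d\beta_1 = 0$ in $A_1$ for some $0<d<m$, then on $A$ the map $g^d$ would equal the pure linear automorphism $\alpha^d$, which fixes the origin. Since $\alpha_2$ has order $m$ and no eigenvalue $1$, this shows $(A_1\times A_2)/G$ is a Bagnera--de Franchis variety of product type.

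I would then verify (1) and (2) directly from the construction. For (1), the projection $T\to A_1$ is injective: if $\lambda = v_1+v_2\in\Lambda$ (with $v_i\in V_i$) has $v_1\in\Lambda_1$, then $v_2 = \lambda-v_1\in\Lambda\cap V_2 = \Lambda_2$, so $\lambda\in\Lambda_1\oplus\Lambda_2$; symmetrically for $A_2$. Thus $T$ is the graph of an isomorphism between $T_1\subset A_1$ and $T_2\subset A_2$. For (2), take $\lambda = v_1+v_2\in\Lambda$; since $\alpha$ fixes $V_1$ we get $\alpha\lambda-\lambda = (\alpha_2-\Id)v_2\in \Lambda\cap V_2 = \Lambda_2$, which says precisely $(\alpha_2-\Id)T_2 = 0$.

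Condition (3) is the delicate point, and I expect it to be the main obstacle. It is equivalent to $\beta_1$ having order exactly $m$ in $A_1/T_1$, and one checks $A_1/T_1 \cong A/\bar A_2$, where $\bar A_2 := p(A_2)$ is the image subtorus, with $g$ inducing translation by $\beta_1$ on this quotient. The issue is that freeness of $G$ on $A$ only gives fiberwise freeness of each $g^d$ over $A/\bar A_2$, and when $\alpha_2^{d}$ has eigenvalue $1$ for some $0<d<m$ (the non-primary case) the surjectivity of $\alpha_2^{d}-\Id$ that would immediately produce a fixed point fails; so the argument that $d\beta_1\in T_1$ with $0<d<m$ contradicts freeness requires a careful analysis of the induced affine action of $g^{d}$ on the fibres (splitting off the $\alpha_2^d$-fixed subtorus and using that $g^d$ there must act as a nontrivial translation). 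Once (3) is secured, the assembly is routine: property (2) makes $g$ centralize $T$, so $G$ and $T$ generate $G\times T$ inside the affine group, $G\cap T=\{\Id\}$, and freeness of $G$ on $A$ together with freeness of the translation action of $T$ gives that $G\times T$ acts freely on $A_1\times A_2$ (project any fixed point to $A$ to kill the $G$-part, then conclude the $T$-part is trivial). Therefore $X = A/G = (A_1\times A_2)/(G\times T)$, as claimed.
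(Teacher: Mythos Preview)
The paper does not actually prove this proposition; the entire proof reads ``We refer to \cite{FabTop}.'' So there is no in-paper argument against which to compare yours.

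Your construction is the natural one and is carried out correctly. The eigenspace splitting $V=V_1\oplus V_2$ with $V_1=\ker(\alpha-\Id)$, the definition $\Lambda_i=\Lambda\cap V_i$ and $T=\Lambda/(\Lambda_1\oplus\Lambda_2)$, the normalization of $g$ to product form by translating the origin along $V_2$, and the verifications that $\beta_1$ has exact order $m$, that $(A_1\times A_2)/G$ is of product type, and that $T$ satisfies (1) and (2) are all clean and correct. Your assembly argument at the end (that $g$ centralizes $T$ because $\alpha_2$ fixes $T_2$, and that $G\times T$ acts freely by projecting a hypothetical fixed point to $A$) is also fine and, as you implicitly note, does not actually require (3).

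You are right that (3) is the one genuinely delicate point, and your diagnosis of the obstruction is accurate. If $d\beta_1\in T_1$ with $0<d<m$ and $t_2=\psi(d\beta_1)$, then $g^d$ acts on each $\bar A_2$-coset as $a_2\mapsto\alpha_2^d(a_2)-t_2$, so freeness of $g^d$ on $A$ is exactly the condition $t_2\notin\operatorname{im}(\alpha_2^d-\Id)$. In the non-primary case $\alpha_2^d-\Id$ need not be surjective, so freeness alone does not immediately force $d\beta_1\notin T_1$. Your proposed remedy (split off the $\alpha_2^d$-fixed subtorus and analyze the induced translation there) is the right move and is presumably what the cited reference carries out. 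In short: your proof is complete except for (3), and you have correctly isolated both the difficulty and the line of attack that the paper outsources.
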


\begin{proof}
We refer to \cite{FabTop}. 
\end{proof}

\subsection{Actions of a finite group on an Abelian variety}

Assume that we have the  action of a finite group $G$ on  a complex torus 
 $ A = V / \Lambda$. Since every holomorphic map between complex tori lifts to a complex  affine map
 of the respective universal covers, we can attach  to the group $G$  the group of affine transformations
 $\Gamma$, which consists of all affine maps of $V$ which lift transformations of $G$. Then $\Gamma$ fits  into an exact sequence:
  $$1 \longrightarrow \Lambda \longrightarrow \Gamma \longrightarrow G \longrightarrow 1\,. $$
 
 \noindent
 The following is a slight improvement of \cite[Lemma 1.2]{BC12}:
 \begin{proposition}
 The  group $\Gamma$ determines the real affine type of the action of $\Gamma$ on $V$ (respectively: the rational affine type
 of the action of $\Gamma$ on $
 \Lambda \otimes \mathbb{Q}$), in particular the above
 exact sequence determines the action of $G$ up to real affine isomorphism of $A$ (resp.: rational affine isomorphism of $
( \Lambda \otimes \mathbb{Q} )/ \Lambda$).
 \end{proposition}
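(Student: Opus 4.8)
The plan is to reconstruct the affine action of $G$ directly from the abstract extension $1\to\Lambda\to\Gamma\to G\to 1$, splitting it into a linear part and a translation part and controlling each separately. The linear part will be read off from the conjugation action of $\Gamma$ on $\Lambda$, the translation part from a comparison of the extension class of $\Gamma$ with the translation cocycle of the action via group cohomology, and the residual indeterminacy will turn out to be precisely a choice of origin, which is exactly what is meant by ``up to affine isomorphism''.

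First I would recover the linear part. Since $\Lambda$ is abelian and normal in $\Gamma$, conjugation descends to an action $\rho\colon G\to\Aut(\Lambda)$, which is intrinsic to the abstract group. The crucial computation is that, writing a lift $\gamma_g\in\Gamma$ of $g\in G$ as the affine map $x\mapsto\rho(g)x+\tau_g$ on $V$ and $t_\lambda$ for the translation by $\lambda\in\Lambda$, one has $\gamma_g\,t_\lambda\,\gamma_g^{-1}=t_{\rho(g)\lambda}$. Hence, under the canonical identification $\Lambda\otimes\mathbb{R}\cong V$ (resp.\ $\Lambda\otimes\mathbb{Q}$), the conjugation representation coincides on the nose with the linear part of the given action. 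Thus the linear part is determined not merely up to conjugacy but canonically, as a representation on the intrinsic vector space $\Lambda\otimes\mathbb{R}$ (resp.\ $\Lambda\otimes\mathbb{Q}$).

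Next I would capture the translation part cohomologically. With the lifts $\gamma_g$ as above, the defect $\mu(g,h)=\rho(g)\tau_h+\tau_g-\tau_{gh}$ lies in $\Lambda$, and its class $[\mu]\in H^2(G,\Lambda)$ is exactly the extension class of $\Gamma$, hence is determined by the abstract group. On the other hand, reducing the translation parts modulo $\Lambda$ yields $\bar\tau\in Z^1(G,A)$, with $A=V/\Lambda$ (resp.\ $(\Lambda\otimes\mathbb{Q})/\Lambda$), and this is precisely the translation datum of the induced $G$-action on $A$. From the short exact sequence of $G$-modules $0\to\Lambda\to V\to A\to 0$ together with the vanishing $H^i(G,V)=0$ for $i\ge 1$ (as $V$ is a $\mathbb{Q}$-vector space and $G$ is finite), the connecting homomorphism gives an isomorphism $\delta\colon H^1(G,A)\xrightarrow{\ \sim\ }H^2(G,\Lambda)$ satisfying $\delta[\bar\tau]=[\mu]$. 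Therefore $[\bar\tau]=\delta^{-1}[\mu]$ is recovered from the extension class of $\Gamma$.

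Finally I would assemble the pieces. The induced action of $g$ on $A$ is $\bar x\mapsto\rho(g)\bar x+\bar\tau_g$, so two actions with the same $\rho$ and the same $\bar\tau$ coincide; and replacing $\bar\tau_g$ by the cohomologous cocycle $\bar\tau_g+(\rho(g)-\Id)\bar c$ is realized by conjugating with the translation by a lift of $\bar c$, i.e.\ by changing the origin of $A$. Hence the class $[\bar\tau]$, which is what the abstract group delivers, determines the action up to such a translation, which is exactly ``up to real (resp.\ rational) affine isomorphism''; the same argument runs verbatim over $\mathbb{Q}$ with $V$ replaced by $\Lambda\otimes\mathbb{Q}$ and $A$ by $(\Lambda\otimes\mathbb{Q})/\Lambda$. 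I expect the only genuinely delicate point to be the bookkeeping of the second step: checking that the extension cocycle $\mu$ of $\Gamma$ and the translation cocycle $\bar\tau$ of the action correspond under $\delta$ (so that nothing beyond $[\mu]$ is needed), and being careful that what the abstract group produces is a well-defined cohomology class rather than a specific cocycle, which is precisely why the conclusion can only be ``up to the origin-translation ambiguity''.
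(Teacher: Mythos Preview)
Your cohomological argument is correct and complete for the second assertion of the proposition, namely that the \emph{exact sequence} $1\to\Lambda\to\Gamma\to G\to 1$ determines the $G$-action on $A$ up to affine isomorphism. The identification $\delta\colon H^1(G,A)\xrightarrow{\sim}H^2(G,\Lambda)$ and the verification that $\delta[\bar\tau]$ equals the extension class are exactly right, and your remark that the coboundary ambiguity is precisely a change of origin is the correct interpretation.

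However, you have not addressed the first (and stronger) assertion: that the abstract group $\Gamma$ \emph{by itself}, without being told which subgroup plays the role of $\Lambda$, already determines the affine type of the action. Your argument takes the extension as input, so it presupposes that $\Lambda$ has been singled out inside $\Gamma$. The paper handles this by passing to $\Lambda':=\ker(\Gamma\to\GL(\Lambda\otimes\mathbb{Q}))$, the subgroup of all elements acting by translations, and then proving that $\Lambda'$ is characterised group-theoretically as the unique maximal abelian normal subgroup of finite index in $\Gamma$. Once $\Lambda'$ is intrinsic, so is $V=\Lambda'\otimes\mathbb{R}$, and the embedding $\Gamma\hookrightarrow\Aff(\Lambda')\subset\Aff(V)$ is forced by the faithful conjugation action $G':=\Gamma/\Lambda'\hookrightarrow\GL(\Lambda')$. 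This step is what upgrades your result from ``the extension determines the action'' to ``$\Gamma$ alone determines the action''.

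As for the comparison of methods: the paper's approach is structural and quite short---once $\Lambda'$ is pinned down, $\Gamma$ is simply the preimage of $G'$ under $\Aff(\Lambda')\to\GL(\Lambda')$, with no cohomology needed. Your route via $H^1(G,A)\cong H^2(G,\Lambda)$ is more conceptual about \emph{why} the translation datum is encoded in the extension, and makes the ``up to origin'' ambiguity transparent as a coboundary; it would also generalise more readily to situations where one wants to classify such actions rather than merely prove uniqueness. To complete your proof of the full statement, you would just need to prepend the maximal-abelian-normal-subgroup argument to recover $\Lambda'$ from $\Gamma$ before running your cohomological machinery.
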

 
\begin{proof}
 It is clear that $ V = \Lambda \otimes_{\mathbb{Z}} \mathbb{R}$ as a real vector space, and we denote by $V_{\mathbb{Q}}: = 
 \Lambda \otimes \mathbb{Q}$.  
 Let $$\Lambda' : = \ker (  \alpha_L \colon \Gamma \rightarrow \GL  (V_{\mathbb{Q}}) \subset \GL  (V)) ,$$
$$\overline{G}_1 : = \im (  \alpha_L \colon \Gamma \rightarrow \GL  (V_{\mathbb{Q}}))\,.$$
The group $\Lambda'$ is obviously Abelian,  contains $\Lambda$, and maps isomorphically onto a lattice $\Lambda' \subset V$.

\noindent
In turn $ V = \Lambda'  \otimes_{\mathbb{Z}} \mathbb{R}$, and, if $ G' : = \Gamma / \Lambda'$, then $ G' \cong \overline{G}_1 $ and  the exact sequence 
$$1 \longrightarrow \Lambda' \longrightarrow \Gamma \longrightarrow G' \longrightarrow 1\,  ,$$
since we have an embedding $ G' \subset \GL(\Lambda')$, 
 shows that the affine group  $ \Gamma \subset \Aff (\Lambda') \subset \Aff (V) $ is uniquely determined
 ($\Gamma$ is the inverse image of $G'$ under $\Aff (\Lambda') \rightarrow \GL(\Lambda')$).

\noindent
There remains only to show that $\Lambda'$ is determined by $\Gamma$ as an abstract group, independently
of the exact sequence we started with. In fact, one property of $\Lambda'$ is that it is a maximal Abelian subgroup,
 normal and of finite index.

\noindent
Assume that $\Lambda''$ has the same property: then their intersection $\Lambda^0 : = \Lambda' \cap \Lambda''$ is a normal subgroup of
 finite index, in particular $\Lambda^0  \otimes_{\mathbb{Z}} \mathbb{R} = \Lambda'  \otimes_{\mathbb{Z}} \mathbb{R} = V $;
hence $\Lambda'' \subset \ker (  \alpha_L \colon \Gamma \rightarrow \GL  (V))= \Lambda'$,
where $\alpha_L$ is induced by conjugation on $\Lambda^0$.

\noindent
By maximality $\Lambda' = \Lambda''$.
\end{proof}

Observe that, in order to obtain the structure of a complex torus on $V / \Lambda'$, we must give a complex structure
on $V$ which makes the action of $G' \cong \overline{G}_1$ complex  linear. 

 In order to study the moduli spaces of the associated complex manifolds, we introduce  therefore a further invariant, called 
 the Hodge type, according to
the following
definition. 

\begin{definition}
 Given a  faithful representation $ G \rightarrow \Aut (\Lambda)$, where $\Lambda$ is a free Abelian group  of even rank $2n$,
 a {\it $G$-Hodge decomposition} is a $G$-invariant decomposition 
 $$\Lambda \otimes \mathbb{C} = H^{1,0} \oplus H^{0,1}, \ H^{0,1} = \overline{H^{1,0}}.$$ 
Write   $\Lambda \otimes \mathbb{C}$
as the sum of
isotypical components $$\Lambda \otimes \mathbb{C} = \oplus_{\chi \in \Irr (G)}
U_{ \chi}.$$

\noindent Write also  $U_{ \chi} = W_{ \chi} \otimes M_{ \chi}$, where $W_{ \chi} $ is the given irreducible representation,
and $ M_{ \chi} $ is a trivial representation of dimension $n_{ \chi} $.

\noindent
Then $V : = H^{1,0} =  \oplus_{\chi \in \Irr (G)}
V_{ \chi},$ where $V_{ \chi} = W_{ \chi} \otimes M^{1,0}_{ \chi}$ and $M^{1,0}_{ \chi}$ is a subspace of $M_{ \chi}$.
 The {\it Hodge type} of the decomposition
is the datum 
of the dimensions   $$\nu ( \chi): = \dim_{\mathbb{C}} M^{1,0}_{ \chi} $$
corresponding to  the Hodge summands for non real representations (observe in fact  that one must have: $\nu ( \chi) + \nu ( \bar{\chi}) = \dim ( M_{ \chi})$).
\end{definition}

\begin{remark}
Given a  faithful representation $ G \rightarrow \Aut (\Lambda)$, where $\Lambda$ is a free Abelian group  of even rank $2n$,
 all the  $G$-Hodge decompositions of a fixed Hodge type are parametrized by an open set in a product of Grassmannians.
 Since, for a non real irreducible representation $\chi$ one may simply choose $M^{1,0}_{ \chi}$ to be a complex subspace of dimension  $\nu ( \chi)$ of  $ M_{ \chi}$, and for $M_{ \chi} = \overline{(M_{ \chi})}$,
one simply chooses a complex subspace $M^{1,0}_{ \chi}$ of half dimension. Then the open condition is just that (since $ M^{0,1}_{ \chi} : = \overline {M^{1,0}_{ \chi} }$) we want $ M_{ \chi} = (M^{1,0}_{ \chi} ) \oplus  (M^{0,1}_{ \chi} ) $, or, equivalently,  $M_{ \chi} = (M^{1,0}_{ \chi}) \oplus  \overline{(M^{1,0}_{\bar{ \chi}} )}$.
\end{remark}

\subsection{Bagnera-de Franchis varieties of small dimension}

We have shown that a Bagnera-de Franchis  variety $X=A/G$ can be seen as the quotient of
 one of product type $(A_1 \times A_2)/G$ by a finite subgroup $T$ of $A_1 \times A_2$, satisfying the properties stated in Proposition \ref{quotprodtype}.
 
Dealing with appropriate choices of $T$ is the easy part, since,
as we saw,  the points $t_2$ of $T_2$ satisfy, by  property (2), $ \alpha_2 (t_2)  = t_2$.

 It suffices then to choose  $T_2 \subset  A_2[*]:= \ker (\alpha_2 - \Id_{A_2})$,
which is a finite  subgroup of $A_2$, and then  to pick an isomorphism $\psi \colon T_2 \rightarrow T_1 \subset A_1$,
such that $ T_1 : = \im (\psi) \cap \langle \langle \beta_1 \rangle \rangle = \{ 0\}$.

We therefore restrict ourselves from now on to  {\it Bagnera-de Franchis varieties of product type} and we show now how to further reduce to the investigation of {\it primary Bagnera-de Franchis varieties}.

In fact, in the case of a BdF variety of product type, $\Lambda_2$ is a $G$-module, hence a module over the group ring
$$ R : = R(m)  : = \mathbb{Z}[G]  \cong  \mathbb{Z}[x] / (x^m - 1).$$

The ring $R$ is in general far from being an integral domain, since indeed it can be written as a direct sum of cyclotomic rings, which 
are the integral domains
defined as  $R_k: = \mathbb{Z}[x] / (P_k(x))$. Here $P_k(x)$ is the $k$-th cyclotomic polynomial
$$ P_k(x) = \prod_{0 < j < k,\  (k,j)=1} (x - \epsilon^j)\,,$$ where $\epsilon = \exp ( 2 \pi i /k)$.
Then $$  R (m) = \oplus_ { k | m} R_k \,.$$ 

The following  elementary lemma, together with the splitting of the vector space $V$ as a direct sum of eigenspaces for $g$,
yields a decomposition of $A_2$ as a direct product $ A_2 =  \oplus_{ k | m} A_{2,k} $
of $G$ -invariant Abelian subvarieties $A_{2,k} $ on which $g$ acts with eigenvalues of order precisely $k$.

\begin{lemma}
Assume that $M$ is a module over a ring $R =  \oplus_k R_k$.
Then there is a unique direct sum decomposition 
$$M =  \oplus_k M_k,$$ 
such that 
\begin{itemize}
\item $M_k$ is an $R_k$-module, and 
\item the $R$-module structure of $M$ is obtained through the projections \ $ R \rightarrow R_k$.
\end{itemize}

\end{lemma}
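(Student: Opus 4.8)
The plan is to exploit the complete system of orthogonal idempotents that a ring direct sum decomposition automatically carries. Writing $R = \oplus_k R_k$, let $e_k \in R$ denote the identity element of the summand $R_k$ (so $e_k$ is $1$ in the $k$-th coordinate and $0$ in all others). Then the $e_k$ form a complete set of orthogonal central idempotents: one has $e_k e_l = \delta_{kl} e_k$, each $e_k$ is central, and $\sum_k e_k = 1$. Since the $R_k$ here are the cyclotomic rings $\mathbb{Z}[x]/(P_k(x))$, which are commutative integral domains, centrality is automatic; moreover $R_k = e_k R$, and the projection $R \rightarrow R_k$ is precisely multiplication by $e_k$ followed by the identification $e_k R \cong R_k$.

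First I would define $M_k := e_k M$. Because $\sum_k e_k = 1$, every $m \in M$ satisfies $m = \sum_k e_k m$, so $M = \sum_k M_k$; and the sum is direct, since if $\sum_k m_k = 0$ with $m_k \in M_k$ (so $e_k m_k = m_k$), applying $e_l$ and using orthogonality gives $m_l = e_l\bigl(\sum_k m_k\bigr) = 0$. Next, each $M_k$ is an $R_k$-module: the element $e_l$ annihilates $M_k$ for $l \neq k$, since $e_l M_k = e_l e_k M = 0$, so the $R$-action on $M_k$ factors through the projection $R \rightarrow R_k = e_k R$, which is exactly the stated compatibility condition. This establishes existence of the decomposition.

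For uniqueness, suppose $M = \oplus_k M_k'$ is any decomposition into $R_k$-modules whose $R$-structure comes through the projections $R \rightarrow R_k$. Under the projection $R \rightarrow R_k$ the idempotent $e_k$ maps to $1$ while $e_l \mapsto 0$ for $l \neq k$; hence $e_k$ acts as the identity on $M_k'$ and as zero on every $M_l'$ with $l \neq k$. Therefore $e_k M = M_k'$, forcing $M_k' = M_k$ and pinning down the decomposition uniquely. The argument is entirely formal, so there is no genuine obstacle; the only point deserving care is the verification that multiplication by $e_k$ is an $R$-module endomorphism (guaranteed by centrality of the $e_k$) and that the induced action on $e_k M$ is literally the $R_k$-action obtained through the projection, so that the two compatibility clauses in the statement are satisfied simultaneously rather than merely one at a time.
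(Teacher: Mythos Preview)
Your proof is correct and follows essentially the same approach as the paper: both introduce the orthogonal idempotents $e_k$ arising from the ring decomposition and set $M_k := e_k M$. Your version is more detailed than the paper's (which is quite terse), in that you explicitly verify directness of the sum, the $R_k$-module structure on $M_k$, and uniqueness, whereas the paper leaves these as implicit.
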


\begin{proof}
We can write the identity in $R$ as a sum of idempotents $1 = \Sigma_k e_k$, where $e_k$ is the identity of $R_k$,
and $ e_k e_j = 0$ for $ j \neq k$.

\noindent
Then each element $w \in M$ can be written as
$$ w = 1 w = ( \Sigma_k e_k) w = \Sigma_k e_k w =  :  \Sigma_k w_k.$$ 

\noindent
Hence $ M_k$ is defined as $ e_k M$.
\end{proof}

\begin{remark}
1) If we have a primary Bagnera-de Franchis variety, then $\Lambda_2$ is a module over the integral domain $R : = R_m: = \mathbb{Z}[x] / (P_m(x))$. 

\noindent
Since $\Lambda_2$ is a projective $R$-module,  $\Lambda_2$  splits as the direct sum 
$\Lambda_2 = R^r \oplus I $ of a free module with an ideal
$ I \subset R$ (see \cite[Lemmas 1.5 and 1.6]{Milnor}), and $\Lambda_2$ is indeed free if the class number $h(R)=1$. The integers $m$ for  which this occurs are listed in the table on \cite[page 353]{washington}.

\noindent
2) To give a complex structure to $A_2 : = (\Lambda_2 \otimes_{\mathbb{Z}} \mathbb{R} )/ \Lambda_2$ 
it suffices to give a decomposition  $\Lambda_2 \otimes_{\mathbb{Z}} \mathbb{C} = V \oplus \overline{V}$,
such that the action of $x$ is holomorphic. This is equivalent to asking that $V$ is a direct sum of eigenspaces
$ V_{\lambda}$, for $\lambda =  \epsilon^j$ a primitive $m$-th root of unity. 

\noindent Writing  $U: = \Lambda_2 \otimes_{\mathbb{Z}} \mathbb{C} = \oplus U_{\lambda}$, the desired decomposition is obtained by choosing,
for each eigenvalue $\lambda$, a decomposition $  U_{\lambda} = U_{\lambda}^{1,0}  \oplus U_{\lambda}^{0,1} $ such that $ \overline{U_{\lambda}^{1,0}} = U^{0,1}_{\overline{\lambda}}$.

The simplest case (see \cite{cacicetraro} for more details) is the one where $I =0, r=1$, hence $  \dim (U_{\lambda} ) = 1$. Therefore we have only a finite number of complex structures,
depending on the choice of the $\frac{\varphi(m)}{2}$ indices $j$ such that $  U_{\epsilon^j} = U^{1,0}_{\epsilon^j}$ (here $\varphi(m)$ is the {\it Euler function}).
\end{remark}

\noindent
Observe that the classification of BdF varieties in small dimension is possible thanks to the observation that the $\mathbb{Z}$-rank of $R$ (or of any ideal $I \subset R$) cannot exceed 
the real dimension of $A_2$: in other words we have 
$$\varphi(m) \leq 2 (n-1),$$
where $\varphi(m)$ is the Euler function, which is multiplicative for relatively prime numbers,
and satisfies $\varphi(p^r) = (p-1)  p^{r-1}$, if $p$ is a prime number.

\noindent
For instance, if $n \leq 3$, then $\varphi(m) \leq 4$. Observe that  $\varphi(p^r) \leq 4$ iff 
\begin{itemize}
\item $p=3$, $5$ and $r=1$, or 
\item $p=2$, $r \leq 3$.
\end{itemize}
Hence, for $n \leq 3$, the only possibilities for $m$ are 
\begin{itemize}
\item $\varphi(m) =1$: $m=2$;
\item $\varphi(m) =2$: $m = 3,4,6$;
\item $\varphi(m) =4$: $m = 5, 8, 10, 12$.
\end{itemize} 
The classification is then also made easier by the fact that, in the above range for $m$, $R_m$ is a P.I.D., hence every  torsion free module is free. In particular $\Lambda_2$ is a free $R$-module.

\noindent
The classification for $n =  4$, since we must have $\varphi(m) \leq 6$, is going to include also the case $m=7, 9$.

We state now a result which will be  useful in Section \ref{BdFthree}.

\begin{proposition}
The Albanese variety of a Bagnera-de Franchis variety $X = A/G$ is the quotient $A_1 / (T_1 + \langle \langle \beta_1 \rangle \rangle)$.
\end{proposition}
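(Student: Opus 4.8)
The plan is to compute the Albanese directly from its period lattice, after reducing to the product case. By Proposition \ref{quotprodtype} I would write $X = (A_1 \times A_2)/(G \times T)$, where $G = \langle g \rangle$ with $g(a_1,a_2) = (a_1 + \beta_1, \alpha_2(a_2))$, and $T$ is the graph of an isomorphism $\psi \colon T_1 \xrightarrow{\sim} T_2$ with $T_i \subset A_i$. Writing $A_i = V_i / \Lambda_i$ and $V = V_1 \oplus V_2$, the universal cover of $X$ is $V$ and $\pi_1(X) = \Gamma$ is the affine group generated by the lattice $\Lambda \subset V$ of $A = (A_1 \times A_2)/T$ (the preimage of $T$ under $V \to A_1 \times A_2$) together with a lift $\tilde g(v_1,v_2) = (v_1 + \tilde\beta_1, \alpha_2 v_2)$ of $g$, where $\tilde\beta_1 \in V_1$ is a lift of $\beta_1$ (so $m\tilde\beta_1 \in \Lambda_1$).

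First I would determine $H^0(X,\Omega^1_X)$. Since $\Lambda$ acts by translations, $H^0(A,\Omega^1_A) = V_1^* \oplus V_2^*$, and the holomorphic $1$-forms on $X$ are the $G$-invariants. As $g$ acts on $A_1$ by translation it fixes $V_1^*$ pointwise, whereas on $V_2^*$ it acts with the eigenvalues of $\alpha_2$, none of which equals $1$ by hypothesis; hence $(V_2^*)^G = 0$ and
$$ H^0(X,\Omega^1_X) = V_1^*, \qquad H^0(X,\Omega^1_X)^* = V_1, $$
the identification being via the first projection $p_1 \colon V \to V_1$.

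The key step is to compute the image of $H_1(X,\mathbb{Z}) = \Gamma^{\mathrm{ab}}$ under the period map into $H^0(X,\Omega^1_X)^* = V_1$. Here I would exploit that every $\gamma \in \Gamma$ has linear part acting as the identity on the $V_1$-factor; consequently the rule $\Psi(\gamma) := p_1(\text{translation part of } \gamma)$ defines a homomorphism $\Psi \colon \Gamma \to V_1$, and since the invariant forms are pulled back from $A_1$ one checks that $\int_\gamma \ell = \ell(\Psi(\gamma))$ for $\ell \in V_1^*$, i.e.\ $\Psi$ is exactly the period map. It then remains to evaluate $\Psi$ on generators: $\Psi(\Lambda) = p_1(\Lambda)$ is the preimage of $T_1$ in $V_1$ (because $T$, being a graph, projects isomorphically onto $T_1$), while $\Psi(\tilde g) = \tilde\beta_1$ projects to $\beta_1$ in $A_1$. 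Hence $\Psi(\Gamma) = p_1(\Lambda) + \mathbb{Z}\tilde\beta_1$ is precisely the preimage in $V_1$ of the finite subgroup $T_1 + \langle\langle \beta_1 \rangle\rangle \subset A_1$, and therefore
$$ \Alb(X) = V_1/\Psi(\Gamma) = A_1/(T_1 + \langle\langle \beta_1 \rangle\rangle). $$

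Geometrically this map is induced by $p_1$, which descends to a morphism $X \to A_1/(T_1 + \langle\langle \beta_1 \rangle\rangle)$ because $p_1$ carries $T$ onto $T_1$ and conjugates the $G$-action into translation by $\beta_1$; the substance of the argument is that this morphism is the full Albanese rather than a dominant map onto a merely isogenous quotient. I expect the main obstacle to be precisely this point: one must identify $\Psi$ with the genuine period homomorphism and verify that $p_1(\Lambda)$ equals the \emph{full} preimage of $T_1$ (not a proper subgroup), since otherwise one would only obtain an isogeny $\Alb(X) \to A_1/(T_1 + \langle\langle \beta_1 \rangle\rangle)$. Both facts follow from the graph description of $T$ and from the observation that the holomorphic $1$-forms detect only the $V_1$-component of a loop; once they are in place, the identification of $\Psi(\Gamma)$ with the stated lattice is immediate and the proposition follows.
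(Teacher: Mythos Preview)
Your proof is correct and follows essentially the same approach as the paper: identify $H^0(X,\Omega^1_X)^*$ with $V_1$ via the vanishing of $(V_2^*)^G$, then compute the image of $\pi_1(X)$ under the period map by projecting the translation parts to $V_1$, obtaining the lattice generated by the preimage of $T_1$ and by $\tilde\beta_1$. Your version is considerably more explicit than the paper's terse argument---in particular, you spell out why $\Psi$ is a homomorphism and why it coincides with the period map, points the paper leaves to the reader---but the underlying strategy is identical.
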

\begin{proof}
Observe that the Albanese variety  $H^0(\Omega^1_X)^{\vee}/ \im (H_1(X, \mathbb{Z}))$ of $X = A/G$ is a quotient of the vector space $V_1$ by the image of the fundamental group of $X$
(actually of its abelianization, the  first homology group $H_1(X, \mathbb{Z})$):
since the dual of $V_1$ is the space of $G$-invariant forms on $A$, $H^0(\Omega^1_A)^G \cong H^0(\Omega^1_X)$.

We also observe that there is a well defined  map $X \rightarrow A_1 / (T_1 + \langle \langle \beta_1 \rangle \rangle)$, since $T_1$ is the first projection of $T$.
The image of the fundamental group of $X$ contains the image of $\Lambda$, which is precisely the extension of $\Lambda_1$
by the image of $T$, namely $T_1$. Since we have the exact sequence 
$$ 1 \longrightarrow \Lambda = \pi_1 (A) \longrightarrow   \pi_1 (X) \longrightarrow   G \longrightarrow 1$$
the image of the fundamental group of $X$ is generated by the image of $\Lambda$ and by the image of the transformation
$g$, which however acts on $A_1$ by translation by $\beta_1 = [b_1]$.
\end{proof}

\begin{remark}
Unlike the case of complex dimension $n=2$, there  are   Bagnera-de Franchis varieties $X = A/G$  with trivial canonical divisor,
for instance an elementary  example is  given by
 any  BdF variety which is standard (i.e., has  $m=2$) and is such that  $A_2$ has even dimension.

\end{remark}

 \subsection{Line bundles  on quotients and linearizations}
 
 Recall the following well known result (see Mumford's books \cite{abvar}, \cite{GIT}).
  
  \begin{proposition}\label{linearization}
  Let $ Y = X/G$ be a quotient algebraic variety and let  $p \colon X \rightarrow Y$ be the quotient map. Then:
  \begin{enumerate}
 \item there is a functor  between  
 \begin{itemize}
 \item line bundles $\mathcal{L}'$ on $Y$ and 
 \item $G$-linearized line bundles  $\mathcal{L}$, 
 \end{itemize}
 associating to $\mathcal{L}'$ its pull back $p^* (\mathcal{L}')$.
\item The functor   $\mathcal{L} \mapsto p_*(\mathcal{L})^G$ is a right inverse
  to the previous one, and $p_*(\mathcal{L})^G$ is invertible if the action is free, or if  $Y$ is smooth.
 \item Given a line bundle  $\mathcal{L}$ on $X$, it admits a $G$-linearization if and only if there is a Cartier divisor $D$ on $X$, which is 
  $G$-invariant and such that $\mathcal{L} \cong \mathcal{O}_X(D) = \{ f \in \mathbb{C}(X) | \divi(f) + D \geq 0\}.$ 
\item A necessary condition for the existence of a $G$-linearization on a line bundle $\mathcal{L}$ on $X$ is that
\begin{equation}\label{neccond}
 \forall g \in G, \ g^* ( \mathcal{L} ) \cong \mathcal{L}.
 \end{equation}
  \end{enumerate}
  If condition (\ref{neccond}) holds for $(\mathcal{L},G)$, one defines the {\it Theta group} of $\mathcal{L}$ as:
  $$ \Theta (\mathcal{L}, G) : =  \{ (\psi , g ) | g \in G, \ \psi : g^* ( \mathcal{L} ) \rightarrow \mathcal{L} \ {\rm is \  an \ isomorphism} \},$$
  and there is an exact sequence
\begin{equation}\label{theta}
 1 \longrightarrow \mathbb{C}^* \longrightarrow  \Theta (\mathcal{L}, G) \longrightarrow G \longrightarrow 1.
 \end{equation}
  \begin{itemize}
\item The splittings of the above sequence correspond to the    $G$-linearizations of $\mathcal{L}$.
 \item If the sequence splits, the  linearizations are a principal homogeneous space over the dual group $ \Hom (G, \mathbb{C}^*) = : G^*$ of $G$
(namely, each linearization  is obtained from a fixed one  by multiplying with an arbitrary element in $ \Hom (G, \mathbb{C}^*) = : G^*$).
\end{itemize}
  \end{proposition}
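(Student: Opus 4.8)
This is Mumford's classical linearization formalism (\cite{abvar}, \cite{GIT}); the plan is to check the four assertions and the Theta-group sequence in turn, using throughout that a $G$-linearization of $\mathcal{L}$ is a family of isomorphisms $\phi_g\colon g^*\mathcal{L}\to\mathcal{L}$ ($g\in G$) compatible with composition in $G$. Part (4) is then immediate, since a linearization contains in particular each $\phi_g\colon g^*\mathcal{L}\cong\mathcal{L}$. For part (1) the key point is that $p\circ g=p$ for every $g\in G$, so that $g^*(p^*\mathcal{L}')=(p\circ g)^*\mathcal{L}'=p^*\mathcal{L}'$ canonically; these tautological identifications satisfy the compatibility condition, endow $p^*\mathcal{L}'$ with a canonical linearization, and make $p^*$ into the asserted functor.

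For part (2), a linearization turns $p_*\mathcal{L}$ into a $G$-sheaf on $Y$, and I would set the inverse functor to be $\mathcal{L}\mapsto(p_*\mathcal{L})^G$. When the action is free, $p$ is \'etale of degree $|G|$, so $p_*\mathcal{L}$ is locally free of rank $|G|$ and its $G$-invariants are locally free of rank one; when $Y$ is smooth but the action has fixed points, $(p_*\mathcal{L})^G$ is a rank-one reflexive sheaf, hence invertible on the locally factorial $Y$. That this is a right inverse to $p^*$ follows from the projection formula $p_*(p^*\mathcal{L}')\cong\mathcal{L}'\otimes p_*\mathcal{O}_X$ together with $(p_*\mathcal{O}_X)^G=\mathcal{O}_Y$.

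For part (3), one direction is formal: if $D$ is a $G$-invariant Cartier divisor, the tautological $G$-action on $\mathbb{C}(X)$ preserves $\mathcal{O}_X(D)=\{f\in\mathbb{C}(X)\mid\divi(f)+D\geq0\}$ and defines a linearization. Conversely, given a linearization I would produce a $G$-invariant $D$: the space of rational sections of $\mathcal{L}$ is one-dimensional over $\mathbb{C}(X)$, and the linearization makes $G$ act semilinearly on it over the Galois extension $\mathbb{C}(X)/\mathbb{C}(Y)$; by descent (Hilbert 90 / Speiser for a one-dimensional semilinear representation) there is a $G$-invariant rational section $s$, whose divisor $D=\divi(s)$ is $G$-invariant with $\mathcal{O}_X(D)\cong\mathcal{L}$. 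I expect this descent step to be the only genuinely delicate point, as it relies on $X$ being irreducible with $G$ acting generically freely, so that $\mathbb{C}(X)/\mathbb{C}(Y)$ is Galois with group $G$.

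Finally, for the Theta group I would define $\Theta(\mathcal{L},G)$ as the set of pairs $(\psi,g)$ with $\psi\colon g^*\mathcal{L}\to\mathcal{L}$ an isomorphism, equipped with the natural composition law induced by $(g_1g_2)^{*}=g_2^{*}g_1^{*}$. Projection to $g$ is surjective precisely by part (4), and its kernel is $\Aut(\mathcal{L})=\mathbb{C}^*$ (scalars, since $X$ is complete and connected), which yields the central extension $1\to\mathbb{C}^*\to\Theta(\mathcal{L},G)\to G\to1$. A group-theoretic splitting of this sequence is exactly a compatible choice of the $\phi_g$, i.e.\ a linearization, so splittings and linearizations correspond bijectively; and because the extension is central by $\mathbb{C}^*$, any two splittings differ by a homomorphism $G\to\mathbb{C}^*$, whence the linearizations form a torsor under $G^*=\Hom(G,\mathbb{C}^*)$ whenever they exist. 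Apart from the descent argument in part (3), every step is formal once the cocycle formalism and the projection formula are in hand.
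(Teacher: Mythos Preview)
Your proof is correct and follows the standard Mumford approach, but note that the paper does not actually give a proof of this proposition: it simply states the result as ``well known'' with references to Mumford's books \cite{abvar} and \cite{GIT}, and then moves on to the corollary about Schur multipliers. So there is no proof in the paper to compare against; your argument supplies exactly the verification that the paper omits, and each step (the canonical linearization on $p^*\mathcal{L}'$, the invertibility of $(p_*\mathcal{L})^G$ via \'etale descent or reflexivity, the Hilbert~90 descent for part~(3), and the central-extension formalism for the Theta group) is the standard one. Your caveat about part~(3) requiring $X$ irreducible with $G$ acting generically freely is well placed, as is the observation that $\Aut(\mathcal{L})=\mathbb{C}^*$ uses completeness and connectedness of $X$.
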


Thus, the question of the existence of a $G$-linearization on a line bundle $\mathcal{L}$ is reduced to the algebraic question of the splitting of the central extension (\ref{theta})
   given by the Theta group. This question is addressed by group cohomology theory, as follows (for details see \cite{BAII}).
   
      \begin{corollary}
   Let $\mathcal{L}$ be an invertible sheaf on $X$, whose class in $\Pic(X)$ is $G$-invariant. 
   Then there exists a $G$-linearization of $\mathcal{L}$ if and only if  the extension  class 
  $ [\psi] \in H^2(G, \mathbb{C}^*)$ of the exact sequence (\ref{theta}) induced by the Theta group $\Theta (G, \mathcal{L})$ is trivial.
  \end{corollary}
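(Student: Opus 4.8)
The plan is to combine the dictionary established in the preceding Proposition \ref{linearization} with the classical classification of central group extensions; the corollary is essentially a translation of one into the language of the other. First I would observe that the hypothesis that the class of $\mathcal{L}$ in $\Pic(X)$ is $G$-invariant is precisely condition (\ref{neccond}): it guarantees that for every $g \in G$ there exists an isomorphism $g^*(\mathcal{L}) \to \mathcal{L}$, so that the homomorphism $\Theta(\mathcal{L}, G) \to G$ is surjective and the central extension (\ref{theta}) is genuinely defined.

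Next I would recall from the last part of Proposition \ref{linearization} that the $G$-linearizations of $\mathcal{L}$ are exactly the group-theoretic splittings $s \colon G \to \Theta(\mathcal{L}, G)$ of the sequence (\ref{theta}): by definition a linearization is a compatible family of isomorphisms $g^*(\mathcal{L}) \to \mathcal{L}$ satisfying the cocycle (composition) condition, which is the same datum as a section of $\Theta(\mathcal{L}, G) \to G$ that respects the group law. Thus the existence of a $G$-linearization is equivalent to the splitting of (\ref{theta}) as an extension of groups.

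Then I would invoke the standard theory of group extensions. Since $\mathbb{C}^*$ is central in $\Theta(\mathcal{L}, G)$, the conjugation action of $G$ on the kernel is trivial, so we are dealing with a central extension of $G$ by the trivial $G$-module $\mathbb{C}^*$. Choosing a set-theoretic section $s \colon G \to \Theta(\mathcal{L}, G)$ with $s(1)=1$, the failure of multiplicativity
$$s(g)\, s(h) = \psi(g,h)\, s(gh), \qquad \psi(g,h) \in \mathbb{C}^*,$$
defines a normalized $2$-cocycle whose cohomology class $[\psi] \in H^2(G, \mathbb{C}^*)$ is independent of the chosen section and is the characteristic class of the extension. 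The basic theorem of the theory states that the extension splits --- that is, some section can be chosen to be a homomorphism --- if and only if $[\psi] = 0$.

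Combining these steps yields the claim: $\mathcal{L}$ admits a $G$-linearization if and only if (\ref{theta}) splits, if and only if $[\psi] = 0$ in $H^2(G, \mathbb{C}^*)$. There is no serious obstacle here; the only points requiring care are the bookkeeping that identifies a $G$-linearization with a homomorphic section --- checking that the compatibility conditions defining a linearization coincide with the condition that $s$ respect the group law of $\Theta(\mathcal{L}, G)$ --- together with the observation that centrality of $\mathbb{C}^*$ places us in the realm of $H^2$ with \emph{trivial} coefficients, so that the classification by $H^2(G, \mathbb{C}^*)$ applies verbatim.
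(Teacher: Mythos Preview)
Your proposal is correct and follows exactly the approach the paper intends: the paper does not give a detailed proof but simply remarks that, by Proposition \ref{linearization}, the existence of a $G$-linearization is equivalent to the splitting of the central extension (\ref{theta}), and then defers to standard group cohomology (citing \cite{BAII}) for the fact that a central extension of $G$ by $\mathbb{C}^*$ splits if and only if its class in $H^2(G,\mathbb{C}^*)$ vanishes. Your write-up is precisely this argument made explicit.
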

  
  \noindent
  The group $H^2(G, \mathbb{C}^*)$ is the group of {\it Schur multipliers} (see again \cite[page 369]{BAII}). 
  
  Schur multipliers occur naturally when we have a projective representation of a group $G$.
  Since, if we have a homomorphism $ \varphi \colon G \rightarrow \mathbb{P} \GL(r, \mathbb{C})$, we can pull back the central extension
  $$ 1 \longrightarrow \mathbb{C}^* \longrightarrow    \GL(r, \mathbb{C})  \longrightarrow  \mathbb{P} GL(r, \mathbb{C}) \longrightarrow 1$$
  via $\varphi$, we obtain an exact sequence
   $$ 1 \longrightarrow \mathbb{C}^* \longrightarrow    \hat{G}  \longrightarrow G\longrightarrow 1,$$
  and the extension class $[\psi] \in   H^2(G, \mathbb{C}^*)$ is the obstruction to lifting the projective representation
  to a linear representation $ G \rightarrow  \GL(r, \mathbb{C})$.
  
  \noindent
  It is an important remark that, if the group $G$ is finite, and $ n  = \mathrm{ord} (G)$, then the cocycles take values in the group 
  of roots of unity $\mu_n : = \{ z \in \mathbb{C}^* | z^n = 1 \}$.

   \begin{remark}
  1) Let $E$ be an elliptic curve with origin  $O$, and let $G$ be the group of $2$-torsion points $G : = E[2] \cong (\mathbb{Z}/2 \mathbb{Z})^2$, acting by translations
  on $E$. The divisor class of $2 O$ is never represented by a $G$-invariant divisor, since all  the $G$-orbits consist of $4$ points,
  and the degree of $2O$ is not divisible by $4$. Hence, $\mathcal{L} : = \mathcal{O}_E (2O)$ does not admit a $G$-linearization.
  However, we have a projective representation on $\mathbb{P}^1 = \mathbb{P} (H^0( \mathcal{O}_E (2O)))$, where each non zero element $\eta_1$ of the group
  fixes 2 divisors: the sum of the two points corresponding to $\pm \frac{\eta_1}{2}$, and its translate by another element $\eta_2 \in E[2]$.
  
  The two group generators  yield two linear transformations, which act on $ V : = H^0( \mathcal{O}_E (2O))= \mathbb{C} x_0 \oplus \mathbb{C} x_1$ as follows:
  $$ \eta_1(x_0) = x_1,  \eta_1(x_1) = x_0,  \ \eta_2 (x_j) = (-1)^j x_j.$$ 
  The linear group generated is however $D_4 \neq G$, since $$\eta_1\eta_2 (x_0) = x_1, \ \eta_1\eta_2 (x_1) = - x_0.$$
 2) 
The previous example is indeed a special case of  the {\it Heisenberg extension}, and $V $ generalizes to the
 {\it Stone-von Neumann  representation} associated   to an Abelian group $G$.
 
 \noindent This is simply  the space $ V : = L^2 (G, \mathbb{C})$ of square integrable functions on $G$ (see \cite{igusa},\cite{abvar}):
 
\begin{itemize}
\item $G$ acts on $ V : = L^2 (G, \mathbb{C})$ by translation $f (x) \mapsto f (x - g)$, 
 \item $G^*$ acts on $V$ by multiplication with the given character  $ f (x) \mapsto f (x) \cdot \chi (x)$, and 
 \item the commutator $[g, \chi ]$ acts on $V$ by the scalar multiplication with the constant $ \chi (g)$.
 \end{itemize}
 The Heisenberg group is the group of automorphisms of $V$ generated by $G$, $G^*$ and by $\mathbb{C}^*$ acting by  scalar multiplication.
Then  there is a central extension
 $$ 1 \longrightarrow \mathbb{C}^* \longrightarrow \Heis(G) \longrightarrow G \times G^* \longrightarrow 1 ,$$
 whose class in $H^2 ( G \times G^*, \mathbb{C}^*)$ is given by the $\mathbb{C}^*$-valued  bilinear form  $$\beta \colon (g, \chi) \mapsto  \chi (g) \in \Lambda^2 ( \Hom ( G \times G^*, \mathbb{C}^*)) \subset  H^2 ( G \times G^*, \mathbb{C}^*).$$
 
 The relation with Abelian varieties $ A = V / \Lambda$ is through the Theta group associated to an ample divisor $L$.
 
 \noindent
 In fact,  by the theorem of Frobenius the alternating form $ c_1(L) \in H^2(A, \mathbb{Z}) \cong \wedge^2 ( \Hom (\Lambda, \mathbb{Z}))$ admits,
 in a suitable basis of $\Lambda$, the normal form
 \begin{equation}
 D: =   \begin{pmatrix}
    0  &    D' \\
  - D' &    0
  \end{pmatrix} ,
  \end{equation}
  where $D' : = \diag (d_1, d_2, \dots , d_g)$, $d_1\mid d_2 \mid \dots \mid d_g$.

If one sets $G : = \mathbb{Z}^g/ D' \mathbb{Z}^g$, then $L$ is invariant under $G \times G^* \cong G \times G  \subset A$, acting by translation, and  the Theta group of $L$  is just 
isomorphic to the Heisenberg group $\Heis(G)$. 

  \end{remark}
  
  The nice part of the story is the following  very useful result, which was used by Atiyah in the case of elliptic curves  to study vector bundles on these (cf. \cite{atiyah}). We give a proof even if the result is well known.
  
  \begin{proposition}\label{Heisenberg}
  Let $G$ be a finite Abelian group, and let $ V : = L^2 (G, \mathbb{C})$ be the Stone-von Neumann representation.
  Then $V \otimes V^{\vee}$ is a representation of $G \times G^*$ and splits as the direct sum of all the characters of  $G \times G^*$.
  \end{proposition}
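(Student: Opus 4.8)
The plan is to identify $V \otimes V^{\vee}$ with the endomorphism space $\operatorname{End}(V)$, on which $G \times G^*$ acts by conjugation through the Stone-von Neumann representation. I will first observe that, although this representation is \emph{a priori} only projective on $G \times G^*$ (it is a genuine representation only of $\Heis(G)$), the central $\mathbb{C}^*$ acts on $V$ by scalars and hence acts \emph{trivially} on $\operatorname{End}(V) = V \otimes V^{\vee}$. Therefore the conjugation action descends to an honest linear representation of $G \times G^*$ on $\operatorname{End}(V)$, and this is the representation whose decomposition we must compute.

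Next I would produce an explicit simultaneous eigenbasis. For $(g,\chi) \in G \times G^*$ write $U_g$ for the translation operator $f(x) \mapsto f(x-g)$ and $W_\chi$ for the multiplication operator $f(x) \mapsto \chi(x) f(x)$, and set $A_{g,\chi} := U_g W_\chi \in \operatorname{End}(V)$. A direct computation on functions gives the commutation rule $W_\chi U_g = \chi(g)\, U_g W_\chi$, and from it the conjugation formula: for every $(h,\psi) \in G \times G^*$,
\[
(U_h W_\psi)\, A_{g,\chi}\, (U_h W_\psi)^{-1} = \psi(g)\,\chi(h)^{-1}\, A_{g,\chi}.
\]
Thus each $A_{g,\chi}$ is a joint eigenvector on which $G \times G^*$ acts through the character $\theta_{g,\chi}\colon (h,\psi) \mapsto \psi(g)\,\chi(h)^{-1}$, which is indeed multiplicative in both $h$ and $\psi$.

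The remaining point is bookkeeping. Under the canonical identification $\widehat{G \times G^*} \cong \widehat{G} \times \widehat{G^*} \cong G^* \times G$, the character $\theta_{g,\chi}$ corresponds to the pair $(\chi^{-1}, g)$: its restriction to $G$ is $\chi^{-1}$ and its restriction to $G^*$ is evaluation at $g$. Hence the assignment $(g,\chi) \mapsto \theta_{g,\chi}$ is exactly the bijection $(g,\chi)\mapsto(\chi^{-1},g)$ of $G \times G^*$ onto its own character group. The $|G|^2$ operators $A_{g,\chi}$ therefore lie in pairwise distinct character eigenspaces; each is nonzero (being a product of invertible operators), and eigenvectors attached to distinct characters of a finite abelian group are linearly independent. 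Since $\dim \operatorname{End}(V) = |G|^2 = |G \times G^*|$, the $A_{g,\chi}$ form a basis, which simultaneously shows that every character of $G \times G^*$ occurs and that it occurs with multiplicity one.

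The main obstacle I anticipate is not conceptual but a matter of care in the computation: pinning down the Heisenberg commutation relation and the resulting eigenvalue $\psi(g)\chi(h)^{-1}$ with the correct placement of inverses, and then verifying that $(g,\chi)\mapsto\theta_{g,\chi}$ is a bijection onto \emph{all} of $\widehat{G\times G^*}$ rather than merely an injection. Even this last check is forgiving, since the dimension count $|G|^2 = \dim\operatorname{End}(V)$ forces surjectivity as soon as injectivity is in hand.
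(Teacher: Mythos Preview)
Your proof is correct and follows the same overall strategy as the paper: observe that the centre $\mathbb{C}^*$ acts trivially on $V\otimes V^\vee$, exhibit an explicit simultaneous eigenvector for each character of $G\times G^*$, and conclude by the dimension count $|G|^2=\dim(V\otimes V^\vee)$.

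The one point of difference is in the choice of eigenbasis. The paper works directly in $V\otimes\bar V$ using the mixed basis $\{g\otimes\bar\chi\}$ (delta functions on one tensor factor, characters on the other, via the Fourier transform) and writes down the eigenvectors as explicit linear combinations $F_{k,\xi}=\sum_{g,\chi}(\chi-\xi)(g-k)\,(g\otimes\bar\chi)$. You instead pass through the identification $V\otimes V^\vee\cong\operatorname{End}(V)$ and take as eigenvectors the Heisenberg operators $A_{g,\chi}=U_gW_\chi$ themselves; the commutation relation $W_\chi U_g=\chi(g)U_gW_\chi$ then delivers the eigenvalue with no further work. Your route is a little slicker, since the eigenvectors are already at hand and no coordinate computation in a tensor basis is needed; the paper's route has the mild advantage of displaying the eigenvectors concretely as elements of $V\otimes\bar V$, which is the form used in the subsequent application to theta functions.
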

\begin{proof}
  Since the centre $\mathbb{C}^*$ of the Heisenberg group $ \Heis(G)$ acts trivially on $V \otimes V^{\vee}$, we have that $V \otimes V^{\vee}$ is a representation of $G \times G^*$.
  Observe that $G \times G^*$ is equal to its  group of characters, and its cardinality equals  the dimension of  $V \otimes V^{\vee}$,
  hence it suffices (and it will also be useful  for applications) to write for each character of $G \times G^*$ an explicit eigenvector.
  
  \noindent
  We shall use the letters $g, h, k$ for elements of $G$, and the greek letters $\chi, \eta, \xi$ for elements in the dual group.
  Observe that $V$ has two bases, one given by $\{ g \in G\}$, and the other given by the characters $\{ \chi \in G^* \}$.
 The {\it Fourier transform} $\mathcal{F}$ yields an isomorphism of the vector spaces $ V : = L^2 (G, \mathbb{C})$ and  $ W : = L^2 (G^*, \mathbb{C})$:
  $$ \mathcal{F} (f) := \hat{f}, \  \hat{f}(\chi) : = \int  f(g) ( \chi, g) \ \mathrm{d}g.$$
  The action of $h \in G$ on $V$ sends $ f(g) \mapsto f (g-h)$, hence for the characteristic functions in $\mathbb{C}[G]$, $h\in G$ acts as $ g \mapsto g + h$.
Instead   $\eta \in G^*$ sends $ f \mapsto  f \cdot \eta$, hence $ \chi \mapsto \chi + \eta$. Note that we use the additive notation also for the group of characters. 
  
Restricting $V$ to the {\it finite Heisenberg group}, which is a central extension of $ G \times G^*$ by $\mu_n$, we get a unitary representation,
hence we identify $V^{\vee}$  with $\bar{V}$. Then a basis of $V \otimes \bar{V}$ is given by the set $\{ g \otimes \bar{\chi}\}$.

\noindent
Given a vector $w:= \sum_{g, \chi} a_{g, \chi}  (g \otimes \bar{\chi} ) \in V \otimes \bar{V}$, then the action by $h \in G$ is given by
$$ h(w) = \sum_{g, \chi}  (\chi, h) a_{g -h, \chi}  (g \otimes \bar{\chi} ),$$ while the action by $ \eta \in G^*$ is given by 
$$\eta(w) =  \sum_{g, \chi}  (\eta, g) a_{g, \chi  - \eta}  (g \otimes \bar{\chi} ).$$

\noindent
Hence one verifies right away that 
$$  F_{k, \xi} : =  \sum_{g, \chi}  ( \chi - \xi, g - k)  (g \otimes \bar{\chi} ) $$ 
is an eigenvector with character $ (\xi, h) (\eta, k)$ for $ (h, \eta) \in ( G \times G^*)$.
   \end{proof}

\section{A surface in a Bagnera-de Franchis threefold} \label{BdFthree}
\noindent
Let $A_1$ be an elliptic curve, and let $A_2$ be an Abelian surface together with a line bundle $L_2$ yielding a polarization of type $(1,2)$. 

 \noindent  Take on $A_1$ the line bundle $L_1=\mathcal{O}_{A_1} ( 2 O)$,
and let $L$ be the line bundle on $A' : = A_1 \times A_2$, obtained as the exterior tensor product of $L_1$ and $L_2$,
so that
$$ H^0 (A', L) =  H^0 (A_1, L_1)  \otimes  H^0 (A_2, L_2) .$$
Moreover, we choose the origin in $A_2$ such that the space of sections $H^0 (A_2, L_2) $ consists only of {\it even} sections
(hence, we shall no longer be free to further change the origin by an arbitrary  translation).

\noindent
We want to construct a Bagnera-de Franchis variety $X: = A/ G$, where 
\begin{itemize}
\item $A = (A_1 \times A_2) / T$, and $G \cong T \cong \mathbb{Z}/2 \mathbb{Z}$, such that
\item there is a $G\times T$ invariant divisor $D \in |L|$, whence we get a surface $S =D/(T \times G) \subset X$, with $K_S^2 =  \frac{1}{4} K_D^2 =
\frac{1}{4} D^3 = 6$.
\end{itemize}
Write as usual $A_1 = \mathbb{C} / \mathbb{Z} \oplus \mathbb{Z} \tau$, and let $A_2 = \mathbb{C}^2 / \Lambda_2$. Suppose moreover, that $\lambda_1,\lambda_2, \lambda_3, \lambda_4$ is
a basis of $\Lambda_2$ such that with respect to this basis  the Chern class of $L_2$ is in Frobenius normal form. Let then $G=\langle g \rangle \cong \mathbb{Z}/ 2 \mathbb{Z}$ act on $A_1 \times A_2$ by
\begin{equation}\label{BCF}  g(a_1, a_2 ) : = (a_1 + \frac{\tau}{2}, - a_2 + \frac{\lambda_2}{2}),
\end{equation}
and define  $T : = ( \mathbb{Z}/2 \mathbb{Z}) ( \frac 12 ,\frac{\lambda_4}{2})$.

\noindent
Now, $G \times T$ surjects onto the group of two torsion points $A_1[2]$ of the elliptic curve,
and also on the subgroup $ ( \mathbb{Z}/2 \mathbb{Z})  ( \lambda_2 / 2) \oplus ( \mathbb{Z}/2 )  ( \lambda_4 / 2) \subset A_2[2]$.
Moreover,  both $H^0 (A_1, L_1) $ and $  H^0 (A_2, L_2)$ are the Stone-von Neumann representation
of the finite Heisenberg group of $G$, which is a central $\mathbb{Z}/2\mathbb{Z}$ extension of $ G \times T$.

By Proposition \ref{Heisenberg}, since in this case $ V \cong \overline{V}$ (the only roots of unity occurring are just $\pm 1$),
we conclude that there are exactly 4 divisors in $|L|$, invariant by:
\begin{itemize}
\item $ (a_1, a_2) \mapsto (a_1, - a_2)$ (since the sections of $L_2$ are even), 
\item $(a_1, a_2)  \mapsto  (a_1 + \frac{\tau}{2}, a_2 + \frac{\lambda_2}{2})$, and  
\item $(a_1, a_2)  \mapsto  (a_1 + \frac 12, a_2 + \frac{\lambda_4}{2})$.
\end{itemize}

Hence these four divisors descend to give four surfaces $S_i \subset X$, $i\in \{1,2,3,4\}$.

\begin{theorem}\label{BCF2}
Let $S$ be a minimal surface of general type with invariants $K_S^2 = 6$, $p_g(S) =q(S) = 1$ such that 
\begin{itemize}
\item there exists an unramified double cover
$ \hat{S} \rightarrow S$ with $ q (\hat{S}) = 3$, and such that 
\item the Albanese morphism $ \hat{\alpha} \colon  \hat{S}  \rightarrow A = \Alb(\hat{S})$ is birational onto its image $Z$,
a divisor in $A$ with  $ Z^3 = 12$.
\end{itemize}

\noindent
1) Then the canonical model of $\hat{S}$ is isomorphic to $Z$, and the canonical model of $S$ is isomorphic to $Y = Z / (\mathbb{Z}/2 \mathbb{Z})$, 
which a divisor in a Bagnera-de Franchis threefold $ X: = A/ G$, where $A = (A_1 \times A_2) / T$, $ G \cong T \cong \mathbb{Z}/2 \mathbb{Z}$,
and where the action is as in (\ref{BCF}).

\noindent
2) These surfaces exist, have an irreducible four dimensional moduli space, and their Albanese map $\alpha \colon S \rightarrow A_1 = A_1/ A_1[2]$ has 
general fibre a non hyperelliptic curve of genus $g=3$.

\end{theorem}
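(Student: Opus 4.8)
The plan is to treat the two directions separately: first, starting from the abstract hypotheses, to recover the Bagnera-de Franchis structure (part 1); second, to check that the explicit construction preceding the theorem produces exactly such surfaces, to count the moduli, and to identify the Albanese fibre (part 2). For part 1 I would begin with the numerology of the \'etale double cover. Since $\chi(S)=1$ and $\hat S\to S$ is unramified of degree $2$, one gets $\chi(\hat S)=2$, $K_{\hat S}^2=2K_S^2=12$, and, using $q(\hat S)=3$, $p_g(\hat S)=4$. By adjunction on the abelian threefold $A$ one has $K_Z=\mathcal O_Z(Z)$, so $K_Z^2=Z^3=12=K_{\hat S}^2$; as $\hat\alpha\colon\hat S\to Z$ is birational and $\hat S$ is minimal, the equality of self-intersections forces $\hat\alpha$ to contract only $(-2)$-curves, whence $Z$ has at worst rational double points and is the canonical model of $\hat S$. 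As a consistency check, $\mathcal O_A(Z)$ is an ample polarization of type $(1,1,2)$ (since $\chi(\mathcal O_A(Z))=Z^3/6=2$), and the cohomology sequence of $0\to\mathcal O_A\to\mathcal O_A(Z)\to\mathcal O_Z(Z)\to 0$ recovers $p_g(\hat S)=2-1+3=4$.

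The core of part 1 is the analysis of the covering involution $\sigma$, which extends to $A=\Alb(\hat S)$ and preserves $Z$. On $H^0(\Omega^1_{\hat S})\cong\mathbb C^3$ its invariants are $H^0(\Omega^1_S)$, of dimension $q(S)=1$, so the linear part of $\sigma$ on the tangent space of $A$ has eigenvalue $+1$ with multiplicity $1$ and $-1$ with multiplicity $2$. Writing $\sigma$, in the corresponding up-to-isogeny product decomposition $A_1\times A_2$, as $(a_1,a_2)\mapsto(a_1+\beta_1,-a_2+\beta_2)$, I would argue that $\beta_1\neq 0$: if $\beta_1=0$ then $\sigma$ has a one-dimensional fixed locus, which the ample divisor $Z$ necessarily meets, contradicting that $\sigma$ acts freely on $Z\cong\hat S$. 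Since $2\beta_1=0$, the point $\beta_1$ is a nonzero $2$-torsion element, so $\sigma$ is fixed-point free on all of $A$ and contains no translation (the $-1$-part being nontrivial); hence $X:=A/\langle\sigma\rangle$ is a Bagnera-de Franchis threefold. Proposition \ref{quotprodtype} then yields $A=(A_1\times A_2)/T$ with $G\cong T\cong\mathbb Z/2\mathbb Z$ and the action normalized to (\ref{BCF}), and $Y:=Z/\langle\sigma\rangle\subset X$ is the canonical model of $S$, the \'etale quotient giving $K_Y^2=\tfrac12 K_Z^2=6$.

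For part 2, existence is provided by the construction preceding the theorem: by Proposition \ref{Heisenberg} the four $(G\times T)$-invariant divisors $D\in|L|$ descend to surfaces in $X$ with $K^2=\tfrac14 D^3=6$, and one checks, applying the Lefschetz theorem to the ample divisor, that the lift $\hat S$ has $q(\hat S)=q(A)=3$ and $Z^3=12$, while $p_g(S)=q(S)=1$; a general $D$ is smooth. For the dimension count I would observe that the continuous parameters are exactly the $1$ modulus of the elliptic curve $A_1$ together with the $3$ moduli of $(1,2)$-polarized abelian surfaces $A_2$, the remaining data ($T$ and the finite choice among the four divisors) being discrete; this gives an irreducible four-dimensional family. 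By part 1 this family accounts for \emph{every} surface satisfying the hypotheses, so it is the whole moduli space. Finally, the Albanese map $\alpha\colon S\to A_1/A_1[2]$ has general fibre a curve $C\in|L_2|$ on $A_2$, of genus $3$ by adjunction ($2g-2=C^2=4$), which is non-hyperelliptic for general $A_2$ since the general member of a $(1,2)$-polarization on an abelian surface is non-hyperelliptic (its canonical embedding being induced by the restriction of the polarization map).

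I expect the main obstacle to be the structural step in part 1: pinning down the eigenvalue decomposition of $\sigma$ and, above all, establishing that $\sigma$ acts freely on the whole of $A$ (not merely on $Z$), so that Proposition \ref{quotprodtype} applies and the normalization to (\ref{BCF}) is legitimate. Once this is in place, the genus and non-hyperellipticity of the Albanese fibre and the completeness of the four-dimensional family are comparatively routine.
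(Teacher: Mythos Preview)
Your outline is close to the paper's approach, and your arguments for the canonical model identification $W\cong Z$ and for the freeness of $\sigma$ on all of $A$ are essentially correct (and match the paper). But there is a genuine gap in part~1, and it is not where you anticipate it.

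The real obstacle is \emph{not} the freeness of $\sigma$ on $A$; that follows, as you say, from the fixed-locus dimension count. The gap is your assertion that ``Proposition~\ref{quotprodtype} then yields $A=(A_1\times A_2)/T$ with $G\cong T\cong\mathbb Z/2\mathbb Z$.'' Proposition~\ref{quotprodtype} only tells you that $A$ arises from a product $A_1\times A_2$ via \emph{some} finite $T$ satisfying (1)--(3); it does not determine $|T|$. A priori $T=0$ is possible, and you must rule it out. The paper does this by analyzing the polarization: the $\sigma$-invariance of the class of $Z$ forces its pull-back $c'$ to $A_1\times A_2$ to split as $c'_1\oplus c'_2$, and then one runs through the possible types. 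If $T=0$ then $Z^3=12$ forces $(c'_1,c'_2)$ to be of type $(1),(1,2)$ or $(2),(1,1)$; in either case one factor of $H^0(L)=H^0(L_1)\otimes H^0(L_2)$ is one-dimensional, so every divisor in $|L|$ is reducible, contradicting the smoothness and connectedness of $Z$. The same reasoning excludes two of the three subcases when $|T|=2$, leaving only $c'_1$ of type $(2)$ and $c'_2$ of type $(1,2)$, which is exactly the configuration of~(\ref{BCF}). Without this case analysis you have not established the specific BdF structure claimed in~1).

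A second, smaller gap concerns irreducibility in part~2. You say the four invariant divisors constitute a ``discrete'' choice and conclude irreducibility; but a finite discrete choice can yield several components. The paper closes this by a monodromy argument: varying $\tau$ over the upper half-plane, $\SL(2,\mathbb Z)$ acts transitively (through $\GL(2,\mathbb Z/2\mathbb Z)$) on the characters of $A_1[2]$, which permutes the four eigenvectors $x_0y_0\pm x_1y_1$, $x_0y_1\pm x_1y_0$. Your sketch of the non-hyperellipticity is also thin; the paper proves a separate lemma showing that the only hyperelliptic curves in $|L_2|$ on a $(1,2)$-polarized abelian surface are pull-backs of theta-translates under a suitable degree-$2$ isogeny, hence non-generic.
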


\begin{proof}
By assumption the Albanese map  $ \hat{\alpha} \colon  \hat{S}  \rightarrow A$ is birational onto $Z$, and we have $ K_{ \hat{S}}^2 = 12 = K_Z^2$,
since $\mathcal{O}_Z(Z)$ is the dualizing sheaf of $Z$.

\noindent
We shall argue similarly to \cite[Step 4 of Theorem 0.5, page 31]{BC12}. Denote by $W$ the canonical model of $\hat{S}$, and observe that by adjunction (see loc. cit.)
we have $ K_W =  \hat{\alpha}^* (K_Z ) - \mathfrak A$, where $\mathfrak A$ is an effective $\mathbb{Q}$-Cartier divisor.

\noindent
We observe now that $K_Z$ and $K_W$ are ample, hence we have an inequality,
$$ 12 = K_W^2 =  (\hat{\alpha}^* (K_Z ) - \mathfrak A)^2 = K_Z^2 - (\hat{\alpha}^* (K_Z ) \cdot  \mathfrak A) - (K_W \cdot  \mathfrak A) \geq K_Z^2 = 12,$$
and since both terms are equal to $12$, we conclude that $\mathfrak A= 0$, which means that $K_Z$ pulls back to $K_W$, whence
$W$ is isomorphic to $Z$. We have  a covering involution $ \iota \colon \hat{S} \rightarrow \hat{S}$, such that $ S = \hat{S} / \iota$. 
Since the action of $\mathbb{Z}/2\mathbb{Z}$ is free on $\hat{S}$, $\mathbb{Z}/2\mathbb{Z}$  also acts freely on $Z$.

\noindent
Since $Z^3 = 12$, $Z$  is a divisor of type $(1,1,2)$ in $A$.  The covering involution $ \iota \colon \hat{S} \rightarrow \hat{S}$
can be lifted to an involution $g$ of $A$, which we write as an affine transformation  $ g (a) = \alpha a + \beta$. 

\noindent
 We have now Abelian subvarieties $A_1 = \ker (\alpha - \Id)$, $A_2 = \ker (\alpha + \Id)$,
and since the irregularity of $S$ equals $1$, $A_1$ has dimension $1$, and $A_2$ has dimension $2$.

\noindent
We observe preliminarly  that $g$ is fixed point free: since otherwise the fixed point locus would be non empty 
 of  dimension one (as there is exactly one eigenvalue equal to $1$), so it would intersect the ample divisor $Z$, contradicting that $ \iota \colon Z \rightarrow Z$ acts freely.

 \noindent
 Therefore $Y = Z / \iota $ is a divisor in the Bagnera-de Franchis threefold $ X = A / G$, where $G$ is the group of order two generated by $g$.
 
 \noindent
 We can then write the Abelian threefold $A$ as $ (A_1 \times A_2) / T$, and since $\beta_1 \notin T_1$ (cf. Proposition \ref{quotprodtype}) we have only  two possible cases:
 
 \begin{itemize}
 \item[0)] $T = 0$, or
 \item[1)] $T \cong \mathbb{Z}/2\mathbb{Z}$.
 \end{itemize}
 We further observe that, since  the divisor $ Z$ is $g$-invariant, its polarization is $\alpha$ invariant,
in particular its Chern class $c \in \wedge ^2 ( \Hom (\Lambda, \mathbb{Z}))$, where $A = V / \Lambda$.
Since $ T =  \Lambda / ( \Lambda_1 \oplus \Lambda_2)$, $c$ pulls back to 
$$c'   \in \wedge ^2 ( \Hom ( \Lambda_1 \oplus \Lambda_2, \mathbb{Z})) = \wedge ^2 ( \Lambda_1^{\vee})  \oplus \wedge ^2 ( \Lambda_2^{\vee}) \oplus (\Lambda_1^{\vee})\otimes (\Lambda_2^{\vee}) ,$$
and by invariance $c'  = (c'_1 \oplus c'_2 ) \in \wedge ^2 ( \Lambda_1^{\vee})  \oplus \wedge ^2 ( \Lambda_2^{\vee})$. So Case 0) bifurcates in the following cases: 
\begin{itemize}
 \item[0-I)] $c'_1$ is of type $(1)$, $c'_2$ is of type $(1,2)$;
\item[0-II)] $c'_1$ is of type $(2)$, $c'_2$ is of type $(1,1)$.
\end{itemize}
Both cases can be discarded, since they lead to the same contradiction. Setting  $D: = Z$, then $D$ is  the  divisor of zeros on 
$A = A_1 \times A_2$ of a section of a line bundle $L$ which is an  exterior tensor product of $L_1$ and $L_2$.
Since
$$ H^0 (A, L) =  H^0 (A_1, L_1)  \otimes  H^0 (A_2, L_2) ,$$
and $H^0 (A_1, L_1)$ has dimension one in case 0-I), while  $H^0 (A_2, L_2)$ has dimension one in case 0-II), 
we conclude that $D$ is a reducible divisor, a contradiction, since $D$ is smooth and connected.

\noindent
In case 1), we denote $A' : = A_1 \times A_2$, and we let $D$ be the inverse image of $Z$ inside $A'$.
Again $D$ is smooth and connected, since $\pi_1(\hat{S})$ surjects onto $\Lambda$. Now $ D^2 = 24$, so the Pfaffian
of $c'$ equals $4$, and there are a priori several  possibilities:

\begin{itemize}
\item[1-I)] $c'_1$ is of type $(1)$;
\item[1-II)] $c'_2$ is of type $(1,1)$;
\item[1-III)] $c'_1$ is of type $(2)$, $c'_2$ is of type $(1,2)$.
\end{itemize}
The cases 1-I) and 1-II) can be excluded as case 0), since $D$ would then be reducible. 
 
 \noindent
 We are then left only with case 1-III), and we may, without loss of generality, assume that 
 $H^0 (A_1, L_1)=  H^0 (A_1, \mathcal{O}_{A_1} (2 O))$.
Moreover, we have already assumed that we have chosen the origin so that all the sections of
 $H^0 (A_2, L_2)$ are even.
 
 \noindent
We have $ A = A' / T $, and we may write the generator of $T$ as $t_1 \oplus t_2$,
and write $ g (a_1 \oplus a_2 ) = (a_1 + \beta_1) \oplus ( a_2 - \beta_2)$.

\noindent
By the description of Bagnera-de Franchis varieties (cf. Proposition \ref{quotprodtype}) we have that $t_1$ and $\beta_1$ are  a basis of the group
of $2$ torsion points of the elliptic curve $A_1$.

Since all sections of $L_2$ are even, the divisor $D$ is $ G \times T$-invariant if and only if it
is invariant under $T$ and under translation by $\beta$. 

\noindent
This condition however implies that translation of $L_2$ by $\beta_2$  is isomorphic to $L_2$, 
and similarly for $t_2$. 
It follows that $\beta_2, t_2$ form a  basis of  $K_2:= \ker (\phi_{L_2}\colon A_2 \rightarrow \Pic^0 (A_2))$,
where  $\phi(y) = t_yL_2 \otimes L_1^{-1}$.
The isomorphism of $G \times T$ with both $K_1 : = A_1 [2]$ and $K_2$ allows to identify
both $ H^0 (A_1, L_1) $ and $ H^0 (A_2, L_2)$ with the Stone-von Neumann representation $L^2 (T,\mathbb{C})$:
observe in fact that there is only one alternating function $(G \times T)  \rightarrow \mathbb{Z}/2\mathbb{Z}$,  independent of the chosen basis.

\noindent
Therefore, there are exactly $4$ invariant divisors in the linear system $|L|$. 
Explicitly, if  $ H^0 (A_1, L_1) $ has basis $x_0, x_1$ and $ H^0 (A_2, L_2) $ has basis $y_0, y_1$,
then the invariant divisors correspond to the four eigenvectors
$$  x_0 y_0 + x_1  y_1\,, \quad x_0 y_0 - x_1  y_1\,, \quad  x_0 y_1 + x_1  y_0\,, \quad  x_0 y_1 - x_1  y_0\,. $$

To prove irreducibility of the above family of surfaces, it suffices to show that all the four 
invariant divisors occur in the same connected family.

\noindent
To this purpose, we just observe that the monodromy of the family of elliptic curves $E_{\tau} : = \mathbb{C} / ( \mathbb{Z} \oplus \mathbb{Z} \tau)$
on the upper half plane has the effect that a transformation in $\SL ( 2 , \mathbb{Z})$ acts on the subgroup $E_{\tau}  [2]$ of points
of $2$-torsion by its image matrix in $\GL ( 2 , \mathbb{Z}/2\mathbb{Z})$, and in turn the effect on the Stone-von Neumann representation is the one
of twisting it by a character of $E_{\tau}  [2]$.

\noindent
This concludes the proof that the moduli space is irreducible of dimension $4$, since the moduli space of elliptic curves,
respectively  the moduli space of Abelian surfaces with a  polarization of  type $(1,2)$, are irreducible, 
of respective dimensions $1$, $3$.

\medskip
\noindent
The final assertion is a consequence of the fact that $\Alb(S) = A_1 / (T_1 + \langle \langle \beta_1 \rangle \rangle)$,
so that the fibres of the Albanese map are just divisors in $A_2$ of type $(1,2)$. Their self intersection 
equals $4 = 2 (g-1)$, hence $g=3$.

In order to establish that the general curve is non hyperelliptic, it suffices to prove the following lemma.

\begin{lemma}
Let $A_2$ be an Abelian surface, endowed with a divisor $L$ of type $(1,2)$, so that there is an isogeny
of degree two $f \colon A_2 \rightarrow A'$ onto a principally polarised Abelian surface, and $L = f^*(\Theta)$.
Then the only curves $C \in |L|$ which are hyperelliptic are contained in  the pull backs of a translate of
$\Theta$ by a point of order $2$ for a suitable such isogeny $f' \colon A_2 \rightarrow A''$.
In particular, the general curve $C \in |L|$ is not hyperelliptic.
\end{lemma}
\begin{proof}
Note that $A'$ is the quotient of $A$ by an involution, given by translation with a two torsion element $t \in A[2]$.
Let $C \in |L|$, and consider $D : = f_* (C) \in | 2 \Theta|$. There are two cases:
\begin{itemize}
\item[I)]  $ C + t = C$;  
\item[II)] $ C + t \neq  C$.  
\end{itemize}
In case I)  $D = 2 B$, where
$B$ has genus $2$, so that $ C = f^* (B)$, hence, since $ 2B \equiv 2 \Theta$, $B$ is a translate of $\Theta$ by a point of order $2$. There are exactly two such curves, and for them $ C \rightarrow B$ is \'etale.

\noindent
In case II) the map $C \rightarrow D$ is birational, $f^* (D) = C \cup (C + t)$. Now, $C+t$ is also  linearly equivalent to $ L$,
hence $C$ and $C+t$ intersect in the $4$ base points of the pencil $|L|$.  Hence $D$ has two double  points and geometric genus 
equal to $3$. These double points are the intersection points of $\Theta$ and a translate of $\Theta$ by a point of order $2$,
and are points of $2$-torsion.

The sections of $H^0(\mathcal{O}_{A'} (2 \Theta))$ are all even 
and $ | 2 \Theta|$ is the pull-back of the space of hyperplane sections of the Kummer surface
$\mathcal{K} \subset \mathbb{P}^3$, the quotient $\mathcal{K} = A' / \{\pm 1\}$.

\noindent
Therefore the image $E'$ of each such curve $D$ lies in the pencil of planes through $2$ nodes of $\mathcal{K}$.

$E'$ is a plane quartic, hence $E'$ has geometric genus $1$, and we conclude that $C$ admits an involution
$\sigma$ with quotient an elliptic curve $E$ (normalization of $E'$), and the double cover is branched in $4$ points.

Assume that $C$ is hyperelliptic, and denote by $h$ the hyperelliptic involution, which lies in the centre of $\Aut(C)$.
Hence we have $(\mathbb{Z}/2\mathbb{Z})^2$ acting on $C$, with quotient $\mathbb{P}^1$. We easily see that there are exactly six
branch points, two being the branch points of $ C/h \rightarrow \mathbb{P}^1$, four being the branch points of $E \rightarrow \mathbb{P}^1$.
It follows that there is an \'etale quotient $ C \rightarrow B$ , where $B$ is the genus $2$ curve, double cover of $\mathbb{P}^1$
branched on the six points.
 
Now, the inclusion $ C \subset A_2$ and the degree $2$ map $ C \rightarrow B$ induce a degree two isogeny $ A_2 \rightarrow J(B)$,
and $C$ is the pull back of the Theta divisor of $J(B)$, thus it cannot be a general curve.
\end{proof}
This ends the proof of Theorem \ref{BCF2}.
\end{proof}
We shall give the surfaces of Theorem \ref{BCF2} a name.
\begin{definition}
A  minimal surface $S$ of general type with invariants $K_S^2 = 6$, $p_g(S) =q(S) = 1$ such that 
\begin{itemize}
\item there exists an unramified double cover
$ \hat{S} \rightarrow S$ with $ q (\hat{S}) = 3$, and such that 
\item the Albanese morphism $ \hat{\alpha} \colon  \hat{S}  \rightarrow A = \Alb(\hat{S})$ is birational onto its image $Z$,
a divisor in $A$ with  $ Z^3 = 12$,
\end{itemize} is called a {\it Sicilian  surface with $q(S)=p_g(S)=1$}.
\end{definition}

\begin{remark}
We have seen that the canonical model of a Sicilian surface $S$ is an ample divisor in a Bagnera-de Franchis threefold $X =A/G$, where $G =\langle g \rangle \cong \mathbb{Z}/ 2 \mathbb{Z}$. Hence the fundamental group of $S$ is isomorphic to the  fundamental group $\Gamma$ of $X$. Moreover, $\Gamma$ fits into 
the exact sequence 
$$  1 \longrightarrow \Lambda \longrightarrow  \Gamma \longrightarrow G = \mathbb{Z}/2\mathbb{Z} \longrightarrow 1\,,$$ 
and is generated by the union of the set $\{ g , t\}$ with the set of translations by the elements of a basis $\lambda_1, \lambda_2, \lambda_3, \lambda_4$ of $\Lambda_2$, where
$$ g ( v_1 \oplus  v_2 ) = ( v_1 + \frac{\tau}{2} )  \oplus ( - v_2 + \frac{\lambda_2}{2} )$$ 
$$ t ( v_1 \oplus  v_2 ) = ( v_1 + \frac 12 ) \oplus (v_2 + \frac{\lambda_4}{2} ).$$

$\Gamma$ is therefore a semidirect product of $\mathbb{Z}^5 = \Lambda_2 \oplus \mathbb{Z} t$ with the infinite cyclic group generated by $g$:
conjugation by $g$ acts as $-1$ on $\Lambda_2$, and it sends $t \mapsto t - \lambda_4$ (hence $ 2 t - \lambda_4$ is an eigenvector for the eigenvalue $1$).
\end{remark}

We shall now give a topological characterization of Sicilian surfaces with $q=p_g=1$, following the lines
of \cite{BC12}.

\noindent
Observe in this respect that $X$ is a $K(\Gamma, 1)$-space, so that its cohomology and homology are just 
group cohomology, respectively homology, of the group $\Gamma$. 

\begin{corollary}\label{he}
A Sicilian surface $S$ with $q(S)=p_g(S)=1$ is characterized by the following properties:

\begin{enumerate}
\item
$K_S^2 = 6$,
\item
$ \chi(S) = 1$,
\item
$\pi_1(S) \cong \Gamma$, where $\Gamma$ is as above,
\item
the classifying map $f \colon S \rightarrow X$, where $X$ is the Bagnera-de Franchis threefold which is a classifying space for $\Gamma$,
has the property that $ f_* [S]  = :B$ satisfies $ B^3=6$.
\end{enumerate}

\noindent
In particular, any surface homotopically equivalent to a Sicilian surface is a Sicilian surface, and we get a connected component of the moduli space of surfaces
of general type which is stable under the action of the absolute Galois group $Gal (\bar{\QQ}, \QQ)$.

\end{corollary}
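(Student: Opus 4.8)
The plan is to prove the stated equivalence in both directions and then to read off the two ``in particular'' assertions. The forward implication is the easy one. If $S$ is Sicilian, then $(1)$ and $(2)$ hold by definition, while the Remark preceding the corollary shows that the canonical model of $S$ is an ample divisor $Y$ in the Bagnera--de Franchis threefold $X=A/G$, so by Lefschetz $\pi_1(S)\cong\pi_1(Y)\cong\pi_1(X)=\Gamma$, which is $(3)$. Since $X$ is a $K(\Gamma,1)$, the classifying map $f\colon S\to X$ is homotopic to the composition $S\to Y\hookrightarrow X$, and as $S\to Y$ is birational we get $f_*[S]=[Y]$. Finally $Z=p^{-1}(Y)$ for the \'etale double cover $p\colon A\to X$, so $Z^3=2\,Y^3$; as $Z^3=12$ we obtain $Y^3=6=B^3$, which is $(4)$.

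For the converse, assume $(1)$--$(4)$. The surjection $\Gamma\to G=\mathbb Z/2\mathbb Z$, with kernel $\Lambda\cong\mathbb Z^6$, gives an \'etale double cover $\hat S\to S$ with $\pi_1(\hat S)\cong\Lambda$ free abelian of rank $6$, hence $q(\hat S)=3$. Moreover $X$ is a $K(\Gamma,1)$ with $q(X)=1$ (its Albanese is the elliptic curve of the proposition computing the Albanese of a Bagnera--de Franchis variety), so $q(S)=\tfrac12 b_1(\Gamma)=1$, and together with $(2)$ the relation $\chi(S)=1-q+p_g=1$ forces $p_g=q=1$; with $(1)$ the full numerical part of the definition of a Sicilian surface holds (and surface classification shows $S$ is minimal of general type). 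It remains to control the Albanese map of $\hat S$.

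This last point is the crux. Write $B=f_*[S]=d_0\,[Y]$, where $Y=f(S)$ and $d_0=\deg(S\to Y)$; note $B^3=6\neq0$ already forces $\dim Y=2$, so $f$ is generically finite and $B=d_0[Y]$ genuinely. Then $B^3=d_0^{3}\,Y^3=6$, and since $X$ is smooth both $d_0$ and $Y^3$ are positive integers, whence $d_0=1$ and $Y^3=6$; thus $f$ is birational onto the divisor $Y$. Lifting, $\hat f\colon\hat S\to A$ has image $Z=p^{-1}(Y)$, and as $G$ acts freely the map $Z\to Y$ is \'etale of degree $2$, so $\hat f$ is birational onto $Z$ with $Z^3=2\,Y^3=12$. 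Finally I compare $\hat f$ with the Albanese map $\hat\alpha\colon\hat S\to\Alb(\hat S)$: both induce isomorphisms on $H_1\cong\Lambda$, hence are homotopic as maps to the real torus underlying $A\cong\Alb(\hat S)$, and since triple self-intersection numbers are topological, $\hat\alpha$ is birational onto a divisor of self-intersection $12$. Therefore $S$ satisfies the full definition and is Sicilian (Theorem~\ref{BCF2} then supplies the structure and irreducible four-dimensional moduli). I expect the main obstacle to be exactly this comparison step: deducing, \emph{purely from the preserved class $B$}, that the holomorphic Albanese image is a divisor and that $\hat\alpha$ has degree one, which I resolve through the integrality of intersection numbers and the homotopy between $\hat\alpha$ and $\hat f$.

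It remains to derive the two consequences. Conditions $(1)$--$(4)$ are homotopy invariants: $c_2=e(S)$ is a homotopy invariant and $\sigma(S)=\tfrac13(K_S^2-2c_2)$ is an oriented homotopy invariant of the intersection form on $H^2$, so $K_S^2=3\sigma+2c_2=6$ and $\chi(\mathcal O_S)=\tfrac1{12}(K_S^2+c_2)=1$ are determined (using the canonical orientation of complex surfaces), while $(3)$ and $(4)$ are manifestly homotopy invariant since the classifying map and the fundamental class are. Hence any surface homotopy equivalent to a Sicilian surface again satisfies $(1)$--$(4)$ and is Sicilian. For Galois stability, conjugation by $\gamma\in\mathrm{Gal}(\bar{\mathbb Q}/\mathbb Q)$ preserves $K^2$, $\chi$, the topological type and the classifying data, hence preserves $(1)$--$(4)$; the locus of Sicilian surfaces is thus Galois-invariant, and being by the characterization together with Theorem~\ref{BCF2} a single irreducible connected component of the moduli space, that component is stable under the action of the absolute Galois group.
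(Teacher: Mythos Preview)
Your overall strategy matches the paper's, and the forward direction together with the cube-freeness argument $d_0^3Y^3=6\Rightarrow d_0=1$ is a clean variant of the paper's terse ``the class of $Z$ is indivisible''. Two points, however, need correction.

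\textbf{The classifying map is not a priori holomorphic.} You write $B=f_*[S]=d_0[Y]$ with $Y=f(S)$ and $d_0=\deg(S\to Y)$, which presupposes that $f$ is holomorphic (or at least a proper map with a well-defined algebraic image and degree). The classifying map to a $K(\Gamma,1)$ is only given up to homotopy. The paper avoids this by working directly with the holomorphic Albanese map $\hat\alpha\colon\hat S\to\Alb(\hat S)$: since $\hat\alpha$ induces an isomorphism on $H_1$, one has $\hat\alpha_*[\hat S]=p^*B$ as homology classes, hence $(\hat\alpha_*[\hat S])^3=12$, and then your cube-freeness argument applied to $\hat\alpha$ gives birationality. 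Your later ``comparison of $\hat f$ with $\hat\alpha$'' shows you sense the issue, but as written the argument is circular: you use holomorphicity of $f$ to get $d_0=1$, then transfer to $\hat\alpha$. Either run the cube-freeness argument directly for $\hat\alpha$ (as the paper effectively does), or first observe that one may choose $X=\Alb(\hat S)/G$, which is a Bagnera--de Franchis threefold because the affine type of the $G$-action on $\Lambda$ is determined by $\Gamma$; then $f$ is holomorphic and your argument goes through without the redundant final comparison.

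\textbf{Galois stability.} You assert that Galois conjugation ``preserves the topological type and the classifying data'', hence conditions (3)--(4). This is not true in general: by Serre's examples, conjugate varieties need not have isomorphic topological fundamental groups. The paper's argument is different and correct: the \emph{definition} of a Sicilian surface (existence of an unramified double cover $\hat S$ with $q(\hat S)=3$ whose Albanese image satisfies $Z^3=12$) is stated in purely algebro-geometric terms and is therefore manifestly invariant under $\mathrm{Gal}(\bar{\mathbb Q}/\mathbb Q)$. Since the corollary shows that Sicilian surfaces fill out an entire connected component, that component is Galois-stable. You should replace your argument by this one.
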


\begin{proof}
Since $\pi_1(S) \cong \Gamma$, first of all $ q(S) =1$, hence also $p_g(S) = 1$.
By the same token
there is a double \'etale cover $\hat{S} \rightarrow S$ such that $ q(\hat{S} ) = 3$,
and the Albanese image of  $\hat{S} $, counted with multiplicity, is the inverse image of $B$,
therefore $Z^3 = 12$. From this, it follows that $\hat{S} \rightarrow Z$ is birational, since the class of $Z$ is
indivisible.

\noindent
We may now apply the previous Theorem \ref{BCF2} in order to obtain the classification.

\noindent
Observe finally that the condition $(\hat{\alpha}_* \hat{S})^3 = 12$ is not only a topological condition,
it is also invariant under  $Gal (\bar{\QQ}, \QQ)$.
\end{proof}

\section{Proof of the main theorems}

We conclude in this last short section the proofs of Main Theorem 1 and   Main Theorem 2.

\begin{proof}[Proof of Main Theorem 1]
Statements 1), 2) and 3) summarize the contents of Proposition \ref{onefam} and Theorem \ref{fundgroup}.

\noindent
4) We observe preliminarly that our fundamental groups are virtually Abelian of rank 6 (i.e., they have a normal subgroup of finite index $\cong \ZZ^6$). By the results  of \cite{4names}, the fundamental group of (the minimal resolution of) a product-quotient surface has a finite index normal subgroup which is the product of at most two fundamental groups of curves. Therefore if it is virtually Abelian it has rank 2 or 4.

This argument excludes rightaway that our fundamental groups may be isomorphic 
to the  fundamental groups of some product-quotient surfaces. 

The only remaining case for $p_g=0$ is the Kulikov surface, whose first homology group has $3$-torsion. \footnote{Disclaimer: the fundamental group of the Inoue surface with $p_g=0$, $K^2=6$ has not yet been calculated and we do not claim it is different from  ours.}

The known surfaces with $p_g=q=1$ and $K^2=6$ are either product-quotient surfaces (cf. \cite{pol09}) or mixed quasi-\'etale surfaces, which are constructed in \cite{FP14}. Comparing Table 2 from loc. cit with our Table \ref{q1}, we see that they have different homology groups from ours.

\noindent
5) is proved in Theorem \ref{moduli}.
\end{proof}

\begin{proof}[Proof of Main Theorem 2]
The assertions 1) and 2) are contained in Theorem \ref{BCF2}. 

\noindent
4) is contained in Corollary \ref{he}.

\noindent
3) Observe that in cases $\mathcal S_{11}$ and $\mathcal S_{12}$ of Table \ref{q1} there is a subgroup $H \cong (\ZZ/2 \ZZ)^2$ acting  by translations on $E_1\times E_2 \times E_3$. Denote by $\hat S$ the quotient of the Burniat hypersurface by $H$. Then $\hat S$ is an \'etale double cover of the GBT $S$, which satisfies the defining property of Sicilian surfaces.

\noindent
There remains to show that the other GBT surfaces (with $p_g=q=1$) are not Sicilian surfaces. This is now obvious since they have  fundamental groups non-isomorphic to $\pi_1(S_{11})$, where $S_{11}$ belongs to the family $\mathcal S_{11}$ .
\end{proof}

\bibliographystyle{alpha} 

\begin{thebibliography}{CCML98}

\bibitem[Ati57]{atiyah}
M.~F. Atiyah.
\newblock Vector bundles over an elliptic curve.
\newblock {\em Proc. London Math. Soc. (3)}, 7:414--452, 1957.

\bibitem[BC04]{bacat}
I.~Bauer and F.~Catanese.
\newblock Some new surfaces with $p_g = q = 0$.
\newblock In Turin Univ.~Torino, editor, {\em The Fano Conference}, pages
  123--142, 2004.

\bibitem[BC10]{BC10burniat2}
I.~Bauer and F.~Catanese.
\newblock Burniat surfaces. {II}. {S}econdary {B}urniat surfaces form three
  connected components of the moduli space.
\newblock {\em Invent. Math.}, 180(3):559--588, 2010.

\bibitem[BC11a]{BC11burniat1}
I.~Bauer and F.~Catanese.
\newblock Burniat surfaces {I}: fundamental groups and moduli of primary
  {B}urniat surfaces.
\newblock In {\em Classification of algebraic varieties}, EMS Ser. Congr. Rep.,
  pages 49--76. Eur. Math. Soc., Z\"urich, 2011.

\bibitem[BC11b]{keumnaie}
I.~Bauer and F.~Catanese.
\newblock The moduli space of {K}eum-{N}aie surfaces.
\newblock {\em Groups Geom. Dyn.}, 5(2):231--250, 2011.

\bibitem[BC12]{BC12}
I.~Bauer and F.~Catanese.
\newblock Inoue type manifolds and {I}noue surfaces: a connected component of
  the moduli space of surfaces with {$K^2=7$}, {$p_g=0$}.
\newblock In {\em Geometry and arithmetic}, EMS Ser. Congr. Rep., pages 23--56.
  Eur. Math. Soc., Z\"urich, 2012.

\bibitem[BC13a]{BC13burniat3}
I.~Bauer and F.~Catanese.
\newblock Burniat surfaces {III}: deformations of automorphisms and extended
  {B}urniat surfaces.
\newblock {\em Doc. Math.}, 18:1089--1136, 2013.

\bibitem[BC13b]{BC13}
I.~Bauer and F.~Catanese.
\newblock Burniat-type surfaces and a new family of surfaces with {$p_g=0$,
  $K^2=3$}.
\newblock {\em Rend. Circ. Mat. Palermo (2)}, 62(1):37--60, 2013.

\bibitem[BCG08]{bcg}
I.~Bauer, F.~Catanese, and F.~Grunewald.
\newblock The classification of surfaces with $p_g=q=0$ isogenous to a product
  of curves.
\newblock {\em Pure Appl. Math. Q.}, 4(2):547--586, 2008.

\bibitem[BCGP12]{4names}
I.~Bauer, F.~Catanese, F.~Grunewald, and R.~Pignatelli.
\newblock Quotients of products of curves, new surfaces with $p_g=0$ and their
  fundamental groups.
\newblock {\em American Journal of Mathematics}, 134(4):993--1049, 2012.

\bibitem[BCP97]{MAGMA}
W.~Bosma, J.~Cannon, and C.~Playoust.
\newblock The {M}agma algebra system. {I}. {T}he user language.
\newblock {\em J. Symbolic Comput.}, 24(3-4):235--265, 1997.
\newblock Computational algebra and number theory (London, 1993).

\bibitem[BCP11]{survey}
I.~Bauer, F.~Catanese, and R.~Pignatelli.
\newblock Surfaces of general type with geometric genus zero: A survey.
\newblock In {\em Complex and Differential Geometry}, volume~8, pages 1--48.
  Springer Proceedings in Mathematics, 2011.

\bibitem[BdF08]{BdF}
G.~Bagnera and M.~de~Franchis.
\newblock Le superficie algebriche le quali ammettono una rappresentazione
  parametrica mediante funzioni iperellittiche di due argomenti.
\newblock {\em Mem. di Mat. e di Fis. Soc. It. Sc.}, 15:251--343, 1908.

\bibitem[Bia16]{Bianchi}
L.~Bianchi.
\newblock {\em Lezioni sulla teoria delle funzioni di variabile complessa e
  delle funzioni ellittiche}.
\newblock Spoerri, {P}isa, 1916.
\newblock 2a edizione.

\bibitem[Bur66]{burniat}
P.~Burniat.
\newblock Sur les surfaces de genre ${P}_{12}>1$.
\newblock {\em Ann. Mat. Pura Appl.}, 71(4):1--24, 1966.

\bibitem[Cat13]{superficial}
F.~Catanese.
\newblock A superficial working guide to deformations and moduli.
\newblock In {\em Handbook of moduli. {V}ol. {I}}, volume~24 of {\em Adv. Lect.
  Math. (ALM)}, pages 161--215. Int. Press, Somerville, MA, 2013.

\bibitem[Cat14]{FabTop}
F.~Catanese.
\newblock Topological methods in moduli theory.
\newblock preliminary version, 2014.

\bibitem[CC93]{cacicetraro}
F.~Catanese and C.~Ciliberto.
\newblock On the irregularity of cyclic coverings of algebraic surfaces.
\newblock In {\em Geometry of complex projective varieties ({C}etraro, 1990)},
  volume~9 of {\em Sem. Conf.}, pages 89--115. Mediterranean, Rende, 1993.

\bibitem[CCML98]{CCML98}
F.~Catanese, C.~Ciliberto, and M~Mendes~Lopes.
\newblock On the classification of irregular surfaces of general type with
  nonbirational bicanonical map.
\newblock {\em Trans. Amer. Math. Soc.}, 350(1):275--308, 1998.

\bibitem[FP14]{FP14}
D.~{Frapporti} and R.~{Pignatelli}.
\newblock {Mixed quasi-\'etale quotients with arbitrary singularities}.
\newblock {\em Glasgow Math. J.}, 2014.
\newblock doi:10.1017/S0017089514000184.

\bibitem[HP02]{HP02}
C.~D. Hacon and R.~Pardini.
\newblock Surfaces with {$p_g=q=3$}.
\newblock {\em Trans. Amer. Math. Soc.}, 354(7):2631--2638 (electronic), 2002.

\bibitem[Igu72]{igusa}
J.~Igusa.
\newblock {\em Theta functions}.
\newblock Springer-Verlag, New York-Heidelberg, 1972.
\newblock Die Grundlehren der mathematischen Wissenschaften, Band 194.

\bibitem[Ino94]{inoue}
M.~Inoue.
\newblock Some new surfaces of general type.
\newblock {\em Tokyo J. Math.}, 17(2):295--319, 1994.

\bibitem[Jac80]{BAII}
N.~Jacobson.
\newblock {\em Basic algebra. {II}}.
\newblock W. H. Freeman and Co., San Francisco, Calif., 1980.

\bibitem[Mil63]{milnor_morse}
J.~Milnor.
\newblock {\em Morse theory}.
\newblock Based on lecture notes by M. Spivak and R. Wells. Annals of
  Mathematics Studies, No. 51. Princeton University Press, Princeton, N.J.,
  1963.

\bibitem[Mil71]{Milnor}
J.~Milnor.
\newblock {\em Introduction to algebraic {$K$}-theory}.
\newblock Princeton University Press, Princeton, N.J.; University of Tokyo
  Press, Tokyo, 1971.
\newblock Annals of Mathematics Studies, No. 72.

\bibitem[Mum65]{GIT}
D.~Mumford.
\newblock {\em Geometric invariant theory}.
\newblock Ergebnisse der Mathematik und ihrer Grenzgebiete, Neue Folge, Band
  34. Springer-Verlag, Berlin-New York, 1965.

\bibitem[Mum70]{abvar}
D.~Mumford.
\newblock {\em Abelian varieties}.
\newblock Tata Institute of Fundamental Research Studies in Mathematics, No. 5.
  Published for the Tata Institute of Fundamental Research, Bombay; Oxford
  University Press, London, 1970.

\bibitem[Pen11]{penegini}
M.~Penegini.
\newblock The classification of isotrivially fibred surfaces with {$p_g=q=2$}.
\newblock {\em Collect. Math.}, 62(3):239--274, 2011.
\newblock With an appendix by S{\"o}nke Rollenske.

\bibitem[Pir02]{Pir02}
{G. P.} Pirola.
\newblock Surfaces with {$p_g=q=3$}.
\newblock {\em Manuscripta Math.}, 108(2):163--170, 2002.

\bibitem[Pol09]{pol09}
F.~Polizzi.
\newblock Standard isotrivial fibrations with $p_g =q = 1$.
\newblock {\em J. Algebra}, 321(6):1600--1631, 2009.

\bibitem[PP13]{penpol}
M.~Penegini and F.~Polizzi.
\newblock Surfaces with {$p_g=q=2$}, {$K^2=6$}, and {A}lbanese map of degree
  {$2$}.
\newblock {\em Canad. J. Math.}, 65(1):195--221, 2013.

\bibitem[Was97]{washington}
L.~C. Washington.
\newblock {\em Introduction to cyclotomic fields}, volume~83 of {\em Graduate
  Texts in Mathematics}.
\newblock Springer-Verlag, New York, second edition, 1997.

\bibitem[Zuc03]{zucconi}
F.~Zucconi.
\newblock Surfaces with $p_g = q = 2$ and an irrational pencil.
\newblock {\em Canad. J. Math.}, 55(3):649--672, 2003.

\end{thebibliography}


\

\noindent{\bf Authors' Adresses:}\\
{\it Ingrid Bauer, Fabrizio Catanese, Davide Frapporti}\\
Lehrstuhl Mathematik VIII\\
Mathematisches Institut der Universit\"at Bayreuth, NW II\\
Universit\"atsstr. 30; D-95447 Bayreuth, Germany.

\medskip 
\noindent{\bf Email Adresses:}\\
Ingrid.Bauer@uni-bayreuth.de\\
Fabrizio.Catanese@uni-bayreuth.de\\
Davide.Frapporti@uni-bayreuth.de

\newpage

\appendix

\section{Tables}

\begin{table}[!h]
\renewcommand\arraystretch{.9}
\begin{tabular}{c|ccc|ccc|ccc|c}
 & $\epsilon_0$ & $\eta_1$& $\epsilon_1$  & $\eta_0$ & $\eta_1$& $\epsilon_2$ & $\zeta_0$ & $\eta_1$& $\epsilon_3$& $H_1$\\
 \hline

\multirow{3}{*}{$\mathcal S_1$}&1&0&0&1&0&0&1&0&0 &\multirow{3}{*}{	$(\mathbb Z/ 2\mathbb Z)^2\times (\mathbb Z/ 4\mathbb Z)^2$}	  \\
&0&1&0&1&1&0&1&1&0&\\
&0&0&0&0&0&1&1&0&1&\\
\hline
 \multirow{3}{*}{$\mathcal S_2$}&1&0&0&0&0&1&1&0&1&\multirow{3}{*}{$(\mathbb Z/ 2\mathbb Z)^6$}	\\
&0&0&1&0&0&0&1&0&1&\\
&0&0&0&1&0&1&0&0&1&\\

	\hline
\multirow{3}{*}{$\mathcal S_3$}&1&0&0&0&0&1&1&0&1&\multirow{3}{*}{$ (\mathbb Z/ 4\mathbb Z)^3$	}\\
&0&1&0&0&1&0&1&1&0&\\
&0&0&1&1&0&1&1&0&0&\\
\hline
\multirow{3}{*}{$\mathcal S_4$}&1&0&1&0&0&1&1&0&0&\multirow{3}{*}{$(\mathbb Z/ 2\mathbb Z)^2\times (\mathbb Z/ 4\mathbb Z)^2$}	\\
&0&1&0&0&1&0&1&1&0&\\
&0&0&0&1&0&1&1&0&1&\\
\hline

&&&&&&&&&&\\	
\end{tabular}\caption{$q=0$}\label{q0}
\end{table}

\begin{table}[!h]\renewcommand\arraystretch{.9}
\begin{tabular}{c|ccc|ccc|ccc|c|c}
& $\epsilon_0$ & $\eta_1$& $\epsilon_1$  & $\eta_0$ & $\eta_1$& $\epsilon_2$ & $\zeta_0$ & $\eta_1$& $\epsilon_3$& $H_1$& $\pi_1$\\
\hline
\multirow{3}{*}{$\mathcal S_5$}&1&0&1&0&0&0&1&0&1&\multirow{3}{*}{$(\mathbb Z/ 2\mathbb Z)^3\times \mathbb Z^2$}&\\
&0&1&0&0&1&0&1&1&0&&\\
&0&0&0&0&0&1&1&0&1&&\\
		\hline
\multirow{3}{*}{$\mathcal S_{6}$}&0&1&0&1&1&0&1&1&0&\multirow{3}{*}{$(\mathbb Z/ 2\mathbb Z)^2\times \mathbb Z^2$}&\\
&0&0&1&1&0&0&1&0&1&&\\
&0&0&0&0&0&1&1&0&1	&&\\
	\hline
	\multirow{3}{*}{$\mathcal S_{7}$}&1&0&0&0&0&1&1&0&1&\multirow{3}{*}{$(\mathbb Z/ 4\mathbb Z)\times \mathbb Z^2$}&\\
&0&1&0&0&1&0&1&1&0&&\\
&0&0&1&1&0&1&0&0&0	&&\\

	\hline
\multirow{3}{*}{$\mathcal S_{8}$}&1&0&0&0&0&1&1&0&1&\multirow{3}{*}{$(\mathbb Z/ 2\mathbb Z)^2\times \mathbb Z^2$}&\\
&0&1&0&1&1&0&1&1&0&&\\
&0&0&1&1&0&0&1&0&1&&	\\
\hline
\multirow{3}{*}{$\mathcal S_{9}$}&1&0&0&1&0&1&1&0&1&\multirow{3}{*}{$(\mathbb Z/ 2\mathbb Z\times \mathbb Z/ 4\mathbb Z)\times \mathbb Z^2$}&\\
&0&1&0&0&1&0&1&1&0&&\\
&0&0&1&1&0&1&0&0&0&&\\
\hline
\multirow{3}{*}{$\mathcal S_{10}$}&1&0&1&1&0&0&1&0&1&\multirow{3}{*}{$(\mathbb Z/ 2\mathbb Z)^2\times \mathbb Z^2$}&\\
&0&1&0&1&1&0&1&1&0&&\\
&0&0&0&0&0&1&1&0&1&&\\
	\hline
	\multirow{3}{*}{$\mathcal S_{11}$}&1&0&0&1&0&1&0&0&1&\multirow{3}{*}{$(\mathbb Z/ 2\mathbb Z)^3\times \mathbb Z^2$}&\\
&0&1&0&1&1&0&0&1&0&&\\
&0&0&1&1&0&1&1&0&0&&\\	
\hline
\multirow{3}{*}{$\mathcal S_{12}$}&1&0&1&0&0&0&1&0&1&\multirow{3}{*}{$(\mathbb Z/ 2\mathbb Z)^3\times \mathbb Z^2$}&\multirow{3}{*}{$\cong\pi_1(S_{11})$}\\
&0&1&0&0&1&0&1&1&0&&\\
&0&0&0&1&0&1&1&0&1&&\\

	\hline
&&&&&&&&&&&\\	

\end{tabular}\caption{$q=1$}\label{q1}
\end{table}

\begin{table}[!h]\renewcommand\arraystretch{.9}
\begin{tabular}{c|ccc|ccc|ccc|c|c}
& $\epsilon_0$ & $\eta_1$& $\epsilon_1$  & $\eta_0$ & $\eta_1$& $\epsilon_2$ & $\zeta_0$ & $\eta_1$& $\epsilon_3$& $H_1$& $\pi_1$\\
\hline

\multirow{3}{*}{$\mathcal S_{13}$}&1&0&0&1&0&1&1&0&1&\multirow{3}{*}{$ \mathbb Z^4$}&\\
&0&1&0&1&1&0&1&1&0&&\\
&0&0&1&1&0&1&0&0&0&&\\

\hline

\multirow{3}{*}{$\mathcal S_{14}$}&1&0&1&0&0&0&0&0&1&\multirow{3}{*}{$(\mathbb Z/ 2\mathbb Z)\times \mathbb Z^4$}&\\
&0&1&1&0&1&1&0&1&0&&\\
&0&0&0&1&0&1&0&0&1&&\\
\hline

\multirow{3}{*}{$\mathcal S_{15}$}&1&0&1&0&0&0&1&0&1& \multirow{3}{*}{$(\mathbb Z/ 2\mathbb Z)\times \mathbb Z^4$}&\multirow{3}{*}{$\cong\pi_1(S_{14})$}\\
&0&1&1&0&1&1&0&1&0&&\\
&0&0&0&1&0&1&1&0&1&&\\

	\hline
&&&&&&&&&&&\\	

\end{tabular}\caption{$q=2$}\label{q2}
\end{table}

\begin{table}[!h]\renewcommand\arraystretch{.9}
\begin{tabular}{c|ccc|ccc|ccc|c}
& $\epsilon_0$ & $\eta_1$& $\epsilon_1$  & $\eta_0$ & $\eta_1$& $\epsilon_2$ & $\zeta_0$ & $\eta_1$& $\epsilon_3$& $\pi_1$\\
\hline
\multirow{3}{*}{$\mathcal S_{16}$}&1&0&1&0&0&0&1&0&1& \multirow{3}{*}{$\mathbb Z^6$}\\
&0&1&1&0&1&1&1&1&0&\\
&0&0&0&1&0&1&1&0&1&\\

	\hline
&&&&&&&&&&\\	

\end{tabular}\caption{$q=3$}\label{q3}
\end{table}

\end{document}